\numberwithin{equation}{section}
\newtheorem{theorem}{Theorem}[section]
\newtheorem{lemma}[theorem]{Lemma}
\newtheorem{prop}[theorem]{Proposition}
\newtheorem{rem}[theorem]{Remark}
\newtheorem{definition}[theorem]{Definition}
\newtheorem{example}[theorem]{Example}
\newtheorem{remark}[theorem]{Remark}
\newcommand{\SoC}{{\mathcal C}}
\newcommand{\kk}{\mathbf k}
\newcommand{\xx}{\mathbf x}
\newcommand{\KK}{\mathbf K}
\newcommand{\XX}{\mathbf X}
\newcommand{\gep}{\varepsilon}       
\newcommand{\cM}{{\ensuremath{\mathcal M}} }
\newcommand{\cX}{{\ensuremath{\mathcal X}} }
\newcommand{\cZ}{{\ensuremath{\mathcal Z}} }
\newcommand{\E}{{\ensuremath{\mathbb E}} }
\newcommand{\N}{{\ensuremath{\mathbb N}} }
\newcommand{\bbP}{{\ensuremath{\mathbb P}} }
\newcommand{\R}{{\ensuremath{\mathbb R}} }
\newcommand{\z}{{\ensuremath{\mathcal Z}} }
\newfont{\indic}{bbmss12}
\def\un#1{\hbox{{\indic 1}$_{#1}$}}
\newcommand\MI{\operatorname{MI}}
\newcommand\I{\mathcal I}
\renewcommand\H{\operatorname{H}}
\newcommand\eps{\epsilon}
\newcommand\EX{\operatorname{EX}}
\begin{document}

\title[Neural Complexity]{A probabilistic study of neural complexity}

\author{J. Buzzi}

\address{Laboratoire de Math\'ematique d'Orsay - C.N.R.S.  (U.M.R. 8628) \& Universit\'e Paris-Sud
\\
Universit\'e Paris-Sud, F-91405 Orsay Cedex, France}
\email{jerome.buzzi\@@math.u-psud.fr}

\author{L. Zambotti}

\address{Laboratoire de Probabilit{\'e}s et Mod\`eles Al\'eatoires (CNRS U.M.R. 7599) and  Universit{\'e} Paris 6
-- Pierre et Marie Curie, U.F.R. Mathematiques, Case 188, 4 place
Jussieu, 75252 Paris cedex 05, France }
\email{lorenzo.zambotti\@@upmc.fr}

\begin{abstract}
G. Edelman, O. Sporns, and G. Tononi have introduced 
 the \emph{neural complexity} of a family of random variables,
defining it as a specific average of mutual information
over subfamilies. We show that their choice of weights satisfies
two natural properties, namely exchangeability and additivity, and
we call any functional satisfying these two properties an
\emph{intricacy}. We classify all intricacies in terms of probability laws on the unit
interval and study the growth rate of maximal intricacies when the
size of the system goes to infinity.
For systems of a fixed size, we show that maximizers have small support
and exchangeable systems have small intricacy. In particular,
maximizing intricacy leads to spontaneous symmetry breaking
and failure of uniqueness.
\end{abstract}

\keywords{Entropy; Mutual information; Complexity; Discrete Probability; Exchangeable vectors}

\subjclass[2000]{94A17, 92B30, 60C05}

\maketitle


\section{Introduction}

\subsection{A functional over random systems}
Natural sciences have to deal with "complex systems" in some obvious and not
so obvious meanings. Such notions first appeared in thermodynamics. Entropy 
is now recognized as the fundamental measure of complexity in the sense of 
randomness and it is playing a key role as well in information theory, probability
and dynamics \cite{Entropy}. Much more recently, subtler forms of complexity have been considered 
in various physical problems \cite{Bak95,Bennett90,Crutchfield,Goldenfeld99},
though there does not seem to be a single satisfactory measure yet.

Related questions also arise in biology. In their study of high-level neural networks,
  G. Edelman, O. Sporns and G. Tononi have
argued that the relevant complexity
should be a combination of high {\it integration} and high {\it
differentiation}. In \cite{Tononi94} they have introduced a
quantitative measure of this kind of complexity under the name of
{\it neural complexity}. As we shall see, this concept is
strikingly general and has interesting mathematical properties.

In the biological \cite{Edelman01,Holthausen99,
Krichmar04,Seth07,Shanahan08,Sporns00,Sporns02,Sporns07,Tononi96,
Tononi99} and physical \cite{Barnett09,DeLuca04} literature, several authors
have used numerical experiments based on Gaussian approximations and simple
examples to suggest that high values of this  neural complexity
are indeed associated with non-trivial organization of the network,
away both from complete disorder (maximal entropy and independence of the neurons)
and complete order (zero entropy, i.e., complete determinacy).

The aim of this paper is to provide a mathematical foundation
for the Edelman-Sporns-Tononi complexity.
Indeed, it turns out to belong to a natural class
of functionals: the \emph{averages of mutual informations satisfying
exchangeability and  weak-additivity} (see below and the Appendix
for the needed facts of information theory). The former property
means that the functional is invariant under permutations of
the system. The latter that it is additive over independent
systems. We call these functionals {\it intricacies} and give a
unified probabilistic representation of them.

One of the main thrusts of the above-mentioned work is to understand
how systems with large neural complexity look like. From a mathematical
point of view, this translates into the study of the maximization of such
functionals (under appropriate constraints).

This maximization problem is interesting because of the trade-off between
high entropy and strong dependence which are both required for large
mutual information.  Such \emph{frustration} occurs in spin glass
theory \cite{Tala} and leads to asymmetric and non-unique maximizers.
However, contrarily to that problem, our functional is completely
deterministic and the symmetry breaking (in the language of theoretical
physics) occurs in the maximization itself: we show that the maximizers are
not exchangeable although the functional is.
We also estimate the
growth of the maximal intricacy of finite systems with size
going to infinity and the size of the support of maximizers.


The computation of the exact growth rate of the intricacy as
a function of the size and the analysis of systems with almost
maximal intricacies build on the techniques of this paper,
especially the probabilistic representation below, but require
additional ideas, so are deferred to another paper \cite{BZ2}.

\subsection{Intricacy} We recall that the {\it entropy} of a random variable
$X$ taking values in a finite or countable space $E$ is defined by
\[
\H(X) := -\sum_{x\in E} P_X(x) \, \log(P_X(x)), \qquad
P_X(x):=\bbP(X=x).
\]
Given two discrete random variables defined over the same probability space,
the {\it mutual information} between $X$ and $Y$ is
\[
\MI(X,Y) := \H(X)+\H(Y)-\H(X,Y).
\]
We refer to the appendix for a review of the main properties of the
entropy and the mutual information and to \cite{InfoTheory}
and \cite{Entropy} for introductions to information theory and to
the various roles of entropy in mathematical physics, respectively.
For now, it suffices to recall that
$\MI(X,Y)\geq 0$ is equal to zero if and only if $X$ and $Y$ are independent,
and therefore $\MI(X,Y)$ is a measure of the dependence between $X$ and $Y$.

Edelman, Sporns and Tononi \cite{Tononi94}
consider systems formed by a finite family
$X=(X_i)_{i\in I}$ of random variables and define the following concept of complexity. For any
$S\subset I$, they divide the system in two families
\[
X_S:=(X_i, i\in S), \qquad X_{S^c}:=(X_i, i\in S^c),
\]
where $S^c:=I\backslash S$. Then they compute the mutual informations
$\MI(X_S,X_{S^c})$ and consider an average of these:
\begin{equation}\label{est}
\I(X):=\frac1{|I|+1} \sum_{S\subset I} \frac1{\binom{|I|}{|S|}} \,
\MI(X_S,X_{S^c}),
\end{equation}
where $|I|$ denotes the cardinality of $I$ and $\binom{n}{k}$ is the
binomial coefficient. Note that $\I(X)$ is really a function of
the \emph{law} of $X$ and not of its random values.

The above formula can be read as the expectation of the mutual information
between a random subsystem $X_S$ and its complement $X_{S^c}$
where one chooses uniformly the size $k\in\{0,\ldots,|I|\}$ and then
a subset $S\subset I$ of size $|S|=k$.

In this paper we prove that $\I$ fits into a natural
class of functionals, which we call {\bf intricacies}.
We shall see that these functionals have very
similar, though not identical properties and admit a natural and technically
very useful probabilistic representation by means of a probability measure on $[0,1]$.

Notice that $\I\geq 0$ and $\I=0$ if and only if the system
is an independent family (see Lemma \ref{bert2} below).
In particular, both complete order (a deterministic family $X$) and
total disorder  (an independent family) imply
that every mutual information vanishes and therefore $\I(X)=0$.

On the other hand, to make \eqref{est} large, $X$ must simultaneously display
two different behaviors: a non-trivial correlation between its subsytems and
a large number of internal degrees of freedom. This is the hallmark of
complexity according to Edelman, Sporns and Tononi. The need to
strike a balance between {\it local independence} and {\it global dependence}
makes such systems not so easy to build (see however Example \ref{ex:smallN} and
Remark \ref{diffint} below for a simple case). This is the main point of our
work.

\subsection{Intricacies}
Throughout this paper, a {\bf system} is a finite collection $(X_i)_{i\in I}$ of random variables, each $X_i$, $i\in I$,
taking value in the same finite set, say $\{0,\dots,d-1\}$ with $d\geq 2$ given.
Without loss of generality, we suppose that $I$ is a subset of the positive
integers or simply $\{1,\dots,N\}$. In this case it is convenient to write
$N$ for $I$.

We let $\cX(d,I)$ be the set of such systems and $\cM(d,I)$
the set of the corresponding laws, that is, all probability measures on
$\{0,\dots,d-1\}^I$ for any finite subset $I$.
We often identify it with $\cM(d,N):=\cM(d,\{1,\dots,N\})$ for $N=|I|$.
If $X$ is such a system with law $\mu$, we denote its entropy by $\H(X)=\H(\mu)$.
Of course, entropy is in fact a (deterministic) function of the
law $\mu$ of $X$ and not of the (random) values of $X$.

Intricacies are functionals over such systems (more precisely: over their laws)
formalizing and generalizing the neural complexity \eqref{est} of
Edelman-Sporns-Tononi \cite{Tononi94}:

\begin{definition}\label{def1}
A {\bf system of coefficients} is a family of numbers
\[
c:=(c_S^I:\, I\subset\subset\N^*, \, S\subset I)
\]
satisfying, for all $I$ and all $S\subset I$:
 \begin{equation}\label{eq:cond-CIS}
     c_S^I\geq 0, \quad \sum_{S\subset I} c_S^I = 1, \quad \text{ and } c^I_{S^c}=c^I_S
 \end{equation}
where $S^c:=I\setminus S$. We denote the set of such systems by $\SoC(\N^*)$.

The corresponding {\bf mutual information functional} is $\I^c:\cX\to\mathbb R$ defined by:
 $$
    \I^c(X):=\sum_{S\subset I} c^I_{S} \MI\left(X_S,X_{S^c}\right).
 $$
By convention,
$\MI\left(X_\emptyset,X_I\right)=\MI\left(X_I,X_\emptyset\right)=0$.
If $X\in\cX(d,I)$ has law $\mu$, we denote $\I^c(X)=\I^c(\mu)$.
$\I^c$ is {\rm \bf non-null} if  some coefficient $c^I_S$ with $S\notin\{\emptyset, I\}$ is not zero.

An {\bf intricacy} is a mutual information functional satisfying:
 \begin{enumerate}
  \item {\bf exchangeability} (invariance by permutations): if $I,J\subset\subset\N^*$
  and $\phi:I\to J$ is a bijection, then $\I^c(X)=\I^c(Y)$ for any $X:=(X_i)_{i\in I}$, $Y:=(X_{\phi^{-1}(j)})_{j\in J}$;
  \item {\bf weak additivity}:  $\I^c(X,Y)=\I^c(X)+\I^c(Y)$ for any two \emph{independent} systems $(X_i)_{i\in I},(Y_j)_{j\in J}$.
 \end{enumerate}

\end{definition}
Clearly, by \eqref{est}, neural complexity is a
mutual information functional with
$c^I_{S}=\frac1{|I|+1} \frac1{\binom{|I|}{|S|}}$,
satisfying exchangeability. Weak additivity is less trivial
and will be deduced in Theorem \ref{main1} below.
We remark that the factor $(|I|+1)$ in the denominator
is not present in the original definition in \cite{Tononi94}
but is necessary for weak additivity and the normalization
\eqref{eq:cond-CIS} to hold.



\subsection{Main results}

Our first result is a characterization of systems of coefficients $c$
generating an intricacy, i.e. an exchangeable and weak additive mutual
information functional. These properties are equivalent to a probabilistic
representation of $c$.

We say that a probability measure $\lambda$ on $[0,1]$ is {\bf symmetric} if $\int_{[0,1]} f(x)\, \lambda(dx)=
\int_{[0,1]} f(1-x) \lambda(dx)$ for all measurable and bounded functions $f$.
\begin{theorem}\label{main1}
Let $c\in\SoC(\N^*)$ be a system of coefficients and $\I^c$ the associated mutual information functional.
\begin{enumerate}
\item $\I^c$ is
an intricacy, i.e. exchangeable and weakly additive, if and only if there exists a symmetric probability measure
$\lambda_c$ on $[0,1]$ 
such that
\begin{equation}\label{exploo0}
c^I_S = \int_{[0,1]} x^{|S|}(1-x)^{|I|-|S|}\, \lambda_c(dx), \qquad
\forall \ S\subseteq I.
\end{equation}
In this case, if $\{W_c,Y_i, i\in\N^*\}$ is an independent family such that $W_c$ has law $\lambda_ c$
and $Y_i$ is uniform on $[0,1]$, then
\[
c^I_S = 
\bbP(\z\cap I=S), \qquad
\forall \, I\subset\subset\N^*, \ \forall \, S\subset I,
\]
where $\z$ is the random subset of $\N^*$
\[
\z := \{i\in\N^*: Y_i\geq W_c\}.
\]
\item $\lambda_c$ is uniquely determined by $\I^c$. Moreover $\I^c$ is non-null
iff $\lambda_c(]0,1[)>0$ and in this case $c^I_S>0$ for all coefficients with $S\subset I$,
$S\notin\{\emptyset,I\}$.
\item For the neural complexity \eqref{est}, we have
\[
\frac1{|I|+1} \frac1{\binom{|I|}{|S|}} =
\int_{[0,1]} x^{|S|}(1-x)^{|I|-|S|}\, dx, \qquad
\forall \ S\subseteq I,
\]
i.e., $\lambda_c$ in this case is the Lebesgue measure on $[0,1]$
and the neural complexity is indeed exchangeable and weakly additive, i.e.
an intricacy.
\end{enumerate}
\end{theorem}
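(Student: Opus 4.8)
The plan is to translate both directions of the equivalence in (1) into statements about a moment sequence and then invoke the Hausdorff moment problem. The engine of the whole argument, used several times, is the following linear independence fact: if $(a_S)_{S\subseteq I}$ satisfies $a_{S^c}=a_S$ for all $S\subseteq I$, $\sum_{S\subseteq I}a_S=0$, and $\sum_{S\subseteq I}a_S\,\MI(X_S,X_{S^c})=0$ for every law on $\{0,\dots,d-1\}^I$, then $a_S=0$ for all $S$. I would prove it by testing against the laws $\mu_A$, $A\subseteq I$, in which all coordinates with index in $A$ equal a single uniform variable on $\{0,\dots,d-1\}$ and the remaining coordinates are constant: for $\mu_A$ one has $\MI(X_S,X_{S^c})=(\log d)\,\mathbf 1[\emptyset\subsetneq S\cap A\subsetneq A]$, so evaluating $\sum_S a_S\MI(X_S,X_{S^c})$ at $\mu_A$ and using $a_{S^c}=a_S$ gives $\sum_{S\subseteq I\setminus A}a_S=0$ whenever $|A|\ge 2$; the cases $|I\setminus A|\in\{|I|-1,|I|\}$ follow from the symmetry of $a$ and from $\sum_S a_S=0$, so $\sum_{S\subseteq B}a_S=0$ for all $B\subseteq I$, and Möbius inversion on the Boolean lattice forces $a\equiv 0$.

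For the ``if'' direction of (1), given a symmetric probability measure $\lambda$ I would set $c^I_S:=\int_{[0,1]}x^{|S|}(1-x)^{|I|-|S|}\,\lambda(dx)$. The binomial theorem gives $\sum_{S\subseteq I}c^I_S=\int_{[0,1]}\bigl(x+(1-x)\bigr)^{|I|}\lambda(dx)=1$, and the substitution $x\mapsto 1-x$ with symmetry of $\lambda$ gives $c^I_{S^c}=c^I_S$, so $c$ is a system of coefficients. Since $c^I_S$ depends only on $(|I|,|S|)$, exchangeability of $\I^c$ is immediate after relabeling. Weak additivity follows from the entropy identity $\MI\bigl((X_S,Y_R),(X_{S^c},Y_{R^c})\bigr)=\MI(X_S,X_{S^c})+\MI(Y_R,Y_{R^c})$ for independent systems $X$ on $I$ and $Y$ on $J$ (a consequence of additivity of entropy over independent families, recalled in the Appendix), combined with the collapse $\sum_{R\subseteq J}c^{I\cup J}_{S\cup R}=c^I_S$, again from the binomial theorem. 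For the probabilistic description I would condition on $W_c=x$ and use independence of the uniforms $Y_i$ to get $\bbP(\z\cap I=S)=\int_{[0,1]}(1-x)^{|S|}x^{|I|-|S|}\lambda_c(dx)$, which equals $c^I_S$ precisely because $\lambda_c$ is symmetric.

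For the ``only if'' direction, exchangeability applied to permutations of $I$ and to bijections between index sets of equal size, together with the linear independence fact, forces $c^I_S=p^{|I|}_{|S|}$ to depend only on the two cardinalities. Applying weak additivity to $X$ on $I$ and a single independent deterministic coordinate (which does not affect mutual informations) and invoking linear independence again yields the recursion $p^{n+1}_k+p^{n+1}_{k+1}=p^n_k$ for $0\le k\le n$. Setting $M_n:=p^n_0$ (with $M_0=1$), an induction on $k$ gives $p^n_k=(-1)^k(\Delta^k M)_{n-k}$, where $(\Delta M)_m:=M_{m+1}-M_m$, so the constraints $p^n_k\ge 0$ are exactly complete monotonicity of $(M_n)_{n\ge0}$; Hausdorff's theorem then furnishes a unique probability measure $\lambda$ on $[0,1]$ with $M_n=\int_{[0,1]}x^n\lambda(dx)$, whence $p^n_k=\int_{[0,1]}x^{n-k}(1-x)^k\lambda(dx)$. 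The symmetry $p^n_k=p^n_{n-k}$ shows $\lambda$ and its image under $x\mapsto 1-x$ have the same moments, hence coincide by determinacy of the moment problem on $[0,1]$, so $\lambda$ is symmetric and the formula becomes $c^I_S=\int x^{|S|}(1-x)^{|I|-|S|}\lambda(dx)$. I expect the genuinely delicate point to be this passage from the two \emph{functional} identities to \emph{pointwise} identities among the $c^I_S$, i.e. the linear independence fact and the bookkeeping that distills the clean recursion; the moment-problem step is then routine.

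Part (2) follows since $\I^c$ determines all coefficients $c^I_S$ by the linear independence fact (applied to the difference of two representing systems of coefficients), and the moments $c^I_I=\int_{[0,1]}x^{|I|}\lambda_c(dx)$ then determine $\lambda_c$ by determinacy of the moment problem; moreover, for $1\le|S|\le|I|-1$ the integrand $x^{|S|}(1-x)^{|I|-|S|}$ is strictly positive on $]0,1[$ and vanishes at the endpoints, so $c^I_S=\int_{]0,1[}x^{|S|}(1-x)^{|I|-|S|}\lambda_c(dx)$ is positive if $\lambda_c(]0,1[)>0$ and is zero for all such $S$ otherwise, giving the stated equivalence and positivity. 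Part (3) is the Beta-integral computation $\int_0^1 x^k(1-x)^{n-k}\,dx=\frac{k!\,(n-k)!}{(n+1)!}=\frac1{(n+1)\binom nk}$, which identifies $\lambda_c$ with Lebesgue measure on $[0,1]$; as the latter is symmetric, part (1) applies and the Edelman--Sporns--Tononi neural complexity is an intricacy.
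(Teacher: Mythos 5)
Your proposal is correct and follows essentially the same route as the paper: it determines the coefficients from the functional by testing against synchronized-subsystem laws (the paper's Lemma~\ref{lem:uniq-coef} and Example~\ref{gb4}, which you repackage as a linear-independence statement resolved by M\"obius inversion rather than the paper's induction on $|S|$), reduces weak additivity to the one-step recursion $c^{N+1}_k+c^{N+1}_{k+1}=c^N_k$, and then invokes the Hausdorff moment theorem and the Beta integral exactly as in Lemmas~\ref{1916} and~\ref{1012}. The only differences are cosmetic (empty-set versus full-set coefficients as the moment sequence, and deriving the $|J|=1$ case of projectivity directly instead of the general identity \eqref{mainc}), and your explicit use of the symmetry of $\lambda_c$ in the paintbox computation is if anything slightly more careful than the paper's.
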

We discuss other explicit examples in section 2 below.

\medskip
Our next result concerns the maximal value of intricacies.
As discussed above, this is a subtle issue since large intricacy values require
compromises. This can also be seen in that intricacies are differences between entropies,
see \eqref{eq:I-as-H} and therefore not concave.

The weak additivity of intricacies is the key to how they grow with
the size of the system. This property of neural complexity having
been brought to the fore, we obtain linear growth and convergence of
the growth speed quite easily. The same holds subject to an entropy
condition, independently of the softness of the constraint (measured
below by the speed at which $\delta_N$ converges to $0$).

Denote by $\I^c(d,N)$ and $\I^c(d,N,x)$, $x\in[0,1]$, the supremum
of $\I^c(X)$ over all $X\in\cX(d,N)$, respectively over all
$X\in\cX(d,N)$ such that $\H(x)=xN\log d$:
\begin{equation}\label{supac0}
\I^c(d,N) := \sup\{\I^c(\mu):\mu\in\cM(d,N)\},
\end{equation}
\begin{equation}\label{supac1}
\I^c(d,N,x) := \sup\{\I^c(\mu): \mu\in\cM(d,N),\, \H(\mu)=xN\log
d\}.
\end{equation}
Notice that if $x=0$ or $x=1$, then $\I^c(d,N,x)=0$, since this
corresponds to, respectively, deterministic or independent systems,
for which all mutual information functionals vanish.
\begin{theorem}\label{thm:main}
Let $\I^c$ be a non-null intricacy and let $d\geq 2$ be some integer.
\begin{enumerate}
\item The following limits exist for all $x\in[0,1]$
\begin{equation}\label{omer0}
   \I^c(d):=\lim_{n\to\infty} \frac{\I^c(d,N)}N,
   \qquad    \I^c(d,x):=\lim_{n\to\infty} \frac{\I^c(d,N,x)}N,
\end{equation}
and we have the bounds
\begin{equation}\label{omer00}
\left[x\wedge(1-x)\right] \kappa_c\, \leq
\frac{\I^c(d,x)}{\log d} \leq
\frac{\I^c(d)}{\log d}\leq \frac{1}2,
\end{equation}
where
\begin{equation}\label{ppa}
\kappa_c:=2\int_{[0,1]} y(1-y)\, \lambda_c(dy)>0,
\end{equation}
and $\lambda_c$ is defined in Theorem \ref{main1}.
\item Let $(\delta_N)_{N\geq1}$ be any sequence of non-negative numbers
converging to zero and $x\in[0,1]$. Then
 $$
  \I^c(d,x)= \lim_{N\to\infty}
            \frac1{N} \, \sup \left\{ \I^c(X): X\in\cX(d,N), \
            \left|\frac{\H(X)}{N\log d} - x\right| \leq \delta_N
            \right\}.
 $$
\end{enumerate}
\end{theorem}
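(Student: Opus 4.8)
\emph{Strategy and part (1).} The engine is weak additivity. If $X\in\cX(d,N)$ and $Y\in\cX(d,M)$ are taken independent, then $(X,Y)\in\cX(d,N+M)$ and weak additivity gives $\I^c(X,Y)=\I^c(X)+\I^c(Y)$; applying this to near-maximizers shows $\I^c(d,N+M)\ge\I^c(d,N)+\I^c(d,M)$. The same works for the constrained quantities, because an independent concatenation of two systems of entropy ratio $x$ again has entropy ratio $x$ (and some law realizes $\H=xN\log d$ exactly, by continuity of entropy). Both sequences are thus superadditive, and they are bounded by $\tfrac N2\log d$ since $\MI(X_S,X_{S^c})\le\min(\H(X_S),\H(X_{S^c}))\le\tfrac12(\H(X_S)+\H(X_{S^c}))\le\tfrac N2\log d$, whence $\I^c(X)\le\tfrac N2\log d\sum_{S}c^I_S=\tfrac N2\log d$. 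Fekete's lemma then produces the limits \eqref{omer0}, together with $\I^c(d,x)/\log d\le\I^c(d)/\log d=\sup_N\I^c(d,N)/(N\log d)\le\tfrac12$, and also the inequality $\I^c(d,N,x)\le N\,\I^c(d,x)$ valid for every $N$, which I reuse in part (2). Finally $\kappa_c>0$: being non-null, $\lambda_c(]0,1[)>0$ by Theorem \ref{main1}(2), while $y(1-y)>0$ on $]0,1[$.

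\emph{Lower bound in part (1).} Fix $x\le\tfrac12$ (for $x>\tfrac12$ replace $x$ by $1-x$ and use independent uniform coordinates where below I use deterministic ones). Put $m=\lfloor xN\rfloor$ and consider the system on $\{1,\dots,N\}$ built from $m$ independent ``twin'' pairs — within each pair the two coordinates coincide and are uniform on $\{0,\dots,d-1\}$ — plus $N-2m$ further independent coordinates, all deterministic except at most one, whose entropy is tuned so that $\H(X)=xN\log d$ exactly. Since $\H$ and $\MI$ are additive over independent blocks, $\MI(X_S,X_{S^c})=(\log d)\cdot\#\{\text{pairs split by }S\}$ for every $S$, so $\I^c(X)=(\log d)\sum_{\text{pairs}}\bbP(\z\text{ meets the pair in exactly one point})$ with $\z$ the random set of Theorem \ref{main1}; conditioning on $W_c$ that probability equals $2\int_{[0,1]}y(1-y)\,\lambda_c(dy)=\kappa_c$. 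Hence $\I^c(X)=m\kappa_c\log d\ge(xN-1)\kappa_c\log d$; dividing by $N\log d$ and letting $N\to\infty$ gives $\I^c(d,x)/\log d\ge x\kappa_c=[x\wedge(1-x)]\kappa_c$, completing \eqref{omer00}.

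\emph{Part (2).} Write $J_N$ for the $N$-th term on the right-hand side. The exact-constraint set sits inside the relaxed one, so $J_N\ge\I^c(d,N,x)/N$ and $\liminf_N J_N\ge\I^c(d,x)$. For the converse the endpoints are immediate: if $x=0$ then $\I^c(X)\le\H(X)\le\delta_N N\log d$, and if $x=1$ then $\I^c(X)\le N\log d-\H(X)\le\delta_N N\log d$, using $\MI(X_S,X_{S^c})\le\H(X)$ and $\MI(X_S,X_{S^c})\le N\log d-\H(X)$ respectively, so $J_N\to 0=\I^c(d,x)$. For $x\in(0,1)$ and $X\in\cX(d,N)$ of entropy ratio $y$ with $|y-x|\le\delta_N$, append to $X$ a family $Z$ of mutually independent coordinates, also independent of $X$: uniform ones if $y<x$, deterministic ones if $y>x$, and at most one extra coordinate of a suitably chosen fractional entropy, so that the enlarged system $\tilde X\in\cX(d,\tilde N)$ has entropy ratio \emph{exactly} $x$ with $\tilde N=N+O\big(\delta_N N/(x\wedge(1-x))\big)+O(1)$. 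Weak additivity gives $\I^c(\tilde X)=\I^c(X)+\I^c(Z)=\I^c(X)$, since $\I^c$ vanishes on mutually independent families (iterating weak additivity and using $\I^c$ of a single variable $=0$). Therefore $\I^c(X)=\I^c(\tilde X)\le\I^c(d,\tilde N,x)\le\tilde N\,\I^c(d,x)$, so $\I^c(X)/N\le(\tilde N/N)\,\I^c(d,x)$; taking the supremum over admissible $X$ and then $N\to\infty$, where $\tilde N/N\to 1$ uniformly, yields $\limsup_N J_N\le\I^c(d,x)$.

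\emph{Main obstacle.} The only genuinely delicate step is the padding construction in part (2): one must land on the entropy ratio $x$ \emph{exactly} while adding only $o(N)$ coordinates and without perturbing $\I^c$. The first two are arranged by discrete book-keeping — deterministic (resp.\ uniform) padding approaches $x$ from above (resp.\ below) to within $O(1/N)$, and a single coordinate of fractional entropy then corrects the residue — and the invariance of $\I^c$ under appending an independent family is exactly weak additivity. Everything else reduces to Fekete's lemma and additivity of $\H$ and $\MI$ over independent blocks.
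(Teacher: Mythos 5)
Your proof is correct. Part (1) follows the paper's route essentially verbatim: superadditivity of $N\mapsto\I^c(d,N)$ and $N\mapsto\I^c(d,N,x)$ via weak additivity of independent concatenations, Fekete's lemma, the bound $\MI(X_S,X_{S^c})\le\frac N2\log d$, and an explicit low-entropy construction for the lower bound. Your twin-pair system is the same object as the paper's (Example \ref{gb3.5} tensorized via weak additivity in Proposition \ref{bubi}); you merely compute its intricacy through the Kingman representation $\I^c(X)=\E(\MI(\z\cap I))$ and the identity $\bbP(\z$ splits a pair$)=\kappa_c$, rather than through $\I^c=2c^2_1\MI(X_1,X_2)$ on each $2$-block — the two computations coincide since $\kappa_c=2c^2_1$.

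Part (2) is where you genuinely diverge. The paper keeps the system size $N$ fixed and adjusts the entropy by a Bernoulli mixture $Y^t:=X\,\un{(U\le 1-t)}+Z\,\un{(U>1-t)}$ with an i.i.d.\ uniform (or constant) system $Z$; this requires Lemma \ref{lem:I-linear} (near-linearity of $\H$ and $\I^c$ under mixtures, with $\pm\log r$ and $\pm 2\log r$ error terms) and an intermediate-value argument to locate $t_0$ with $\H(Y^{t_0})=xN\log d$, then the estimate $|\I^c(Y^{t_0})-\I^c(X)|\le\eps N\log d$. You instead enlarge the system, appending $o(N)$ mutually independent coordinates so that the entropy ratio becomes exactly $x$ at size $\tilde N$ with $\tilde N/N\to1$ uniformly; weak additivity then gives $\I^c(\tilde X)=\I^c(X)$ \emph{exactly}, and the conclusion follows from $\I^c(d,\tilde N,x)\le\tilde N\,\I^c(d,x)$. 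Your route is more elementary — it avoids the mixture lemmas and their error bookkeeping entirely, trading them for integer-rounding of the pad length plus one fractional-entropy coordinate, which you correctly identify as the only delicate step and which does close (taking the floor of the pad length leaves a residue correctable by a single coordinate of entropy in $[0,\log d]$). The cost is that you must treat $x=0,1$ separately (your direct bounds $\I^c(X)\le\H(X)$ and $\I^c(X)\le N\log d-\H(X)$ handle this), whereas the paper's fixed-size perturbation also quietly excludes the endpoints via the $\min\{x-\delta_0,1-x-\delta_0\}$ denominators. Both arguments are sound.
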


\begin{remark}
{\rm $ $

1. By considering a set of independent, identically distributed (i.i.d. for short)
random variables on $\{0,\ldots,d-1\}$, it is easy
to see that for any $0\leq h\leq N\log d$, there is $X\in\cX(d,N)$ such that
$\H(X)=h$ and $\I^c(X)=0$. Hence minimization of intricacies is a trivial
problem also under fixed entropy.

2. It follows that for any $(x,y)$, $0\leq x\leq 1$ such that $0\leq y<
\I^c(d,x)/\log d$,
for any $N$ large enough, there exists $X\in\cX(d,N)$ with $\H(X)=xN\log d$
and $\I^c(X)=yN\log d$. Observe, for instance, that $\I^c$ is continuous
on the contractile space $\cM(d,N)$.

3. In the above theorem, the assumption that each variable $X_i$ takes values
in a set of cardinality $d$ can be relaxed to $\H(X_i)\leq\log d$. It can be shown
that this does not change $\I^c(d)$ or $\I^c(d,x)$.
}
\end{remark}

Thus maximal intricacy grows linearly in the size of the system.
What happens if we restrict to smaller classes of systems, enjoying
particular symmetries?
Since intricacies are exchangeable, their value does not change
if we permute the variables of a system. Therefore it is particularly
natural to consider (finite) exchangeable families.

We denote by $\EX(d,N)$ the set of random variables $X\in\cX(d,N)$ which
are  exchangeable, i.e., for all permutations $\sigma$ of $\{1,\ldots,N\}$,
$X:=(X_1,\dots,X_N)$ and $X_\sigma:=(X_{\sigma(1)},\dots,X_{\sigma(N)})$ have
the same law.

\begin{theorem}\label{thm:exchsta}
Let $\I^c$ be an intricacy.
\begin{enumerate}
\item Exchangeable systems have small intricacies. More precisely
 $$
   \sup_{X\in\EX(d,N)} \I^c(X) =o(N^{2/3+\eps}), \qquad N\to+\infty,
 $$
for any $\eps>0$. In particular
 $$
    \lim_{N\to\infty} \frac1N \max_{X\in\EX(d,N)} \I^c(X) = 0.
 $$
\item For $N$ large enough and fixed $d$, maximizers of
$\cX(d,N)\ni X\mapsto \I^c(X)$ are neither unique nor exchangeable.
%
%
\end{enumerate}
\end{theorem}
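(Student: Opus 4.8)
For part (1), the plan is to combine the linear growth of maximal intricacy over all of $\cX(d,N)$ (Theorem \ref{thm:main}, which gives $\I^c(d,N)\asymp N$ for a non-null intricacy) with an \emph{a priori} sublinear bound on exchangeable systems. The key idea is that an exchangeable law $\mu\in\EX(d,N)$ is, by (a finite version of) de Finetti's theorem, a mixture of i.i.d.\ laws up to a correction controlled by the ratio of $N$ to the de Finetti ``hidden'' parameter; more concretely, I would use the following structural fact: for $X$ exchangeable, the mutual information $\MI(X_S,X_{S^c})$ depends only on $|S|$, and the entropy profile $k\mapsto\H(X_1,\dots,X_k)$ is concave and subadditive. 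Writing $h_k:=\H(X_1,\dots,X_k)$, one has $\MI(X_S,X_{S^c})=h_{|S|}+h_{N-|S|}-h_N$, so that
\[
\I^c(X)=\sum_{k=0}^N \binom{N}{k}c^{[N]}_{[k]}\,\bigl(h_k+h_{N-k}-h_N\bigr),
\]
where $c^{[N]}_{[k]}$ is the common value of $c^I_S$ for $|S|=k$. Now concavity of $k\mapsto h_k$ together with $h_0=0$ forces $h_k+h_{N-k}-h_N\le \min(h_k,h_{N-k})$ and, crucially, a quantitative bound: since $h_k\le k\log d$ and the increments $h_k-h_{k-1}$ are non-increasing, the ``defect'' $h_k+h_{N-k}-h_N$ is small unless the increments drop sharply, which can happen only on a short range of $k$. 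Integrating this against the weights $c^{[N]}_{[k]}$, which by Theorem \ref{main1} are moments $\int_{[0,1]}x^k(1-x)^{N-k}\lambda_c(dx)$ and hence concentrate $k/N$ near the support of $\lambda_c$, yields the $o(N^{2/3+\eps})$ bound. The exponent $2/3$ should come from optimizing the trade-off between the width of the range where the entropy increments can fall and the Gaussian-type concentration of the Bernstein weights $\binom{N}{k}x^k(1-x)^{N-k}$ around $k=xN$, whose fluctuations are of order $\sqrt N$; the precise balance of a $\sqrt N$ window against a $\log$-type entropy defect is what produces the $N^{2/3+\eps}$ rate. The second assertion in (1) is then immediate from the first.

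For part (2), the strategy is a contradiction argument built on parts (1) of this theorem and (1) of Theorem \ref{thm:main}. Suppose that for infinitely many $N$ there is an exchangeable maximizer $X^{(N)}$ of $\I^c$ over $\cX(d,N)$. Then $\I^c(d,N)=\I^c(X^{(N)})\le \sup_{\EX(d,N)}\I^c=o(N)$, contradicting $\I^c(d,N)\sim\I^c(d)\,N$ with $\I^c(d)>0$ (the positivity of $\I^c(d)$ coming from the lower bound in \eqref{omer00}, e.g.\ at $x=1/2$, since $\kappa_c>0$). Hence for all large $N$ no maximizer is exchangeable. Non-uniqueness is then a symmetry-orbit argument: the functional $X\mapsto\I^c(X)$ is invariant under the action of the symmetric group $\mathfrak S_N$ on coordinates (exchangeability of $\I^c$), so if $X^*$ is a maximizer then so is $X^*_\sigma$ for every $\sigma\in\mathfrak S_N$; since $X^*$ is not exchangeable, its orbit under $\mathfrak S_N$ contains at least two distinct laws, giving at least two distinct maximizers.

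The main obstacle is the sublinear estimate in part (1), and specifically getting the exponent down to $2/3+\eps$ rather than merely $o(N)$. The crude bound $\I^c(X)\le N\log d$ holds trivially; improving to $o(N)$ already requires exploiting that a finite exchangeable sequence is nearly i.i.d.\ (whose intricacy is $0$), and pushing to $N^{2/3+\eps}$ requires a careful quantitative de Finetti / entropy-subadditivity estimate tracking both how fast the conditional entropies $\H(X_k\mid X_1,\dots,X_{k-1})$ can decrease and how the coefficient weights are distributed in $k$. I expect the cleanest route is to bound $h_k+h_{N-k}-h_N$ using the monotonicity of increments to write it as a sum of consecutive increment-differences over the interval $[k,N-k]$ (or its complement), then use $\sum_k (h_k-h_{k-1})\le N\log d$ to deduce that only $O(N^{1/3})$-ish values of $k$ can carry a non-negligible defect, and finally absorb the Bernstein-weight concentration. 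Verifying that this optimization genuinely yields $2/3$ and not some worse exponent is the delicate point; everything else in the proof is soft.
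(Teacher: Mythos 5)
Part (2) of your proposal is exactly the paper's argument (permutation orbit of a non-exchangeable maximizer), and your reduction of part (2) to part (1) plus the linear lower bound of Theorem \ref{thm:main} is correct. The problem is part (1).

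The concrete mechanism you propose for part (1) does not work. Your decomposition $\I^c(X)=\sum_k\binom{N}{k}c^N_k\,(h_k+h_{N-k}-h_N)$ is fine, and the increments $u_k=h_k-h_{k-1}=\H(X_k\mid X_1,\dots,X_{k-1})$ are indeed non-increasing for exchangeable $X$. But concavity of $k\mapsto h_k$ does \emph{not} force the defect $h_k+h_{N-k}-h_N$ to be small outside a short range of $k$: writing $v_i=u_i-u_{i+1}\ge0$, one has $h_k+h_{N-k}-h_N=\sum_i m_{i,k}v_i$ with multiplicities $m_{i,k}$ as large as $\min(k,N-k)$, so a single sharp drop of the increments (which is a ``short range'' in your sense, and is all that $\sum_i v_i=u_1-u_N\le\log d$ permits you to control) contaminates \emph{every} $k$. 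Concretely, the concave profile $h_k=\min(k,N/2)\log d$ satisfies all the constraints you invoke ($h_0=0$, $h_k\le k\log d$, non-increasing increments, $h_N\le N\log d$), yet gives $h_k+h_{N-k}-h_N=\min(k,N-k)\log d$ and hence $\I^c(X)\asymp N$; moreover the Bernstein weights for, say, the uniform intricacy concentrate at $k=N/2$, exactly where this defect is maximal, so the weight-concentration step cannot rescue the argument. Such a profile is of course not realizable by an exchangeable system, but ruling it out is precisely the content of the estimate you are trying to prove, and it requires an input beyond concavity/subadditivity (which hold for all stationary sequences, some of which do have intricacy $\asymp N$, cf.\ Example \ref{gb5}).

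The missing ingredient — which you allude to as ``a quantitative de Finetti'' but never supply — is the concentration of the empirical type of a size-$n$ subsample of an exchangeable vector around the rescaled type of the full sample. The paper's proof writes $\H(X_S)/n$ for $|S|=n$ as $\frac1n\sum_{\kk}q_{n,\kk}h(\kk/n)+O(\frac{\log n}{n})$ via Stirling, uses the hypergeometric identity $p_{n,\kk}=\sum_{\KK\ge\kk}p_{N,\KK}\binom{N-n}{\KK-\kk}$ together with strict concavity of the entropy function $h$ on the simplex to get a Gaussian tail $q_{n,\kk}\le N^{\kappa}e^{-2n\|\kk/n-\KK/N\|^2}$, and concludes $|\H(X_S)/n-\H(X)/N|\le CN^{-1/3+\eps}$ uniformly for $n\ge\tilde N:=\lfloor N^{2/3+\eps}\rfloor+1$; i.e.\ exchangeability forces the profile $h_n$ to be nearly linear on $[\tilde N,N]$, which makes the defect telescope away. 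The exponent $2/3$ then comes from balancing the trivial bound $O(\tilde N)$ on the contribution of subsets with $|S|<\tilde N$ against the error $N\cdot O(\delta\log\frac1\delta)$ from a concentration window $\delta$ subject to $\tilde N\delta^2\gg\log N$ — not from $\sqrt N$ fluctuations of the Bernstein weights as you suggest. Without this type-concentration step your proof of part (1), and hence of the whole theorem, is incomplete.
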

By the first assertion, exchangeability of the intricacies is not inherited by
their maximizers. Indeed, exchangeable systems are very far from maximizing, since the maximum
of $\I^c$ over $\EX(d,N)$ is $o(N^p)$ for any $p>2/3$ whereas the maximum
of $\I^c$ over $\cX(d,N)$ is proportional to $N$.
This "spontaneous symmetry breaking"  again suggests the complexity
of the maximizers. We remark that numerical estimates suggest that the intricacy of any $X\in \EX(d,N)$
is in fact bounded by $\operatorname{const}\log N$.

The second assertion of Theorem \ref{thm:exchsta} follows from the first one:
for $N$ sufficiently large, the maximal intricacy is not
attained at an exchangeable law; therefore, by permuting a system with
maximal intricacy we obtain different laws, all with the same maximal
intricacy.

We finally turn to a property of exact maximizers, namely that their support is
concentrated on a small subset of all possible configuration:

\begin{theorem}\label{prop:support}
Let $\I^c$ be a non-null intricacy. let $d\geq2$. For $N$ a large enough integer,
the following holds. For any $X$ maximizing $\I^c$ over $\cX(d,N)$,
law $\mu$ of $X$ has small support, i.e.
 $$
     \#\{\omega\in\Lambda_{d,N}:\mu(\{\omega\})=0\} \geq \operatorname{const} d^N
 $$
for some $\operatorname{const}>0$.
\end{theorem}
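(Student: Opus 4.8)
The plan is to argue by contradiction: suppose $X$ maximizes $\I^c$ over $\cX(d,N)$ but its law $\mu$ has support on all but $o(d^N)$ configurations (equivalently, $\mu$ charges at least $(1-\epsilon_N)d^N$ points for some $\epsilon_N\to 0$). I will show that such a $\mu$ must have entropy $\H(\mu)$ very close to the maximum $N\log d$, which by the entropy-constrained bound \eqref{omer00} in Theorem \ref{thm:main} forces $\I^c(X)$ to be $o(N)$ — contradicting the fact that maximizers achieve $\I^c(d,N)$, which grows linearly (again Theorem \ref{thm:main}(1), using that $\I^c$ is non-null so $\kappa_c>0$ and $\I^c(d,x)>0$ for $x\in(0,1)$).

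The key step is the elementary entropy estimate: if $\mu$ is a probability measure on a set $\Lambda_{d,N}$ of size $d^N$ and $\mu$ is supported on at least $(1-\epsilon)d^N$ points, then $\H(\mu) \ge (1-o(1))\,N\log d$. More precisely, I would combine two facts. First, among measures supported on a set of size $m$, entropy is at most $\log m$, but I need a lower bound, so instead: write $x := \H(\mu)/(N\log d)\in[0,1]$; the number of points with $\mu(\{\omega\})>0$ is at most $d^{N}$ trivially, but I want to say that if entropy is bounded away from the max, the support must be small. The clean tool is: for any $\beta>0$, the set $\{\omega:\mu(\{\omega\})\ge \beta\}$ has cardinality at most $1/\beta$; outside a set of size $d^{\lceil x N\rceil+o(N)}$ the measure is negligible by the asymptotic equipartition / typical-set argument (a Chernoff bound on $-\log\mu(\{\omega\})$). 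Consequently, if $\H(\mu)\le (1-\eta)N\log d$ for fixed $\eta>0$, then $\mu$ is concentrated, up to mass $o(1)$, on a set of size at most $d^{(1-\eta/2)N}$, hence the complement of the support has size at least $d^N - d^{(1-\eta/2)N} - (\text{points of tiny mass})$. This already gives $\#\{\omega:\mu(\{\omega\})=0\}\ge \mathrm{const}\, d^N$ unless $\H(\mu)/(N\log d)\to 1$.

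So the dichotomy is: either $\H(\mu)\le (1-\eta)N\log d$ for some fixed $\eta>0$ along a subsequence — in which case the support is small and we are done — or $\H(\mu)/(N\log d)\to 1$. In the latter case, fix any $x_0\in(0,1)$; since $\I^c(d,N,x)$ is (after dividing by $N$) asymptotically $\I^c(d,x)$ which is continuous and vanishes at $x=1$, and since the entropy-constrained sup over the $\delta_N$-soft constraint converges to the same limit by Theorem \ref{thm:main}(2), a maximizer with entropy fraction tending to $1$ has $\I^c(X)/N \to \I^c(d,1) = 0$; but $\I^c(X) = \I^c(d,N)$ and $\I^c(d,N)/N \to \I^c(d) \ge x_0\wedge(1-x_0)\cdot\kappa_c/\log d \cdot \log d > 0$, a contradiction. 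Hence the second case cannot occur, and every maximizer has entropy fraction bounded away from $1$ (in fact one gets a uniform bound), which yields the support estimate.

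The main obstacle I anticipate is making the ``small entropy $\Rightarrow$ small support'' implication fully quantitative and uniform in $N$: the typical-set bound is standard for i.i.d. sequences but here $\mu$ is an arbitrary measure on $\{0,\dots,d-1\}^N$, so I should instead use the general fact that for a probability vector $(p_j)$ on $M=d^N$ atoms with entropy $\H\le (1-\eta)\log M$, the number of atoms that can carry all but $\epsilon$ of the mass is at least $M^{1-\eta/(1-\epsilon')}$ for appropriate constants — this follows from a Lagrange/convexity argument (entropy is maximized, for a fixed support size $s$, by the uniform distribution on $s$ atoms, giving $\H\le\log s$, hence $s\ge M^{1-\eta}$ if $\H=(1-\eta)\log M$; and one handles the low-mass tail by noting it contributes at most $\epsilon\log M + \log(1/\epsilon)$ to the entropy). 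Combining these bookkeeping inequalities carefully gives $\#\{\omega:\mu(\{\omega\})=0\} \le M - M^{1-\eta} - o(M)$ is \emph{impossible} when $\H/(N\log d)$ is bounded away from $1$; turning this around gives the desired linear-in-$d^N$ lower bound on the zero set. The rest is a routine assembly of the already-proven Theorems \ref{thm:main} and \ref{main1}.
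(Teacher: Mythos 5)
There is a genuine gap, and it is fatal to the whole strategy: the implication ``$\H(\mu)\leq(1-\eta)N\log d$ $\Rightarrow$ $\#\{\omega:\mu(\{\omega\})=0\}\geq\mathrm{const}\,d^N$'' is false. Entropy arguments (typical sets, the bound $\H\leq\log s$ for measures supported on $s$ atoms, Markov on $-\log\mu$) only control the number of configurations carrying \emph{non-negligible} mass; they say nothing about the number of configurations with \emph{exactly zero} mass, which is what the theorem asserts. Concretely, take any $\nu\in\cM(d,N)$ and set $\mu:=(1-\gep)\nu+\gep u$ with $u$ uniform on $\Lambda_{d,N}$: this $\mu$ has \emph{full} support (the zero set is empty) yet its entropy can be made as small as you like by choosing $\nu$ deterministic and $\gep$ small. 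So your ``dichotomy'' does not close: in the low-entropy branch you cannot conclude anything about the zero set, because the ``points of tiny mass'' you set aside may all be strictly positive. (The high-entropy branch of your argument is fine as far as it goes — one does have $\I^c(X)\leq N\log d-\H(X)$, so maximizers have entropy fraction bounded away from $1$ — but that only rules out one regime and never produces forbidden configurations.)

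The paper's proof is of a completely different nature, and necessarily so: distinguishing ``zero mass'' from ``exponentially small mass'' requires a local, variational argument at the maximizer rather than a global counting/entropy argument. The authors compute the Hessian of $\I^c$ on the simplex and use the second-order condition $\varphi''(0)\leq0$ for the perturbation that transfers mass $t$ between two configurations $\omega_0,\omega_1$ lying in the same cylinder $\Omega_\beta$ (all configurations agreeing with a fixed $\beta$ on the last $N-z$ coordinates). If no configuration in $\Omega_\beta$ were forbidden, this condition forces $\sum_{S\subset I}c^I_S/|[\omega]_S\cap\Omega_\beta|>\tfrac14$ for some $\omega\in\Omega_\beta$; but the probabilistic representation of Theorem \ref{main1} identifies the left-hand side with $\E\bigl(d^{-|\cZ^c\cap\{1,\dots,z\}|}\bigr)=\int\bigl(\tfrac{x}{d}+1-x\bigr)^z\,\lambda_c(dx)$, which tends to $0$ as $z\to\infty$ once one reduces to $\lambda_c(\{0\})=0$. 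Hence every one of the $d^{N-z}$ cylinders contains a forbidden configuration. If you want to salvage your write-up, you would need to replace the entropy step entirely by some such perturbation argument; the assembly of Theorems \ref{main1} and \ref{thm:main} alone cannot yield the statement.
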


\subsection{Further questions}
As noted above, the exact computation of the functions $\I^c(d)$ and $\I^c(d,x)$ from
Theorem \ref{thm:main} in terms of their probabilistic representation from Theorem \ref{main1} will be the subject of \cite{BZ2} where
we shall study systems with intricacy close to the maximum.

\smallbreak

Second, to apply intricacy one needs to compute it for systems of interests. It might be possible
to compute  it exactly for some simple physical systems, like the Ising
model. A more ambitious goal would be to consider more complex models, like spin glasses,
to analyze the possible relation between intricacy and frustration \cite{Tala}.

A more general approach would be to get rigorous estimates from numerical ones
(see \cite{Tononi94} for some rough computations). A naive approach results in an exponential
complexity and thus begs the question of more efficient algorithms,
perhaps probabilistic ones. A related question is the design
of statistical estimators for intricacies. These estimators should
be able to decide many-variables correlations, which might
require {\it a priori} assumptions on the systems.

\smallbreak

Third, one would to understand the intricacy from a dynamical point of view: which physically reasonable processes (say with dynamics defined in terms
of local rules) can lead to high intricacy systems and at what speeds?

\smallbreak

Fourthly, one could consider the  natural generalization of intricacies, already proposed in \cite{Tononi94} but not
explored further, is given in terms of general partitions $\pi$ of $I$:
if $\pi=\{S_1,...,S_k\}$ with $\cup_iS_i=I$ and $S_i\cap S_j=\emptyset$
for $i\ne j$, then we can set
\begin{equation}\label{kin1}
\MI(X_\pi):=\H(X_{S_1})+\cdots+\H(X_{S_k})-\H(X), \qquad X\in\cX(d,I),
\end{equation}
and for some non-negative coefficients $(c_\pi)_\pi$
\begin{equation}\label{kin2}
{\mathcal J}^c(X):= \sum_\pi c_\pi \MI(X_\pi).
\end{equation}
Most results of this paper extend to the case where
the coefficients $(c_\pi)_\pi$ have a probabilistic representation in terms of the so-called
Kingman paintbox construction \cite[\S 2.3]{bertoin}, see Remark \ref{kingman} below.

One might also be interested to extend the definition of intricacy to
infinite (e.g., stationary) processes, continuous or structured systems, e.g., taking
into account a connectivity or dependence graph (such constraints have been
considered in numerical experiments performed by several authors
\cite{Barnett09,DeLuca04,Sporns00}).

\smallbreak

Finally, our work leaves out the properties of exact maximizers for a given
size. As of now, we have no description of them except in very special
cases (see Examples \ref{ex:smallN2} and \ref{ex:smallN} below) and we do not know how many
there are, or even if they are always in finite number.
We do not have reasonably efficient ways to determine the maximizers
which we expect to lack a simple description in light of the
lack of symmetry established in Theorem \ref{thm:exchsta}.

\subsection{Organization of the paper}

In Sec. 2, we discuss the definition of intricacies, giving some basic properties
and examples. Sec. 3 proves Theorem \ref{main1}, translating the weak additivity of an intricacy
into a property of its coefficients. As a by-product, we obtain a probabilistic
representation of all intricacies. We check that neural
complexity corresponds to the uniform law on $[0,1]$.
In Sec. 4 we prove Theorem \ref{thm:main} by showing the
existence of the limits $\I^c(d)$, $\I^c(d,x)$. Finally, in Sec. 5
we prove Theorem \ref{thm:exchsta} and, in Sec. 6, Theorem \ref{prop:support}.
An Appendix recalls some basic facts from information theory for the convenience
of the reader and to fix notations.

\section{Intricacies}\label{sec:intricacy}

\subsection{Definition}
We begin by a discussion of the definition \ref{def1} above of intricacies.
As $\MI(X_S,X_{S^c})=\MI(X_{S^c},X_S)$, the symmetry condition
$c^I_{S^c}=c^I_S$ can always be satisfied by replacing $c^I_S$ with
$\frac12(c^I_S+c^I_{S^c})$ without changing the functional. Also
$\sum_{S\subset I} c^I_S=1$ is simply an irrelevant normalization when
studying systems with a given index set $I$.

The following mutual information functionals will be proved to be
intricacies in section \ref{additivity}.

\begin{definition}
The intricacy $\I$ of {\bf Edelman-Sporns-Tononi}  is defined by its
coefficients:
\begin{equation}\label{ET}
    c^I_S = \frac{1}{|I|+1}\frac{1}{\binom{|I|}{|S|}}.
\end{equation}
For $0<p<1$, the {\bf $p$-symmetric intricacy} $\I^p(X)$ is:
 $$
     c^I_S = \frac12\left(p^{|S|}(1-p)^{|I\backslash S|}+(1-p)^{|S|}
\, p^{|I\backslash S|}\right).
 $$
For $p=1/2$, this is the {\bf uniform intricacy} $\I^U(X)$ with:
 $$
     c^I_{S}=2^{-|I|}.
 $$
\end{definition}
It is not obvious that the three above mutual information
functionals are weakly additive, but this will follow easily
from Lemma \ref{1012} below. Proposition \ref{1916} below describes all intricacies.

\begin{rem}
{\rm The coefficients of the Edelman-Sporns-Tononi
intricacy $\I$ ensure that subsystems of all sizes
contribute significantly to the intricacy. This is in sharp contrast to the
$p$-symmetric coefficients for which subsystems of size far from
$pN$ or $(1-p)N$ give a vanishing contribution when $N$ gets large.}
\end{rem}

\begin{rem}
{\rm The global $1/(|I|+1)$ factor in $\I$ is not present in
\cite{Tononi94}, which did not compare systems of different sizes.
However it is required for weak additivity.
}
\end{rem}

\subsection{Basic Properties}
We prove some general and easy properties of intricacies. Recall
that $\cX(d,N)$ is the set of $\Lambda_{d,N}$-valued random
variables, where $\Lambda_{d,N}=\{0,\dots,d-1\}^N$. We identify
it with the standard simplex in $\mathbb R^{d^N}$ in the
obvious way.

%
\begin{lemma}\label{assumed}
Let $\I^c$ be a mutual information functional. For each $d\geq2$ and
$N\geq1$, $\I^c:\cM(d,N)\to \mathbf R$ is continuous. In particular,
the suprema $\I^c(d,N)$ and $\I^c(d,N,x)$, introduced in
\eqref{supac0} and \eqref{supac1}, are achieved.

If $\I^c$ is a non-null intricacy, then it is neither convex nor concave.
\end{lemma}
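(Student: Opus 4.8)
For continuity, the plan is to observe that $\MI(X_S, X_{S^c}) = \H(X_S) + \H(X_{S^c}) - \H(X_I)$, where each term is a composition of the coordinate-projection (marginalization) map $\mu \mapsto \mu_S$ with the entropy functional. Marginalization is linear, hence continuous, and entropy $\H$ is a continuous function on the simplex of probability measures on a fixed finite set (using the convention $0\log 0 = 0$, the map $t \mapsto -t\log t$ is continuous on $[0,1]$, and a finite sum of such terms is continuous). Therefore each $\MI(X_S, X_{S^c})$ is continuous in $\mu \in \cM(d,N)$, and $\I^c(\mu) = \sum_{S\subset I} c^I_S \MI(X_S, X_{S^c})$ is a finite linear combination of continuous functions, hence continuous. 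Since $\cM(d,N)$ is identified with the standard simplex in $\R^{d^N}$, it is compact, so the suprema $\I^c(d,N)$ and $\I^c(d,N,x)$ (the latter over the closed subset $\{\H(\mu) = xN\log d\}$, which is closed by continuity of $\H$ and nonempty for $x\in[0,1]$ by an interpolation argument) are attained.

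For the non-convexity/non-concavity claim, I would exhibit the failure on the smallest nontrivial case, $N = 2$. Since $\I^c$ is a non-null intricacy, Theorem \ref{main1}(2) gives $c^I_S > 0$ for $S \notin \{\emptyset, I\}$; for $|I| = 2$ there is a single relevant coefficient (up to the symmetry $c^I_S = c^I_{S^c}$), so $\I^c(\mu)$ is a positive multiple of $\MI(X_1, X_2)$ on $\cM(d,2)$. It then suffices to show $\MI(X_1, X_2)$ is neither convex nor concave as a function on the simplex of laws on $\{0,\dots,d-1\}^2$. Concavity fails because $\I^c$ vanishes on the (convex) set of product measures yet is strictly positive somewhere: take any dependent $\mu$, write it as a nontrivial mixture of two product measures with the same marginals as each other but different from $\mu$'s — concretely, for $d=2$, the law $\mu_\epsilon$ of two bits equal to $(0,0)$ or $(1,1)$ each with probability $1/2$ is a midpoint of the two deterministic (hence product) laws $\delta_{(0,0)}$ and $\delta_{(1,1)}$, on each of which $\I^c = 0$, while $\I^c(\mu_\epsilon) > 0$; this violates concavity. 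Convexity fails dually: mix $\delta_{(0,0)}$ and $\delta_{(1,1)}$ against a suitable pair so that a midpoint becomes a product measure with strictly positive-intricacy endpoints — e.g. the uniform law on $\{0,1\}^2$ is the midpoint of $\mu_\epsilon$ above and the law $\nu$ concentrated on $(0,1),(1,0)$, both of which have strictly positive intricacy, yet $\I^c$ of the uniform (product) law is $0$, violating convexity. (For $d > 2$ one embeds the $d=2$ examples, using that each $X_i$ may take only two values.)

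The routine parts here are the continuity estimates and the interpolation argument showing $\{\H(\mu) = xN\log d\}$ is nonempty; these are standard and I would dispatch them quickly. The only point requiring a little care is extracting from Theorem \ref{main1}(2) that non-nullity forces the single $N=2$ coefficient to be strictly positive — but that is exactly what the cited statement provides ($\lambda_c(]0,1[) > 0$ implies $c^I_S > 0$ for all $S \notin \{\emptyset, I\}$) — after which the counterexamples above are elementary explicit computations with two Bernoulli variables. I expect no genuine obstacle; the main thing is to phrase the two mixture counterexamples cleanly so that the same picture (intricacy $0$ on a segment with strictly positive interior value, and strictly positive intricacy on endpoints of a segment through a product law) simultaneously kills both convexity and concavity.
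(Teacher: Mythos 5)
Your proposal follows essentially the same route as the paper: continuity of $\I^c$ is immediate from continuity of entropy on the finite simplex, compactness gives attainment of the suprema, and convexity/concavity are refuted by exactly the two mixtures the paper uses (two Dirac masses averaging to a perfectly correlated law, and two correlated laws averaging to a product law), padded by constants to handle general $N$.

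One correction is needed: you have interchanged which example refutes which property. The midpoint of $\delta_{(0,0)}$ and $\delta_{(1,1)}$ has intricacy strictly \emph{greater} than the average $0$ of the endpoint values; the inequality $\I^c(\tfrac{\mu_0+\mu_1}{2})>\tfrac{\I^c(\mu_0)+\I^c(\mu_1)}{2}$ contradicts \emph{convexity} (and is perfectly consistent with concavity), not concavity as you claim. Dually, the uniform product law sitting at the midpoint of two correlated laws of positive intricacy gives $\I^c(\tfrac{\nu_0+\nu_1}{2})=0<\tfrac{\I^c(\nu_0)+\I^c(\nu_1)}{2}$, which contradicts \emph{concavity}, not convexity. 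Since you exhibit both examples, the lemma's conclusion (neither convex nor concave) still follows once the labels are swapped, but as written each individual deduction is false. Everything else --- the use of Theorem \ref{main1}(2) to get $c^2_1>0$ (the paper forward-references the equivalent Lemma \ref{alors} in the same way), and the reduction to $N=2$ with constant padding --- is fine.
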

\begin{proof}
Continuity is obvious and existence of the maximum follows from the
compactness of the finite-dimensional simplex $\cM(d,N)$.
To disprove convexity and concavity of non-null intricacies,
we use the following examples.
Pick $I$ with at least two elements, say $1$ and
$2$. Observe that $K:=c_{\{1\}}^I+c_{\{2\}}^I$ is positive by the
non-degeneracy of $\I^c$ (see Lemma \ref{alors} below). Fix $d\geq2$.

First, for $i=0,1$, let $\mu_i$ over $\{0,\dots,d-1\}^I$
be defined by $\mu_i(i,i,0,\dots,0)=1$. We have:
 $$
    \I^c\left(\frac{\mu_0+\mu_1}2\right) \geq K \cdot \log d
     > \frac{\I^c(\mu_0)+\I^c(\mu_1)}2 = 0.
 $$
Second, let $\nu_0$ be defined by $\nu_0(0,0,0,\dots,0)=\nu_0(1,1,0,\dots,0)=1/2$ and
$\nu_1$ by $\nu_1(0,1,0,\dots,0)=\nu_1(1,0,0,\dots,0)=1/2$. We have:
 $$
   \I^c\left(\frac{\nu_0+\nu_1}2\right) = 0
     <  K \cdot \log d \leq \frac{\I^c(\nu_0)+\I^c(\nu_1)}2.
 $$
\end{proof}

\noindent
The following expression of an intricacy as a non-convex combination of the entropy
of subsystems is crucial to its understanding.
\begin{lemma}
For any intricacy $\I^c$ and $X\in\cX(d,N)$
 \begin{equation}\label{eq:I-as-H}
     \I^c(X) 
         = 2\left(\sum_{S\subset I} c_S^I \H(X_S)\right) - \H(X).
 \end{equation}
\end{lemma}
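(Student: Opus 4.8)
The plan is to start from the definition $\I^c(X)=\sum_{S\subset I} c^I_S\,\MI(X_S,X_{S^c})$ and rewrite each mutual information in terms of entropies via $\MI(X_S,X_{S^c})=\H(X_S)+\H(X_{S^c})-\H(X_S,X_{S^c})$. Since $(X_S,X_{S^c})$ is just a reindexing of the whole system $X=X_I$, we have $\H(X_S,X_{S^c})=\H(X)$, so
\[
    \I^c(X)=\sum_{S\subset I} c^I_S\bigl(\H(X_S)+\H(X_{S^c})-\H(X)\bigr)
          =\sum_{S\subset I} c^I_S\,\H(X_S)+\sum_{S\subset I} c^I_S\,\H(X_{S^c})-\H(X)\sum_{S\subset I} c^I_S.
\]
The normalization $\sum_{S\subset I} c^I_S=1$ from \eqref{eq:cond-CIS} disposes of the last term, giving $-\H(X)$.

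Next I would handle the middle sum by the substitution $S\mapsto S^c$, which is a bijection of the power set of $I$ onto itself: $\sum_{S\subset I} c^I_S\,\H(X_{S^c})=\sum_{T\subset I} c^I_{T^c}\,\H(X_T)$. Here the symmetry condition $c^I_{S^c}=c^I_S$ (again from \eqref{eq:cond-CIS}, part of the definition of a system of coefficients, hence available for any intricacy) shows $c^I_{T^c}=c^I_T$, so this sum equals $\sum_{T\subset I} c^I_T\,\H(X_T)$, identical to the first sum. Adding the two copies yields $2\sum_{S\subset I} c^I_S\,\H(X_S)$, and combining with the $-\H(X)$ term gives exactly \eqref{eq:I-as-H}.

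One small bookkeeping point to verify is that the boundary terms $S=\emptyset$ and $S=I$ behave correctly: by the stated convention $\MI(X_\emptyset,X_I)=\MI(X_I,X_\emptyset)=0$, while in the entropy form one uses $\H(X_\emptyset)=0$, so the contributions of these terms are consistent on both sides (the term $S=\emptyset$ contributes $c^I_\emptyset\cdot 0$ to the mutual-information sum and $c^I_\emptyset\,\H(X_\emptyset)+c^I_I(-\ldots)$ — this needs just a line of checking, noting $\H(X_\emptyset)=0$ and $\H(X_I)=\H(X)$).

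There is essentially no obstacle here; the only thing to be careful about is invoking the symmetry $c^I_{S^c}=c^I_S$, which is guaranteed by Definition \ref{def1} for every system of coefficients (not merely for intricacies), and the normalization $\sum_S c^I_S=1$, likewise built into \eqref{eq:cond-CIS}. So the identity in fact holds for every mutual information functional $\I^c$, and a fortiori for every intricacy; I would state it in this generality if convenient, or simply record it as in the lemma.
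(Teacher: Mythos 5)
Your proposal is correct and is exactly the paper's argument: expand each $\MI(X_S,X_{S^c})$ as $\H(X_S)+\H(X_{S^c})-\H(X)$, then use the symmetry $c^I_{S^c}=c^I_S$ and the normalization $\sum_S c^I_S=1$ to collect the two entropy sums into one with a factor of $2$. The boundary check for $S\in\{\emptyset,I\}$ (via $\H(X_\emptyset)=0$) and the observation that only the system-of-coefficients axioms are needed are both fine.
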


\begin{proof}
The result readily follows from: $\MI(X,Y)=\H(X)+\H(Y)-\H(X,Y)$, $c^I_S=c^I_{S^c}$, and $\sum_S c^I_S=1$.
\end{proof}

\noindent
We introduce the notation
\[
\MI(S):=
\MI(X_S,X_{I\setminus S})
\]
which will be used only when the understood dependence on $X$ and $I$ is clear.

\begin{lemma}\label{bert}
For any intricacy $\I^c$ and any system $X\in\cX(d,N)$
 $$
    0\leq \I^c(X) \leq \frac{N}{2}\log d.
 $$
\end{lemma}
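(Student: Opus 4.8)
The lower bound $\I^c(X)\geq 0$ is already essentially recorded: each $\MI(X_S,X_{S^c})\geq 0$ and all $c^I_S\geq 0$, so $\I^c(X)=\sum_S c^I_S\,\MI(S)\geq 0$. The real content is the upper bound, and the plan is to exploit the identity \eqref{eq:I-as-H}, writing
\[
\I^c(X) = 2\sum_{S\subset I} c^I_S\,\H(X_S) - \H(X).
\]
The idea is to pair each $S$ with its complement $S^c$ and use the symmetry $c^I_S=c^I_{S^c}$ together with $\sum_S c^I_S = 1$. Concretely, group the sum over unordered pairs $\{S,S^c\}$ and bound $\H(X_S)+\H(X_{S^c})$. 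The crude bound $\H(X_S)\leq |S|\log d$ gives $\H(X_S)+\H(X_{S^c})\leq (|S|+|S^c|)\log d = N\log d$, hence $2\sum_S c^I_S\H(X_S)\leq N\log d$ after using $\sum_S c^I_S=1$, and since $\H(X)\geq 0$ we conclude $\I^c(X)\leq N\log d$. But this is off by a factor of $2$ from the claimed bound, so a sharper estimate is needed.

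To get the factor $\tfrac12$, the key inequality is subadditivity of entropy in the sharp form that uses $\H(X)$ itself: for any $S$, $\H(X_S)+\H(X_{S^c}) \geq \H(X)$ (this is just $\MI(X_S,X_{S^c})\geq 0$), but we want an \emph{upper} bound. Instead, use $\H(X_S)\leq \H(X)$ and $\H(X_{S^c})\leq\H(X)$ only when helpful; the clean route is: $\H(X_S)\leq |S|\log d$ and also $\H(X_S) = \H(X) - \H(X_{S^c}\mid X_S) \leq \H(X)$. The trick is to average the two estimates $\H(X_S)\le |S|\log d$ and $\H(X_{S^c})\le |S^c|\log d$ \emph{against} the bound $\H(X_S)+\H(X_{S^c})\ge \H(X)$: writing $2\H(X_S) = \big(\H(X_S)+\H(X_{S^c})\big) + \big(\H(X_S)-\H(X_{S^c})\big)$ and summing with weights $c^I_S$, the antisymmetric part $\H(X_S)-\H(X_{S^c})$ cancels by the symmetry $c^I_S=c^I_{S^c}$ upon relabeling $S\leftrightarrow S^c$, leaving $2\sum_S c^I_S\H(X_S) = \sum_S c^I_S\big(\H(X_S)+\H(X_{S^c})\big) \leq \sum_S c^I_S \cdot N\log d = N\log d$. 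Combined with \eqref{eq:I-as-H}, $\I^c(X) = 2\sum_S c^I_S \H(X_S) - \H(X) \leq N\log d - \H(X)$. This still is not enough by itself.

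The resolution is to bound $\sum_S c^I_S\big(\H(X_S)+\H(X_{S^c})\big)$ more carefully: by subadditivity $\H(X_S)+\H(X_{S^c})\geq \H(X)$, but for the upper bound we split $\H(X_S)+\H(X_{S^c}) = \H(X) + \MI(X_S,X_{S^c})$, giving $2\sum_S c^I_S\H(X_S) = \H(X) + \sum_S c^I_S \MI(S) = \H(X) + \I^c(X)$ — which just reproduces \eqref{eq:I-as-H} and proves nothing new. The genuine idea must instead bound $\MI(X_S,X_{S^c})$ directly: since $\MI(X_S,X_{S^c})\leq \min(\H(X_S),\H(X_{S^c})) \leq \min(|S|,|S^c|)\log d \leq \tfrac{N}{2}\log d$, we get $\I^c(X) = \sum_S c^I_S\MI(S) \leq \tfrac{N}{2}\log d \sum_S c^I_S = \tfrac{N}{2}\log d$. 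That is the whole argument. So the plan reduces to: (i) cite $\MI(X,Y)\le \min(\H(X),\H(Y))$ from the Appendix; (ii) observe $\H(X_S)\le |S|\log d$; (iii) deduce $\MI(S)\le \min(|S|,N-|S|)\log d\le \tfrac{N}{2}\log d$; (iv) average using $\sum_S c^I_S = 1$ and $c^I_S\ge 0$. The only mild subtlety — the main (very small) obstacle — is handling the boundary terms $S\in\{\emptyset,I\}$, where $\MI$ is set to $0$ by convention and the inequality $\MI(S)\le\tfrac N2\log d$ still holds trivially; everything else is routine.
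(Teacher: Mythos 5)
Your final argument — bounding each term by $\MI(X_S,X_{S^c})\le\min\{\H(X_S),\H(X_{S^c})\}\le\min\{|S|,N-|S|\}\log d\le\tfrac N2\log d$ and then averaging with $\sum_S c^I_S=1$ — is exactly the paper's proof, and it is correct. The preliminary detours through \eqref{eq:I-as-H} are unnecessary but harmless.
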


\begin{proof}
The inequalities follow from basic properties of the mutual information (see the Appendix):
 $$
    0\leq \MI(S)\leq \min\{\H(X_S),\H(X_{S^c})\}
      \leq \min\{|S|,N-|S|\} \log d \leq \frac{N}2  \log d.
 $$
\end{proof}

\subsection{Simple examples}\label{tse}
We give some examples of finite systems and compute their intricacies both
for illustrative purposes and for their use in some proofs below.

Let $X_i$ take values in $\{0,\ldots,d-1\}$ for all $i\in I$,
a finite subset of $\N^*$. The first two examples show that
total order and total disorder make the intricacy vanish.

\begin{example}[Total disorder]\label{gb}
{\rm If the variables $X_i$ are independent then each mutual information
is zero and therefore:
 $
    \I^c(X)=0. \qquad \qquad \square
 $
}
\end{example}
\begin{example}[Total order]\label{gb2}
{\rm If each $X_i$ is a.s. equal to a constant $c_i$ in $\{0,\ldots,d-1\}$,
then, for any $S\ne\emptyset$, $\H(X_S)=0$. Hence,
 $
   \I^c(X) = 0. \qquad \qquad \square
 $
}
\end{example}
For $N=2,3$, each mutual information can be maximized separately: there is no frustration
and it is easy to determine the maximizers of non-null intricacies.
\begin{example}[Case $N=2$]\label{ex:smallN2}
{\rm
Let first $N=2$ and $\I^c$ be a non-null intricacy. Then by Theorem \ref{main1}
$c^I_S=c^{|I|}_{|S|}$ and therefore
\[
\I^c(X) = \left( c^{\{1,2\}}_{\{1\}}+ c^{\{1,2\}}_{\{2\}} \right) \MI(X_1,X_2)
=2c^2_1 \, \MI(X_1,X_2), \qquad X\in\cX(d,2),
\]
and moreover $c^2_1>0$.
Therefore the maximizers of $\I^c$ over $\cX(d,2)$ are the maximizers of $X\mapsto\MI(X_1,X_2)$.
By the discussion in subsection \ref{mi} of the appendix, we have that $\MI(X_1,X_2)\leq\min\{\H(X_1),\H(X_2)\}$. Now,
$\MI(X,Y)=\H(X_1)=\H(X_2)$ iff each variable is a function of
the other.

Therefore, the maximizers are exactly the following systems $X=(X_1,X_2)$.
$X_1$ is a uniform r.v. over $\{0,\dots,d-1\}$
and the other is a deterministic function of the first.
$X_2=\sigma(X_1)$ for a given permutation $\sigma$ of $\{0,\dots,d-1\}$.
In the case of the neural complexity, $\max_{X\in\cX(d,2)}\I(X)=(\log d)/3$.
\qquad \qquad $\square$
}
\end{example}
\begin{example}[Case $N=3$]\label{ex:smallN}
{\rm
Let $N=3$ and $I:=\{1,2,3\}$. By Theorem \ref{main1}, $c^I_S=c^{|I|}_{|S|}$, $c^3_1=c^3_2$
 and therefore
\[
\I^c(X) = 2c^3_1\left(\MI(X_1,X_{\{2,3\}})+\MI(X_2,X_{\{1,3\}})+\MI(X_3,X_{\{1,2\}})\right),
\]
and moreover $c^3_1>0$.
Here we simultaneously maximize each of these mutual informations.
The optimal choice is a system $(X_1,X_2,X_3)$ where every pair $(X_i,X_j)$, $i\ne j$,
is uniform over $\{0,\dots,d-1\}^2$,
and the third variable is a function of $(X_i,X_j)$. This is realized iff
$(X_1,X_2)$ is uniform over $\{0,\dots,d-1\}^2$
and $X_3=\phi(X_1,X_2)$, where $\phi$ is a (deterministic) map such that,
for any $i\in\{0,\dots,d-1\}$, $\phi(i,\cdot)$ and $\phi(\cdot,i)$ are permutations
of $\{0,\dots,d-1\}$. For instance: $\phi(x_1,x_2)=x_1+x_2\mod d$.
In the case of the neural complexity, $\max_{X\in\cX(d,3)}\I(X)=(\log d)/2$.
\qquad \qquad $\square$
}
\end{example}
The maximizers of  examples \ref{ex:smallN2} and \ref{ex:smallN} are very
special. For instance, they are exchangeable, contrarily to the case of
large $N$ according to Theorem \ref{thm:exchsta}. For $N=4$ and beyond
it is no longer possible to separately maximize each mutual information
and we do not have an explicit description of the maximizers. We shall however
see that, as in the above examples, maximizers have small support, see
Proposition \ref{prop:support}.
\begin{rem}\label{diffint}
{\rm
Example \ref{ex:smallN} has an interesting interpretation: for $N=3$,
a system with large intricacy shows in a simple way a combination of
{\it differentiation} and {\it integration}, as it is expected in the
biological literature, see the Introduction. Indeed,
any subsystem of two variables is independent (differentiation), while
the whole system is correlated (integration).
}
\end{rem}

Another interesting case is that of a large system where one variable is free and
all others follow it deterministically.
\begin{example}[A totally synchronized system]\label{gb3}
{\rm
Let $X_1$ be a uniform $\{0,\ldots,d-1\}$-valued random variable.
We define now $(X_2,\ldots,X_N):=\phi(X_1)$, where
$\phi$ is any deterministic map from $\{0,\ldots,d-1\}$ to $\{0,\ldots,d-1\}^{N-1}$. Then,
for any $S\ne\emptyset$, $\H(X_S)=\log d$ and, if additionally $S^c\ne\emptyset$,
$\H(X_S|X_{S^c})=0$ so that each mutual information $\MI(X_S,X_{S^c})$ is $\log d$ if
$S\notin\{\emptyset,I\}$. Hence,
 $$
   \I^c(X) = \sum_{S\subset I\backslash\{\emptyset, I\}} c_S^I \cdot \log d
=\left(1-c_\emptyset^I-c_I^I\right) \log d. \qquad \qquad \square
 $$
}
\end{example}
In the next example we build for every $x\in\,]0,1[$ a system $X\in\cX(d,2)$ with
entropy $\H(X)=x\log d^2$ and positive intricacy.
\begin{example}[A system with positive intricacy and arbitrary entropy]\label{gb3.5}
{\rm Let first $x\in\,]0,1/2]$. Let $X_1$ be $\{0,\ldots,d-1\}$-valued with $\H(X_1)=2x\log d$.
Such a variable exists because entropy is continuous over the
connected simplex of probability measures on $\{0,\ldots,d-1\}$ and attains the values $0$
over a Dirac mass and $\log d$ over the uniform distribution. We define now $X_2:=X_1$ and
$X:=(X_1,X_2)\in\cX(d,2)$. Therefore $\H(X)=2x\log d=x\log d^2$,
$\MI(X_1,X_2)=\H(X_1)=2x\log d$ and, arguing as in Lemma \ref{ex:smallN2}
\[
\I^c(X)=2c^2_1 \, \MI(X_1,X_2)=4x\,c^2_1 \, \log d>0.
\]

Let now $x\in\,]1/2,1[$.
Let $(Y_1,Y_2,B)$ be an independent triple such that $Y_i$ is uniform over $\{0,\ldots,d-1\}$
and $B$ is Bernoulli with parameter $p\in[0,1]$ and set
\[
X_1:=Y_1, \qquad X_2:=\un{(B=0)}\,Y_1+\un{(B=1)}\,Y_2, \qquad  X:=(X_1,X_2).
\]
Then both $X_1$ and $X_2$ are uniform on $\{0,\ldots,d-1\}$. On the other hand, it is easy to
see that $\H(X)$, as a function of $p\in[0,1]$, interpolates continuously between $\log d$ and $2\log d$.
Thus, for every $x\in\,]1/2,1[$ there is a $p\in[0,1]$ such that $\H(X)=x\log d^2$. In this case
$\MI(X_1,X_2)=2(1-x)\log d$ and we obtain
\[
\I^c(X)=2c^2_1 \, \MI(X_1,X_2)=4(1-x)\,c^2_1 \, \log d>0. \qquad \square
\]
}
\end{example}

Intricacy can indeed reach over $\cX(d,N)$ the order $N$ of Lemma
\ref{bert}, as the next example shows.

\begin{example}[Systems with uniform intricacy proportional to $N$]\label{gb5}
{\rm Let us fix $d\geq2$. For $N\geq 2$, we are going to build a
system $(X_i)_{i\in I}$, $I=\{1,\dots,N\}$, over the alphabet
$\{0,\dots,d^2-1\}$ for which $\I^U(X)/N$ converges to $(\log d^2)/4$; later, in
Example \ref{gb5a}, we shall generalize this to an arbitrary
intricacy.

Let $Y_1,\dots,Y_{N}$ be i.i.d. uniform $\{0,\dots,d-1\}$-valued
random variables and define $X_i:=Y_i+dY_{i+1}$ for $i=1,\dots,N-1$,
$X_N:=Y_N$. Note that $X\in\cX(d^2,N)$ and $\H(X)=N\log d=(N/2)\log
d^2$. For $S\subset I$, set
\[
\begin{split}
\Delta_S & :=\{k=1,\dots,N-1:\un{S}(k)\ne \un{S}(k+1)\},
\\ U_S & :=\{k=1,\dots,N-1:\un{S}(k)=1\ne \un{S}(k+1)\}.
\end{split}
\]
Observe that $\H(X_S)=(|S|+|U_S|)\log d$. Indeed, this is given by
$\log d$ times the minimal number of $Y_i$ needed to define $X_S$;
every $k\in S$ counts for one if $k\in S\setminus U_S$, for two if
$k\in U_S$; therefore we find $|S|-|U_S|+2 |U_S|=|S|+|U_S|$.
Moreover, $|U_S|+|U_{S^c}|=|\Delta_S|$. Therefore
\[
\begin{split}
\MI(S) & =( |U_S|+|S| + |U_{S^c}|+|S^c|-N)\log d = |\Delta_S| \,
\log d.
\end{split}
\]
Moreover we have a bijection:
 $$
  S\in \{0,1\}^{\{1,\dots,N\}}\mapsto (\un{S}(1),\Delta_S)\in\{0,1\}\times \{0,1\}^{\{1,\dots,N-1\}}.
 $$
Hence:
\[
\begin{split}
    \frac{\I^U(X)}{\log d} & =2^{-N}\sum_{S\subset I} |\Delta_S|
      = 2^{-N} \times 2\sum_{\Delta\subset\{1,\dots,N-1\}} |\Delta|
= 2^{-N+1} \sum_{k=0}^{N-1} \binom{N-1}{k} k
 \\ &       = 2^{-N+1} (N-1) 2^{N-2}       = \frac{N-1}{2}.
\end{split}
\]
Therefore for this $X\in\cX(d^2,N)$:
\[
\qquad\qquad\qquad \I^U(X) = \frac{N-1}{4} \,\log (d^2). \qquad \qquad \qquad \qquad\square
\]
}
\end{example}

The following example will be useful to show that an intricacy $\I^c$
determines its coefficients $c\in\SoC(\N^*)$ in Lemma \ref{lem:uniq-coef}
below. 
\begin{example}[A system with a synchronized sub-system]\label{gb4}
{\rm We consider a system of uniform variables,
with a subset of equal ones and the remainder independent. More precisely,
let $I\subset\subset\N^*$, $\emptyset\ne K\subset I$ and fix $i_0\in K$.
$(X_i)_{i\in I}\in\cX(d,I)$ is the system satisfying:
\begin{itemize}
\item[(i)] the family $X_{K^c\cup\{i_0\}}$ is uniform on $\{0,\ldots,d-1\}^{K^c\cup\{i_0\}}$;
\item[(ii)] $X_i=X_{i_0}$ for all $i\in K$.
\end{itemize}
It follows that
\[
\H(X_S) = \left( |S\setminus K|+\un{(S\cap K\ne \emptyset)}\right) \log d
\]
and therefore
\[
\MI(S) = \left( \un{(S\cap K\ne \emptyset)}+\un{(S^c\cap K\ne \emptyset)}-1\right) \log d,
\]
i.e. $\MI(S)=0$ unless $S$ and
$S^c$ both intersect $K$ and then $\MI(S)=\log d$. Thus
 $$
    \I^c(X) = \log d \sum_{S\subset I} c^I_S \, \un{(\emptyset\ne S\cap K\ne K)},  \qquad
    \H(X) = (|K^c|+1)\log d. \qquad \square
 $$
 }
\end{example}

\section{Weak additivity, projectivity and representation}\label{additivity}

In this section we prove Theorem \ref{main1}, by
studying the additivity of mutual information
functionals and characterizing it in terms of the coefficients.
We establish a probabilistic representation of all intricacies
and check that the neural complexity is indeed an intricacy.
We conclude this section by some useful consequences of this representation.

Throughout this section, $X=(X_i)_{i\in I}$ and $Y=(Y_i)_{i\in J}$, will be two systems
defined on the same probability space and we shall consider the joint family
$(X,Y)=\{X_i,Y_j:i\in I,j\in J\}$.
$(X,Y)$ is again a system and its index set is the disjoint union $I\sqcup J$ of $I$ and $J$.

\subsection{Projectivity and Additivity}\label{sec:proof-lem-special}

\noindent
We show that weak additivity and exchangeability can be read off the coefficients and that
non-null intricacies are neither sub-additive nor super-additive.
\begin{prop}\label{prop:product}
Let $\I^c$ be a mutual information functional. Then
 \begin{enumerate}
  \item $\I^c$ is exchangeable if and only if $c^I_S$ depends only on
  $|I|$ and $|S|$
 \item $\I^c$ is weakly additive if and only if the coefficients are {\bf projective}, i.e., satisfy
   \begin{equation}\label{mainc}
   \forall \, I\subset\subset\N^*, \ \forall \, J\subset\subset\N^*\setminus I, \
   \forall \, S\subset I, \qquad
   c^I_S = \sum_{T\subset J} c^{I\sqcup J}_{S\sqcup T}.
   \end{equation}
 \item Let $\I^c$ be an intricacy. Then,
for non-necessarily independent systems $X,Y$, we have:
$\I^c(X,Y)\geq\max\{\I^c(X),\I^c(Y)\}$ and the approximate
additivity:
  $$
    |\I^c(X)+\I^c(Y)-\I^c(X,Y)| \leq \MI(X,Y);
  $$
  \item $\I^c$ can fail to be super-additive or sub-additive.
 \end{enumerate}
\end{prop}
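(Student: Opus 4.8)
The plan is to prove the four items in turn; (1) and (2) will rest on one combinatorial mechanism, and (3)--(4) will both flow from a single exact identity. The two ``easy'' implications are direct. If $c^I_S$ depends only on $|I|$ and $|S|$ and $\phi\colon I\to J$ is a bijection, then for $T\subseteq J$ the family $Y_T$ is $X_{\phi^{-1}(T)}$ up to relabelling, so $\MI(Y_T,Y_{T^c})=\MI(X_{\phi^{-1}(T)},X_{\phi^{-1}(T)^c})$ and $c^J_T=c^I_{\phi^{-1}(T)}$; summing over $T$ gives $\I^c(Y)=\I^c(X)$, i.e.\ exchangeability. For the ``if'' half of (2): if $X\perp Y$ on disjoint $I,J$, then for $U=S\sqcup T\subseteq I\sqcup J$ additivity of entropy over independent families yields $\MI\big((X_S,Y_T),(X_{S^c},Y_{T^c})\big)=\MI(X_S,X_{S^c})+\MI(Y_T,Y_{T^c})$, and \eqref{mainc} --- applied with $(I,J)$ and with $(J,I)$, so that $\sum_{T}c^{I\sqcup J}_{S\sqcup T}=c^I_S$ and $\sum_{S}c^{I\sqcup J}_{S\sqcup T}=c^J_T$ --- collapses $\sum_{S,T}c^{I\sqcup J}_{S\sqcup T}\big[\MI(X_S,X_{S^c})+\MI(Y_T,Y_{T^c})\big]$ to $\I^c(X)+\I^c(Y)$.

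The converses of (1) and (2) are the heart of the proposition, and the step I expect to be the main obstacle is reading the coefficients off from test systems. For each nonempty $e\subseteq I$ let $X^{(e)}$ be the system in which all coordinates in $e$ equal a single uniform variable and all other coordinates are constant (a variant of Example~\ref{gb4}); then $\H(X^{(e)}_S)=\log d$ when $S\cap e\neq\emptyset$ and $0$ otherwise, so $\MI(X^{(e)}_S,X^{(e)}_{S^c})=\log d$ exactly when $e$ meets both $S$ and $S^c$, whence $\I^c(X^{(e)})=\log d\sum_{S:\ e\text{ meets }S,S^c}c^I_S$; using $c^I_{S^c}=c^I_S$ and $\sum_S c^I_S=1$ this equals $\log d\,(1-2U(e))$ with $U(e):=\sum_{S\supseteq e}c^I_S$. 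Thus these systems read off the ``up-sums'' $U(e)$. For (1)$\Rightarrow$, exchangeability forces $\I^c(X^{(e)})$, hence $U(e)$, to depend only on $|e|$ (and, across bijections, not on the ambient $I$), and M\"obius inversion on the Boolean lattice, $c^I_S=\sum_{e\supseteq S}(-1)^{|e|-|S|}U(e)$, then shows $c^I_S$ depends only on $|I|$ and $|S|$. For (2)$\Rightarrow$, apply weak additivity to $X^{(e)}$ on $I$ together with a constant system on a disjoint $J$ (which has zero intricacy and is invisible to $\MI$); this gives $\sum_{S:\ e\text{ meets }S,S^c}a_S=0$ for every nonempty $e$, where $a_S:=\sum_{T\subseteq J}c^{I\sqcup J}_{S\sqcup T}-c^I_S$ is symmetric and satisfies $\sum_S a_S=0$; hence every up-sum of $a$ vanishes, so $a\equiv0$ by M\"obius inversion, which is exactly \eqref{mainc}.

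For (3), since an intricacy is projective by (2), I start from \eqref{eq:I-as-H}, write each $U\subseteq I\sqcup J$ as $S\sqcup T$, and use $\sum_{T}c^{I\sqcup J}_{S\sqcup T}=c^I_S$, $\sum_{S}c^{I\sqcup J}_{S\sqcup T}=c^J_T$ and $\H(X,Y)=\H(X)+\H(Y)-\MI(X,Y)$ to reach the exact identity
\[
\I^c(X,Y)=\I^c(X)+\I^c(Y)+\MI(X,Y)-2\sum_{S\subseteq I,\;T\subseteq J}c^{I\sqcup J}_{S\sqcup T}\,\MI(X_S,Y_T).
\]
By the data-processing inequality, $0\le\MI(X_S,Y_T)\le\MI(X,Y)$, and since $\sum_{S,T}c^{I\sqcup J}_{S\sqcup T}=1$ the subtracted term lies in $[0,2\MI(X,Y)]$; this gives $|\I^c(X)+\I^c(Y)-\I^c(X,Y)|\le\MI(X,Y)$. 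For the bound $\I^c(X,Y)\ge\max\{\I^c(X),\I^c(Y)\}$ I instead expand $\H(X_S,Y_T)=\H(X_S)+\H(Y_T\mid X_S)$ and $\H(X,Y)=\H(X)+\H(Y\mid X)$ in \eqref{eq:I-as-H}; using projectivity this gives $\I^c(X,Y)-\I^c(X)=2\sum_{S,T}c^{I\sqcup J}_{S\sqcup T}\H(Y_T\mid X_S)-\H(Y\mid X)$, and since $\H(Y_T\mid X_S)\ge\H(Y_T\mid X)$ and $\sum_S c^{I\sqcup J}_{S\sqcup T}=c^J_T$, this is at least $2\sum_T c^J_T\H(Y_T\mid X)-\H(Y\mid X)=\sum_x\Prob(X=x)\,\I^c(\mu_x)\ge0$, where $\mu_x$ is the law of $Y$ conditioned on $\{X=x\}$ and we used Lemma~\ref{bert}; the inequality $\I^c(X,Y)\ge\I^c(Y)$ is symmetric.

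Item (4) follows from the identity in (3) and two systems, for every non-null intricacy (the null one being trivially both sub- and super-additive). First, take $I=\{1\}$, $J=\{2\}$ with $X_1=Y_2$ a single uniform variable: then $\I^c(X)=\I^c(Y)=0$, while the only nonzero term of the sum is $c^2_2\,\MI(X_1,Y_2)=c^2_0\log d$, so $\I^c(X,Y)=\log d-2c^2_0\log d=2c^2_1\log d>0$ (as $c^2_1>0$), and sub-additivity fails. Second, take $I=\{1,2\}$, $J=\{3,4\}$ with $X=(Z,Z)$, $Y=(Z,Z)$ for one common uniform $Z$: then $\MI(X,Y)=\log d$ and $\MI(X_S,Y_T)=\log d$ for all nonempty $S,T$, so $\I^c(X,Y)-\I^c(X)-\I^c(Y)=\log d\,\big(1-2\sum_{\emptyset\neq S\subseteq I,\;\emptyset\neq T\subseteq J}c^{4}_{S\sqcup T}\big)$; since $\sum_k\binom4k c^4_k=1$, a short count gives $\sum_{\emptyset\neq S,\;\emptyset\neq T}c^{4}_{S\sqcup T}=\tfrac12+c^4_2$, so this difference is $-2c^4_2\log d<0$ (as $c^4_2>0$ by Theorem~\ref{main1}(2)), and super-additivity fails.
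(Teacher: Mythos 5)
Your proof is correct, and while its overall architecture (synchronized test systems, then an inversion to recover the coefficients) parallels the paper's, several steps take a genuinely different and in places sharper route. For the converses of (1) and (2) the paper proves a determination lemma (Lemma \ref{lem:uniq-coef}) by induction on $|S|$ using the systems of Example \ref{gb4}; you instead read off the up-sums $U(e)=\sum_{S\supseteq e}c^I_S$ from the same kind of systems and recover the coefficients by M\"obius inversion on the Boolean lattice, which is an equivalent but cleaner, non-inductive packaging of the same idea. For (3) the paper applies the termwise almost-additivity \eqref{eq:MI-add} of mutual information, whereas you derive the exact identity $\I^c(X,Y)=\I^c(X)+\I^c(Y)+\MI(X,Y)-2\sum_{S,T}c^{I\sqcup J}_{S\sqcup T}\MI(X_S,Y_T)$, which gives the two-sided bound in one stroke and, as a bonus, powers both counterexamples in (4); your monotonicity argument via conditional laws and $\I^c(\mu_x)\geq 0$ is also different from the paper's one-line appeal to \eqref{eq:MI-mono}, though somewhat longer. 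Finally, for the failure of super-additivity the paper's example is only carried out under an unverified hypothesis on the coefficients, namely $c^I_\emptyset+c^I_I<\tfrac12+\tfrac12\bigl(c^{I\sqcup I}_\emptyset+c^{I\sqcup I}_{I\sqcup I}\bigr)$, while your four-copies-of-$Z$ example works for every non-null intricacy once $c^4_2>0$ is known; note only that this positivity (like $c^2_1>0$ in the sub-additivity example, which the paper also uses) comes from Lemma \ref{alors}, i.e.\ from the integral representation proved later --- a harmless forward reference, since that representation depends only on items (1)--(2) and not on item (4).
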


To prove this proposition we shall need the following fact:

\begin{lemma}\label{lem:uniq-coef}
Let $d\geq2$ and $I$ be a finite set. The data $\I^c(X)$ for $X\in\cX(d,J)$
for all $J\subset\subset I$ determine $c\in\SoC(I)$.
\end{lemma}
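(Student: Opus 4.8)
I want to show that the collection of numbers $\I^c(X)$ as $X$ ranges over all systems with at most $|I|$ coordinates (and alphabet size $d$) pins down each individual coefficient $c^I_S$. The natural strategy is to build, for each fixed $K \subset I$ with $K \neq \emptyset$, a family of explicit test systems whose intricacy is a known linear combination of the $c^I_S$, and then invert. Example~\ref{gb4} is tailor-made for this: for each nonempty $K \subseteq J$ with $J \subseteq I$, the synchronized-subsystem construction gives a system $X^{J,K} \in \cX(d,J)$ with
\[
\I^c(X^{J,K}) = \log d \sum_{S \subset J} c^J_S\, \un{(\emptyset \neq S\cap K \neq K)}
= \log d \left(1 - \sum_{S\subset J:\, K\subseteq S} c^J_S - \sum_{S \subset J:\, S \cap K = \emptyset} c^J_S\right).
\]
Since $c^J_S = c^J_{S^c}$ and $c^J_\emptyset + c^J_J + \sum_{S \notin\{\emptyset,J\}} c^J_S = 1$, the value $\I^c(X^{J,K})/\log d$ is a known affine function of the quantities $a^J_K := \sum_{S \subset J:\, K \subseteq S} c^J_S$ (these are exactly the ``cumulative'' coefficients). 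So the data determine $a^J_K$ for every nonempty $K$ and every $J$.

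\textbf{Inverting the cumulative sums.} The next step is to recover the $c^J_S$ themselves from the numbers $a^J_K = \sum_{S \supseteq K} c^J_S$. This is a standard Möbius inversion on the Boolean lattice of subsets of $J$: one has
\[
c^J_S = \sum_{K \supseteq S} (-1)^{|K| - |S|}\, a^J_K
\]
for all $S \subseteq J$ (with the convention $a^J_\emptyset = 1$, which is determined by the normalization, handling the one case $K = \emptyset$ not covered by Example~\ref{gb4}). Thus from the values $\I^c(X^{J,K})$ alone — which are among the given data since each $X^{J,K} \in \cX(d,J)$ with $J \subset\subset I$ — we reconstruct every $c^J_S$, in particular every $c^I_S$. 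That gives $c \in \SoC(I)$ uniquely.

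\textbf{Main obstacle.} The only real subtlety is bookkeeping: one must make sure all the systems $X^{J,K}$ used actually lie in the allowed class (they do, since $d \geq 2$ suffices to have a nontrivial uniform variable, and $J$ ranges over subsets of $I$), and that the affine relation between $\I^c(X^{J,K})$ and the $a^J_K$ is genuinely invertible — which it is, because the map $S \mapsto \un{(\emptyset \neq S \cap K \neq K)}$ together with the symmetry $c^J_S = c^J_{S^c}$ leaves $a^J_K$ as the only unknown once lower-order data are known, and then Möbius inversion is exact. I would also remark that one does not even need all of $\cX(d,J)$: the finitely many systems $X^{J,K}$ suffice, so the lemma is really the statement that these particular test systems separate coefficient systems. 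I do not expect any analytic difficulty; the proof is a short combinatorial inversion argument built on Example~\ref{gb4}.
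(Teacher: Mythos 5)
Your proposal is correct, and it rests on the same family of test systems as the paper's proof (the synchronized sub-systems $X^{J,K}$ of Example~\ref{gb4}), but the inversion step is genuinely different. The paper restricts to $|S|\le|J|/2$ via the symmetry $c^J_S=c^J_{S^c}$ and runs an induction on $s=|S|$, at each step choosing $K$ with $|K|=|J|-s$ so that the only yet-undetermined coefficient appearing in $\I^c(X^K)$ is $c^J_K=c^J_{K^c}$; this requires some careful case bookkeeping. You instead observe that for every nonempty $K\subseteq J$ the identity
\begin{equation*}
\frac{\I^c(X^{J,K})}{\log d}
=1-\sum_{S\supseteq K}c^J_S-\sum_{S\cap K=\emptyset}c^J_S
=1-2a^J_K,\qquad a^J_K:=\sum_{S\subseteq J:\,K\subseteq S}c^J_S,
\end{equation*}
holds (the two sums coincide by $c^J_S=c^J_{S^c}$, and they are over disjoint index sets because $K\ne\emptyset$), so each single test system already determines one cumulative coefficient with no induction at all; M\"obius inversion on the Boolean lattice, $c^J_S=\sum_{K\supseteq S}(-1)^{|K|-|S|}a^J_K$ with $a^J_\emptyset=1$ from the normalization, then recovers every $c^J_S$. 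This is a clean and complete argument; if anything, your remark that invertibility holds ``once lower-order data are known'' undersells it, since the relation $\I^c(X^{J,K})/\log d=1-2a^J_K$ involves no lower-order data whatsoever. Both routes use exactly the data allowed by the lemma ($X^{J,K}\in\cX(d,J)$ for $J\subseteq I$), and both show that finitely many explicit systems suffice; yours trades the paper's induction for a standard inclusion--exclusion and is arguably the more transparent of the two.
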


\begin{proof}
Using $c^I_{S^c}=c^I_S$, we restrict ourselves to coefficients
with $|S|\leq|S^c|$, i.e., $|S|\leq|I|/2$.
Let us first consider a system $(X_i)_{i\in I}\in\cX(d,I)$ where all
variables are equal:
$X_i=X_j$ for all $i,j\in I$ and $X_i$ is uniform on $\{0,\ldots,d-1\}$. Then
$\MI(S):=\MI(X_S,X_{S^c})=0$ for $S=\emptyset$ or $S=I$, otherwise $\MI(S)=\log d$.
Hence, using the normalization $1=\sum_S c^I_S$:
 $$
    1-\frac{\I^c(X)}{\log d}=
      \sum_S c^I_S - \sum_{\emptyset\subsetneq S\subsetneq I} c^I_S
      = c^I_\emptyset+c^I_I.
 $$
In particular, $c^I_\emptyset=c^I_I=(1-\I^c(X)/\log d)/2$.

For each $K\subset I$, let $X^K$ be the system as in Example \ref{gb4}. Fix $i_0\in K$. Recall that $\MI(S):=\MI(X_S,X_{S^c})$ is $0$
if $S\supset K$ or $S^c\supset K$,  and is $\log d$ otherwise.
Assume by induction that, for $1\leq s\leq|I|/2$, $c^I_S$ is determined for $|S|<s$
(a trivial assertion for $s=1$).
Picking $K\subset I$ with $|K|=|I|-s\geq|I|/2\geq|K^c|=s$, we get:
 \begin{itemize}
  \item if $|S|<s$, we say nothing of $\MI(S)$ but will use the
  inductive assumption;
  \item if $S=K$ or $S=K^c$, then $\MI(S)=0$;
  \item if $s\leq |S|\leq |K|$,  $S\supset K$ implies $S=K$,
  $S\subset K^c$ implies $S=K^c$ since $s=|K^c|$. In all other
  cases: $\MI(S)=\log d$.
 \end{itemize}
Therefore,
\[
 \begin{split}
    \frac{\I^c(X^K)}{\log d} & = 2\sum_{S\subset I} c^I_S\frac{\MI(S)}{\log d}
      - \frac{\H(X^K)}{\log d}
 \\ & = 4\sum_{|S|<|I|/2} c^I_S\frac{\MI(S)}{\log d}
     + 2\sum_{|S|=|I|/2} c^I_S\frac{\MI(S)}{\log d} - \frac{\H(X^K)}{\log d}
\\ & =
4\sum_{|S|<s} c^I_S\frac{\MI(S)}{\log d}
       + 4\sum_{s\leq |S|<|I|/2} c^I_S + 2\sum_{|S|=|I|/2} c^I_S
       \; - \; 2(c^I_K + c^I_{K^c})
       - \frac{\H(X^K)}{\log d}
       \end{split}
 \]
(the sum over $|S|=|I|/2$ is non-zero only if $|I|$ is even). Using $\sum_S c^I_S=1$ and
$c^I_S=c^I_{S^c}$,
we get:
 $$
  \frac{\I^c(X^K)}{\log d}+\frac{\H(X)}{\log d}-2
= 2\sum_{S\subset I} c^I_S\left(\frac{\MI(S)}{\log d}-1\right)
= 4\sum_{|S|<s} c^I_S\left(\frac{\MI(S)}{\log d}-1\right)
        - 4c^I_{K^c} .
 $$
It follows that $c^I_K=c^I_{K^c}$ is determined for any $K$ with $|K|=s$. This completes the induction step and the proof of the lemma.
\end{proof}

\begin{proof}[Proof of Proposition \ref{prop:product}]
The characterization
of exchangeability is a direct consequence of Lemma \ref{lem:uniq-coef}.

Let us prove the second point.
We first check that weak additivity implies projectivity. For any $X\in\cX(d,I)$ with $I\subset\subset\N^*$ and $J\subset\subset\N^*\setminus I$, we have:
 $$
    \I^c(X)=\I^c(X,Z) = \sum_{S\subset I}\sum_{T\subset J} c^{I\cup J}_{S\cup T} \MI(X_S,X_{S^c})
 $$
for $Z=(Z_j)_{j\in J}$ with each $Z_j$ a.s. constant and therefore independent of $X$.
Lemma \ref{lem:uniq-coef} then implies that \eqref{mainc} holds.
Moreover, \eqref{eq:MI-mono} yields the monotonicity claim of point (2).

For the approximate additivity, we consider \eqref{eq:MI-add} for any $S\subset I$,
$T\subset J$:
 $$
  \MI((X_S,Y_T),(X_{S^c},Y_{T^c})) = \MI(X_S,X_{S^c})+\MI(Y_{T},Y_{T^c})
    \pm \MI(X,Y)
 $$
where $\pm \MI(X,Y)$ denotes a number belonging to $[-\MI(X,Y),\MI(X,Y)]$.
The projectivity now gives:
\[
\begin{split}
\I^c(X,Y) & = \sum_{S\subseteq I, T\subseteq J} c^{I\sqcup J}_{S\sqcup T}
\, \MI(S\sqcup T ) \\
 & = \sum_{S\subseteq I, T\subseteq J} c^{I\sqcup J}_{S\sqcup T}(
\MI(X_S,X_{S^c})+\MI(Y_T,Y_{T^c}) \pm\MI(X,Y))
\\ & = \I^c(X) + \I^c(Y) \pm \MI(X,Y).
\end{split}
\]
This is the approximate additivity of point (2).
If $X$ and $Y$  are independent, then $\MI(X,Y)=0$, proving the weak additivity.

We finally give the counter-examples.
For sub-additivity, it is enough to assume the intricacy to be non-null and
to consider $X=Y$ a single random variable uniform on $\{1,2\}$ and compute:
 $$
   \I^c(X)=\I^c(Y)=0 \text{ whereas }\I^c(X,Y)=2c^2_1\log 2>0.
 $$
For super-additivity, we assume
${c_\emptyset^{I}+c_{I}^{I}}<\frac12+\frac{c_\emptyset^{I\sqcup I}+c_{I\sqcup I}^{I\sqcup I}}2$ and take $X=Y$ a collection of $N=|I|$ copies of the same variable
uniform over $\{0,1\}$.
Then $\MI(S)=\log 2$ except if $S\in\{\emptyset,I\}$, in
which case $\MI(S)=0$. By example \ref{gb3}
\[
\frac{\I^c(X,Y)}{\log 2} = 1-c_\emptyset^{I\sqcup I}-c_{I\sqcup I}^{I\sqcup I}
<2\left(1-c_\emptyset^{I}-c_{I}^{I}\right) = \frac{\I^c(X)+\I^c(Y)}{\log 2}.
\]
\end{proof}
%

\subsection{Probabilistic representation of intricacies}

In this section, we give the probabilistic representation for intricacies.
This will provide us with a way to estimate the maximal value of
intricacy for large systems in \cite{BZ2}. For notational convenience, we consider intricacies over
the positive integers $\N^*$.

We say that a random variable $W$ over $[0,1]$ is {\bf symmetric} if
$W$ and $1-W$ have the same law. A measure on $[0,1]$ is symmetric
if it is the law of a symmetric random variable.

\begin{prop}\label{1917}
Let $\I^c$ be a mutual information functional defined by some
system of coefficients $c\in\mathcal C(\N^*)$ over some
infinite index set, which we assume to be $\N^*$ for notational
convenience.
\begin{enumerate}
\item $\I^c$ is an intricacy,
i.e., it is exchangeable and weakly additive,
if and only if there exists a symmetric random variable $W_c$ over $[0,1]$
with law $\lambda_c$ such that for all $I\subset\subset\N^*$ and
$S\subset I$
%
 \begin{equation}\label{eq:proj}
c^I_S = \E\left(W_c^{|S|}(1-W_c)^{|I|-|S|}\right)
= \int_{[0,1]} x^{|S|}(1-x)^{|I|-|S|}\, \lambda_c(dx).
 \end{equation}
\item Formula \eqref{eq:proj} is equivalent to
 \begin{equation}\label{eq:proj2}
c^I_S = 
\bbP(\z\cap I=S), \qquad
\forall \, I\subset\subset\N^*, \ \forall \, S\subset I,
 \end{equation}
where $\z$ is the random subset of $\N^*$
\begin{equation}\label{explo}
\z := \{i\in\N^*: Y_i\geq W_c\},
\end{equation}
with $(Y_i)_{i\geq 1}$ an i.i.d. sequence of uniform random variables on $[0,1]$, independent of $W_c$.
\item If $\I^c$ is an intricacy, then the law
$\lambda_c$ of $W_c$ is uniquely determined by $\I^c$. Moreover
for all $X\in\cX(d,I)$ independent of $\z$
\[
\I^c(X) = \E(\MI(\z\cap I)), \qquad \MI(S):=\MI(X_S,X_{I\backslash S}).
\]
\end{enumerate}
\end{prop}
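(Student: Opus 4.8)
\textbf{Proof plan for Proposition \ref{1917}.}

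The plan is to establish the chain of equivalences in three stages, using the already-proved Proposition \ref{prop:product} to reduce everything to a statement about projective, symmetric, size-dependent coefficients. First I would observe that, by Proposition \ref{prop:product}(1), exchangeability of $\I^c$ is equivalent to $c^I_S$ depending only on $n:=|I|$ and $k:=|S|$; write $c^n_k$ for this common value. By Proposition \ref{prop:product}(2), weak additivity is then equivalent to the projectivity relation \eqref{mainc}, which in the exchangeable case becomes the Pascal-type recursion $c^n_k = \sum_{j=0}^{m}\binom{m}{j} c^{n+m}_{k+j}$ for all $m\geq 0$; taking $m=1$ already gives $c^n_k = c^{n+1}_k + c^{n+1}_{k+1}$. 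The symmetry condition reads $c^n_k = c^n_{n-k}$. So it suffices to show that a doubly-indexed array $(c^n_k)$ satisfying $c^n_k\geq0$, $\sum_k\binom{n}{k}c^n_k=1$, the symmetry $c^n_k=c^n_{n-k}$, and the recursion $c^n_k=c^{n+1}_k+c^{n+1}_{k+1}$ is precisely the moment array \eqref{eq:proj} of a symmetric probability law $\lambda_c$ on $[0,1]$.

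For the substantive direction (array $\Rightarrow$ measure), I would recognize the recursion as a de Finetti / Hausdorff-moment situation: define $a_{n,k}:=\binom{n}{k}c^n_k = \bbP(\text{exactly }k\text{ successes in first }n\text{ trials})$ for the (putative) exchangeable $0$--$1$ sequence whose finite-dimensional laws are forced by the recursion and symmetry to be consistent. The recursion $c^n_k = c^{n+1}_k + c^{n+1}_{k+1}$ is exactly Kolmogorov consistency for these finite-dimensional distributions on $\{0,1\}^n$ (marginalizing out the last coordinate), and exchangeability within each $n$ is built in since $c^n_k$ depends only on $k$. By the Hausdorff moment problem — equivalently, by de Finetti's theorem for exchangeable $\{0,1\}$-valued sequences — there is a unique probability measure $\lambda_c$ on $[0,1]$ with $c^n_k = \int_{[0,1]} x^k(1-x)^{n-k}\,\lambda_c(dx)$; complete monotonicity of the sequence $k\mapsto c^{n+k}_0$ (which follows from nonnegativity of all the $c$'s together with the recursion, since iterated differences are again coefficients) is what licenses the moment representation. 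The symmetry $c^n_k=c^n_{n-k}$ translates into $\int x^k(1-x)^{n-k}\lambda_c(dx)=\int x^{n-k}(1-x)^k\lambda_c(dx)$ for all $k\leq n$, i.e. $\lambda_c$ and its push-forward under $x\mapsto 1-x$ have the same moments, hence (moments determine measures on $[0,1]$) $\lambda_c$ is symmetric. The converse direction is a direct check: if $c^I_S$ is given by \eqref{eq:proj} with $\lambda_c$ symmetric, then nonnegativity, the normalization (expand $(x+(1-x))^{|I|}=1$ and integrate), the symmetry, and the projectivity \eqref{mainc} (expand $(x+(1-x))^{|J|}$) are all immediate, so $\I^c$ is an intricacy by Proposition \ref{prop:product}.

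For part (2), I would interpret \eqref{eq:proj} probabilistically: with $W_c\sim\lambda_c$ and $(Y_i)$ i.i.d. uniform independent of $W_c$, conditionally on $W_c=x$ the events $\{i\in\z\}=\{Y_i\geq x\}$ are independent each with probability $1-x$, so $\bbP(\z\cap I=S\mid W_c=x) = (1-x)^{|S|}x^{|I|-|S|}$; integrating over $\lambda_c$ gives $\bbP(\z\cap I=S)=\int (1-x)^{|S|}x^{|I|-|S|}\lambda_c(dx)$, which equals $c^I_S$ since $\lambda_c$ is symmetric (so one may replace $x$ by $1-x$). Conversely \eqref{eq:proj2} for all $I,S$ determines the finite-dimensional laws of $\z$, hence — reading off the case $|I|$ arbitrary — recovers \eqref{eq:proj}. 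For part (3), uniqueness of $\lambda_c$ follows from Lemma \ref{lem:uniq-coef} (the data $\I^c(X)$ determine the $c^I_S$, hence the moment array, hence $\lambda_c$ by uniqueness in the Hausdorff moment problem); and the identity $\I^c(X)=\E(\MI(\z\cap I))$ is just the definition $\I^c(X)=\sum_{S\subset I}c^I_S\,\MI(X_S,X_{S^c})$ rewritten via \eqref{eq:proj2}, using that $X$ is independent of $\z$ and that by symmetry of $\lambda_c$ one has $\bbP(\z\cap I=S)=\bbP(\z\cap I=S^c)$ so the roles of $S$ and $S^c$ in $\MI$ are harmless.

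The main obstacle is the moment-problem step: one must verify that the recursion plus nonnegativity really does force complete monotonicity (equivalently, that all the finite-dimensional measures assembled from the $c^n_k$ are genuine consistent probability measures, so that de Finetti applies) rather than merely a quasi-measure or a signed measure. Concretely, I would show that for each fixed $n$ the iterated backward differences $(-1)^j\Delta^j$ applied to $k\mapsto c^{n+k}_0$ equal $c^{n+j}_j\geq0$ — this is where the recursion is used repeatedly — which is exactly Hausdorff's criterion; symmetry then comes along essentially for free. Everything else (normalization, projectivity, the conditional-independence computation in part (2), uniqueness via Lemma \ref{lem:uniq-coef}) is routine.
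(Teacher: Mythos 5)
Your proposal is correct and follows essentially the same route as the paper: reduce via Proposition \ref{prop:product} to a projective, symmetric, size-dependent array $(c^n_k)$, verify complete monotonicity by showing that iterated differences of the edge coefficients are again coefficients (hence nonnegative), invoke the Hausdorff moment theorem to get a unique $\lambda_c$ whose symmetry follows from $c^n_k=c^n_{n-k}$ and moment uniqueness, and then do the direct converse check and the conditional-independence computation for the Kingman-type representation. The only (cosmetic) difference is that you phrase the moment step in de Finetti language and you are slightly more explicit than the paper about the harmless $S\leftrightarrow S^c$ swap in the conditional probability $\bbP(\z\cap I=S\mid W_c)$, which is absorbed by the symmetry of $\lambda_c$.
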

\begin{rem}\label{kingman}
{\rm The definition \eqref{explo} of the random set $\z$ is a particular
case of the so-called {\it Kingman paintbox} construction,  see \cite[\S 2.3]{bertoin}. In this setting,
it yields a random exchangeable partition of $\N^*$ into a subset $\z$ and its
complement, each with asymptotic density a.s. equal to $W_c$, respectively
$1-W_c$.
Therefore it is natural to expect a similar probabilistic representation
for coefficients $(c_\pi)_\pi$ of exchangeable and weakly additive generalized functionals defined in
\eqref{kin1} and \eqref{kin2}. 
}
\end{rem}
\noindent
After the proof of the proposition we give the measures
$\mu,\mu^U,\mu^p$ representing respectively
$\I$, $\I^U$ and $\I^p$.
We start with the following
\begin{lemma}\label{1916}
Let $\SoC(\N^*)$ be the set of systems of coefficients of intricacies.
Let $\mathcal P\mathcal S([0,1])$ be the set of symmetric probability measures $\lambda$ on $[0,1]$. Then, the map $\lambda\mapsto c$ defined from $\mathcal P\mathcal S([0,1])$ to $\SoC(\N^*)$
according to ($n:=|I|$, $k:=|S|$):
\begin{equation}\label{exploo}
c^I_S = c^n_k = \int_{[0,1]} x^k(1-x)^{n-k}\, \lambda(dx),
 \qquad   \forall S\subset I\subset\subset\N^*,
\end{equation}
is a bijection.
\end{lemma}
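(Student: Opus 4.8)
The statement to prove is Lemma \ref{1916}: the map $\lambda \mapsto c$ given by \eqref{exploo} is a bijection from $\mathcal P\mathcal S([0,1])$ onto $\SoC(\N^*)$, the set of coefficient systems of intricacies. My plan is to verify three things in turn: (i) the map is well-defined, i.e. for every symmetric $\lambda$ the numbers $c^n_k := \int_{[0,1]} x^k(1-x)^{n-k}\,\lambda(dx)$ define a genuine coefficient system satisfying \eqref{eq:cond-CIS} and belonging to $\SoC(\N^*)$ (so the generated functional really is an intricacy); (ii) the map is injective; (iii) the map is surjective, i.e. every coefficient system in $\SoC(\N^*)$ arises this way.

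For (i), the nonnegativity $c^n_k \geq 0$ is immediate since the integrand is nonnegative on $[0,1]$. The normalization $\sum_{S\subset I} c^I_S = 1$ follows by writing $\sum_{k=0}^{n}\binom{n}{k} c^n_k = \int_{[0,1]} \sum_k \binom{n}{k} x^k(1-x)^{n-k}\,\lambda(dx) = \int_{[0,1]} 1\,\lambda(dx) = 1$, using the binomial theorem. The symmetry $c^I_{S^c} = c^I_S$ is exactly the statement that $\int x^k(1-x)^{n-k}\,\lambda(dx) = \int x^{n-k}(1-x)^{k}\,\lambda(dx)$, which holds because $\lambda$ is symmetric (apply the symmetry to $f(x) = x^k(1-x)^{n-k}$, noting $f(1-x) = x^{n-k}(1-x)^k$). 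Finally, to see that $c \in \SoC(\N^*)$ rather than just satisfying \eqref{eq:cond-CIS}, I invoke Proposition \ref{prop:product}: the coefficients depend only on $|I|,|S|$ (hence exchangeability) and they are projective since $\sum_{T\subset J} c^{I\sqcup J}_{S\sqcup T} = \int_{[0,1]} x^{|S|}(1-x)^{|I|-|S|}\sum_{T\subset J} x^{|T|}(1-x)^{|J|-|T|}\,\lambda(dx) = \int_{[0,1]} x^{|S|}(1-x)^{|I|-|S|}\,\lambda(dx) = c^I_S$, again by the binomial theorem; so by Proposition \ref{prop:product}(1)--(2) the functional is exchangeable and weakly additive.

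For (ii), injectivity: if $\lambda, \lambda'$ are symmetric and give the same coefficients, then in particular $\int_{[0,1]} x^k(1-x)^{n-k}\,\lambda(dx) = \int_{[0,1]} x^k(1-x)^{n-k}\,\lambda'(dx)$ for all $0\leq k\leq n$. The polynomials $x^k(1-x)^{n-k}$, as $n$ ranges over $\N$ and $0\leq k\leq n$, span the space of all polynomials (for fixed $n$ they already span degree $\leq n$ polynomials, since the Bernstein-type basis $\{x^k(1-x)^{n-k}\}_{k=0}^n$ is a basis of that space). Hence $\lambda$ and $\lambda'$ have the same moments, and since they are probability measures on the compact interval $[0,1]$, the moment problem is determinate, so $\lambda = \lambda'$.

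For (iii), surjectivity is where the real content lies and is the main obstacle. Given $c \in \SoC(\N^*)$ — so that $\I^c$ is an intricacy, meaning $c$ is exchangeable and projective — I must produce a symmetric probability measure $\lambda$ on $[0,1]$ realizing \eqref{exploo}. By exchangeability I may write $c^n_k$ for $c^I_S$ with $|I|=n$, $|S|=k$. The projectivity relation \eqref{mainc}, specialized to $|J|=1$, reads $c^n_k = c^{n+1}_k + c^{n+1}_{k+1}$, which is precisely the backward recursion characterizing a Hausdorff moment sequence: setting $a^n_k := \binom{n}{k} c^n_k$, one checks this becomes the statement that $(\,c^n_0\,)_{n\geq 0}$, or more precisely the two-index array, is completely monotone. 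Concretely, I would consider the sequence $m_n := $ (a suitable normalization making $c^n_0 = \int (1-x)^n\,\lambda(dx)$ plausible) and verify that the finite differences $(-1)^j \Delta^j$ applied to it are all nonnegative, using nonnegativity of the $c^n_k$ together with the recursion; Hausdorff's theorem then yields a probability measure $\lambda$ on $[0,1]$ with $\int (1-x)^n \,\lambda(dx) = c^n_0$. Then the recursion $c^n_k = c^{n+1}_k + c^{n+1}_{k+1}$ propagates the identity \eqref{exploo} from $k=0$ to all $k$ by induction on $n-k$ (or directly: the two arrays $(c^n_k)$ and $(\int x^k(1-x)^{n-k}\,\lambda(dx))$ satisfy the same recursion and agree on the "boundary" $k=0$, hence coincide). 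Symmetry of $\lambda$ follows from $c^n_k = c^n_{n-k}$: these force $\int x^k(1-x)^{n-k}(\lambda - \tilde\lambda)(dx) = 0$ for all $n,k$, where $\tilde\lambda$ is the pushforward of $\lambda$ under $x\mapsto 1-x$, and then the determinacy argument from (ii) gives $\lambda = \tilde\lambda$. The delicate point to get right is the bookkeeping that turns projectivity plus nonnegativity into complete monotonicity in the precise form Hausdorff's theorem requires; I expect that to be the bulk of the work, and it may be cleaner to phrase it probabilistically via de Finetti / the Kingman paintbox (Remark \ref{kingman}) rather than through raw difference calculus.
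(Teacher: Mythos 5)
Your proposal is correct and follows essentially the same route as the paper: the converse direction via the binomial theorem, injectivity via determinacy of the Hausdorff moment problem, and surjectivity via the one-step projectivity recursion $c^n_k=c^{n+1}_k+c^{n+1}_{k+1}$ combined with the complete-monotonicity criterion. The ``delicate bookkeeping'' you anticipate is in fact a one-line induction: with $m_n:=c^n_n$ the recursion gives $(D^km)_n=c^{n+k}_n\in[0,1]$ directly, after which the same recursion propagates the representation from $k=0$ to all $k$, exactly as you sketch.
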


\begin{proof}[Proof of Lemma \ref{1916}]
We first show that for an exchangeable weakly additive $\I^c$,
there exists a probability measure $\lambda$ on $[0,1]$ such that
\begin{equation}\label{pages}
c^{n+k}_n = \int_{[0,1]} x^n(1-x)^k\, \lambda(dx), \qquad n\geq 1, \, k\geq 0
\end{equation}
i.e., the main claim of the Lemma, up to a convenient renumbering.
We need the following classical moment result, see e.g. \cite[VII.3]{moment}.
\begin{lemma}
Let $(a_n)_{n\geq 1}$ be a sequence of numbers in $[0,1]$.
We define $(Da)_n:=a_n-a_{n+1}$, $n\geq 1$. There exists a probability
measure $\lambda$ on $[0,1]$ such that $a_n=\int x^n\,\lambda(dx)$
if and only if
\[
(D^ka)_n \geq 0, \qquad \forall \, n\geq1,\;\forall k\geq 1.
\]
Moreover such $\lambda$ is unique.
\end{lemma}
\noindent
Remark that, setting $N=|I|$ and $M=|J|$, projectivity is equivalent to
\begin{equation}\label{pages2}
c^N_k = \sum_{\ell=0}^M c^{M+N}_{k+\ell} \, \binom{M}{\ell},
\qquad \forall \ 0\leq k\leq N.
\end{equation}
For $M=1$ we obtain
\begin{equation}\label{cc}
c^{N+1}_k+c^{N+1}_{k+1}=c^N_k, \qquad \forall \ 0\leq k\leq N.
\end{equation}
Let us set $m_n:=c^n_n$.
One proves easily by \eqref{cc} and recurrence on $k$ that
\[
(D^km)_n = c^{n+k}_n\in[0,1], \qquad \forall \ k,n\geq 1.
\]
Therefore $(m_n)_{n\geq1}$ defines a unique measure $\lambda$
satisfying \eqref{pages} for $k=0$. \eqref{pages} for general $k$
follows by induction from:
\[
 \begin{split}
   c_n^{n+k+1} & =c_n^{n+k}-c_{n+1}^{n+k+1}
          = \int_{[0,1]} \left[x^n(1-x)^k-x^{n+1}(1-x)^k \right] d\lambda
          \\ & = \int_{[0,1]} x^n(1-x)^{k+1} \, d\lambda.
\end{split}
\]
Thus $\lambda$ is the unique solution to the claim of
the Lemma. This uniqueness together with $c^n_k=
c^n_{n-k}$, implies that $\lambda$ is symmetric. Thus
any intricacy defines a measure as claimed.

We turn to the converse, considering a symmetric measure $\lambda$ on $[0,1]$
and defining $c$ by means of \eqref{exploo}. The coefficients depending only
on the cardinalities, $\I^c$ is trivially exchangeable.
The symmetry of $\lambda$ yields immediately $c^n_k=c^n_{n-k}$, and the normalization condition
is given by
\[
\sum_{k=0}^N \binom{N}{k} \, c^N_k = \int_{[0,1]} \sum_{k=0}^N \binom{N}{k} \, x^k(1-x)^{N-k}\, \lambda(dx)
= 1,
\]
i.e. $c\in\mathcal C(\N^*)$.
To prove the projectivity of $c$, namely \eqref{pages2}, we compute:
\[
\begin{split}
\sum_{\ell=0}^M c^{M+N}_{k+\ell} \, \binom{M}{\ell} & =
\int_{[0,1]} \left[ \sum_{\ell=0}^M \binom{M}{\ell} \, x^\ell(1-x)^{M-\ell} \right]
x^k(1-x)^{N-k} \lambda(dx)
\\ & = \int_{[0,1]} x^k(1-x)^{N-k} \lambda(dx) = c^N_k.
\end{split}
\]
Thus \eqref{pages2} and projectivity follow. The Lemma is proved.
\end{proof}

\begin{proof}[Proof of Proposition \ref{1917}]
First, let $\I^c$ be an intricacy. Lemma \ref{1917}
yields a symmetric probability measure $\lambda_c$ on $[0,1]$
satisfying \eqref{exploo}. If $W_c$ be a random variable with law $\lambda_c$,
then \eqref{eq:proj} is equivalent to \eqref{exploo}.

Conversely, suppose that $c=(c^I_S)_{S\subset I}$ has the form
\eqref{eq:proj} for some probability $\mathbb P$
defined by $W_c,Y_1,Y_2,\dots$ as in the statement.
Obviously $c^I_S\geq0$ and $\sum_{S\subset I} c^I_S=1$.
$c^I_S=c^I_{S^c}$ follows from the symmetry of $W_c$.
Thus $c$ is a system of coefficients.
Exchangeability of $c$ follows from exchangeability of the random
variables $\un{(Y_i\geq W_c)}$, $i\in I$.
By \eqref{eq:proj} we know that
\[
c^I_S = c^{|I|}_{|S|} = \E\left((1-W_c)^{|I\backslash S|} \, W_c^{|S|}\right) =
\int_{[0,1]} x^{|S|}(1-x)^{|I\backslash S|}\, \lambda_c(dx).
\]
Therefore, by Lemma \ref{1916} the functional $\I^c$ is an intricacy.

Let now $W_c,Y_1,Y_2,\dots$ be defined as in point 2 of the statement
and $\cZ$ defined by \eqref{explo}.
Each $i\in\N^*$ belongs to the random set $\cZ$ if and only if $Y_i\geq W_c$.
Conditionally on $W_c$, the probability of $\{Y_i\geq W_c\}$
is therefore $1-W_c$. As the variables $Y_1,Y_2,\dots$ are independent:
\[
\mathbb P(\cZ\cap I=S \, | \, W_c)=(1-W_c)^{|I\backslash S|} \, W_c^{|S|}.
\]
Averaging over the values of $W_c$ we obtain
\[
\mathbb P(\cZ\cap I=S)=\E\left((1-W_c)^{|I\backslash S|} \, W_c^{|S|}\right)
\]
and therefore \eqref{eq:proj} and \eqref{eq:proj2} are equivalent.
The last assertion follows from Lemma \ref{1916} and from \eqref{eq:proj2}.
The Proposition is proved.
\end{proof}

\subsection{Examples of intricacies}

We show that the Edelman-Sporns-Tononi neural complexity \eqref{est}
and the uniform and $p$-symmetric intricacies correspond
to natural probability laws on $[0,1]$. In particular, they are
weakly additive and really intricacies:
\begin{lemma}\label{1012}
In the setting of Lemma \ref{1916}
\begin{enumerate}
\item If $W_c$ is uniform on $[0,1]$ then $\I^c$ is the Edelman-Sporns-Tononi
neural complexity \eqref{est}.
\item If $W_c$ is uniform on $\{p,1-p\}$  then $\I^c$ is
the $p$-symmetric intricacy $\I^p$; in the case $p=1/2$,
$W_c=\frac12$ a.s. yields the uniform intricacy $\I^U$.
\end{enumerate}
\end{lemma}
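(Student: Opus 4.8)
The plan is to prove both assertions by directly evaluating the coefficients $c^I_S$ produced by formula \eqref{exploo} of Lemma \ref{1916} for the two measures in question and matching them against the definitions of $\I$, $\I^p$ and $\I^U$. Since Lemma \ref{1916} already establishes that $\lambda\mapsto c$ is a bijection from symmetric probability measures on $[0,1]$ to $\SoC(\N^*)$, it suffices to identify the resulting coefficient systems; the associated mutual information functional is then, by definition, the claimed intricacy, and the exhibited measure is its (unique) probabilistic representation.

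For the first assertion I would take $\lambda_c$ to be Lebesgue measure on $[0,1]$, which is visibly symmetric, and compute, writing $n:=|I|$ and $k:=|S|$,
\[
c^I_S=\int_0^1 x^{k}(1-x)^{n-k}\,dx=\frac{k!\,(n-k)!}{(n+1)!}=\frac{1}{(n+1)\binom{n}{k}}.
\]
This is exactly the coefficient system \eqref{ET} defining the Edelman-Sporns-Tononi neural complexity, so $\I^c=\I$ by \eqref{est}. The only non-routine ingredient is the classical Beta integral $\int_0^1 x^{k}(1-x)^{n-k}\,dx=k!(n-k)!/(n+1)!$, which follows by repeated integration by parts (and which also appears as Theorem \ref{main1}(3)).

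For the second assertion I would take $\lambda_c=\tfrac12(\delta_p+\delta_{1-p})$, symmetric for every $p\in(0,1)$, and compute
\[
c^I_S=\int_{[0,1]}x^{k}(1-x)^{n-k}\,\lambda_c(dx)=\tfrac12\left(p^{|S|}(1-p)^{|I\setminus S|}+(1-p)^{|S|}p^{|I\setminus S|}\right),
\]
which is precisely the definition of the $p$-symmetric intricacy $\I^p$, so $\I^c=\I^p$. For $p=1/2$ this reduces to $c^I_S=2^{-|I|}$, the uniform intricacy $\I^U$; equivalently, $W_c=\tfrac12$ a.s. has law $\delta_{1/2}$ (note $\tfrac12(\delta_{1/2}+\delta_{1/2})=\delta_{1/2}$) and $\int_{[0,1]}x^{k}(1-x)^{n-k}\,\delta_{1/2}(dx)=2^{-n}$.

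There is essentially no real obstacle: both parts are elementary integral evaluations. The only points worth making explicit are that matching the coefficient systems is enough to conclude (since \eqref{est} and the definitions already present these functionals with explicit coefficients), and that by the bijection of Lemma \ref{1916} the measures produced above are the unique symmetric laws representing $\I$, $\I^p$ and $\I^U$, which in particular confirms that these three functionals are genuinely exchangeable and weakly additive, hence intricacies.
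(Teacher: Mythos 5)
Your proposal is correct and follows essentially the same route as the paper: both evaluate the coefficients $c^I_S$ via formula \eqref{exploo} for the given measures and match them against the explicit coefficient systems defining $\I$, $\I^p$ and $\I^U$ (the paper establishes the Beta integral identity by an induction in $j$ rather than by citing it directly, but this is the same elementary computation). No gaps.
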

\begin{proof}
Let $W_c$ be uniform on $[0,1]$. Then
\[
\begin{split}
\bbP(\z\cap I=\{1,\ldots,k\})
& = \bbP(Z_1=\cdots=Z_k=1, \,Z_{k+1}=\cdots=Z_N=0)
\\ & =\int_{[0,1]} x^k (1-x)^{N-k}\, dx =: a(k,N-k).
\end{split}
\]
We claim now that for all $k\geq 1$  and $j\geq 0$
\[
a(k,j) = \frac{j!}{(k+1)\cdots (k+j+1)} =
\frac1{(k+j+1) \, \binom{k+j}{k}},
\]
i.e., the Edelman-Sporns-Tononi coefficient $c^{k+j}_j$.

Indeed, for $j=0$ this reduces to $\int_0^1 x^k\, dx =1/(k+1)$.
To prove the general case, one fixes $k$ and uses recurrence on $j$. Indeed,
suppose we have the result for $j\geq 0$. Then
\[
\begin{split}
& \int_0^1 x^k(1-x)^{j+1}\, dx 
= \int_0^1 x^k(1-x)^j\, dx - \int_0^1 x^{k+1}(1-x)^j\, dx
 \\ & = \frac1{(k+j+1)\binom{k+j}{k}}-\frac1{(k+j+2)\binom{k+j+1}{k+1}}
= \frac1{(k+j+2)\binom{k+j+1}{k}}.
\end{split}
\]
If $W_c$ is uniform over $\{p,1-p\}$ then
\[
\int_{[0,1]} x^k(1-x)^{N-k}\, \frac12(\delta_{p}+\delta_{1-p})(dx) = \frac12(p^k(1-p)^{N-k}+(1-p)^kp^{N-k}),
\]
which is the coefficient $c^N_k$ of $\I^p$.
\end{proof}

\subsection{Further properties}
We deduce some useful facts from the above representation.

\begin{lemma}\label{alors}
The following are equivalent for an intricacy $\I^c$
with associated measure $\lambda_c$ as in Lemma \ref{1916}.
\begin{enumerate}
\item $\I^c$ is non-null, i.e. $c^N_k>0$ for at least one choice of
$N\geq 2$ and $1\leq k< N$;
\item  $c^N_k>0$ for all $N\geq 2$ and $1\leq k\leq N-1$;
\item $\lambda_c(]0,1[)>0$.
\end{enumerate}
\end{lemma}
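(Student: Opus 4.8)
The plan is to prove the chain of equivalences $(2)\Rightarrow(1)\Rightarrow(3)\Rightarrow(2)$, using the probabilistic representation $c^N_k=\int_{[0,1]} x^k(1-x)^{N-k}\,\lambda_c(dx)$ from Lemma \ref{1916}. The implication $(2)\Rightarrow(1)$ is immediate since (2) is a strengthening of (1): if all coefficients with $1\le k\le N-1$ are strictly positive, then in particular at least one such coefficient is positive.

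Next I would prove $(1)\Rightarrow(3)$ by contraposition. Suppose $\lambda_c(]0,1[)=0$, i.e.\ $\lambda_c$ is supported on $\{0,1\}$; by symmetry of $\lambda_c$ it must be $\frac12(\delta_0+\delta_1)$. Then for any $N\ge 2$ and $1\le k\le N-1$ the integrand $x^k(1-x)^{N-k}$ vanishes at both $x=0$ and $x=1$, so $c^N_k=0$. Hence $\I^c$ is null, contradicting (1). This gives $(1)\Rightarrow(3)$.

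Finally, for $(3)\Rightarrow(2)$: assume $\lambda_c(]0,1[)>0$. Fix any $N\ge 2$ and $1\le k\le N-1$. The function $x\mapsto x^k(1-x)^{N-k}$ is continuous, nonnegative, and strictly positive on the open interval $]0,1[$. Since $\lambda_c$ puts positive mass on $]0,1[$, there is some $\eta>0$ with $\lambda_c([\eta,1-\eta])>0$, on which the integrand is bounded below by a positive constant; therefore $c^N_k=\int_{[0,1]} x^k(1-x)^{N-k}\,\lambda_c(dx)>0$. This establishes (2) and closes the cycle.

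None of the three steps presents a real obstacle; the only point requiring a touch of care is invoking the symmetry of $\lambda_c$ in $(1)\Rightarrow(3)$ to pin down a measure on $\{0,1\}$ as $\frac12(\delta_0+\delta_1)$ — but even this is not strictly necessary, since vanishing of the integrand at both endpoints already forces $c^N_k=0$ regardless of how the mass is split between $0$ and $1$. So the proof is essentially a direct computation with the representing measure, and the ``hard part'' is merely bookkeeping the direction of each implication.
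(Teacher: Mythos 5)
Your proposal is correct and follows essentially the same route as the paper: both rest on the representation $c^N_k=\int_{[0,1]}x^k(1-x)^{N-k}\,\lambda_c(dx)$ together with the observation that the integrand vanishes exactly at $\{0,1\}$ and is strictly positive on $]0,1[$, then chain the three implications. The paper states this in one line; your version merely spells out the same argument in more detail.
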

\begin{proof}
We have:
\[
c^n_j = \int_{[0,1]} x^j(1-x)^{n-j}\, \lambda_c(dx)
\]
with $x^j(1-x)^{n-j}$ zero exactly at $x\in\{0,1\}$ whenever
$0<j<n$ and strictly positive on $]0,1[$. Thus
(1)$\implies$(3)$\implies$(2)$\implies$(1).
\end{proof}

\begin{lemma}\label{bert2}
If $\I^c$ is non-null, then $\I^c(X)=0$ for a $X\in\cX(d,N)$
if and only if $X=(X_1,\ldots,X_N)$ is an independent family.
\end{lemma}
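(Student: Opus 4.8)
The plan is to prove the two implications directly: the ``if'' direction is essentially Example~\ref{gb}, and the ``only if'' direction rests on the positivity of coefficients provided by Lemma~\ref{alors}.

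For the easy direction, suppose $X=(X_1,\dots,X_N)$ is an independent family. Then for every $S\subset I$ the subfamilies $X_S$ and $X_{S^c}$ are built from disjoint blocks of independent coordinates, hence are themselves independent, so $\MI(X_S,X_{S^c})=0$; summing, $\I^c(X)=\sum_{S\subset I} c^I_S\,\MI(S)=0$.

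For the converse, assume $\I^c(X)=0$. Since each $c^I_S\ge 0$ and each $\MI(S)\ge 0$, vanishing of the sum forces $c^I_S\,\MI(S)=0$ for every $S\subset I$. Because $\I^c$ is non-null, Lemma~\ref{alors} gives $c^N_k>0$ for all $1\le k\le N-1$, equivalently $c^I_S>0$ whenever $\emptyset\ne S\ne I$; hence $\MI(X_S,X_{S^c})=0$, i.e.\ $X_S$ and $X_{S^c}$ are independent, for every such $S$. (If $N=1$ there is nothing to prove, a single variable being trivially an independent family.)

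It then remains to upgrade these ``bipartite'' independences to independence of the whole family, for which I would use only the prefix splits: for $2\le k\le N$ take $S=\{1,\dots,k-1\}$, a nonempty proper subset of $I$, so $X_{\{1,\dots,k-1\}}$ is independent of $X_{\{k,\dots,N\}}$, and in particular of its marginal $X_k$. Therefore $\H(X_k\mid X_1,\dots,X_{k-1})=\H(X_k)$ for each $k$, and the chain rule gives $\H(X)=\sum_{k=1}^N\H(X_k\mid X_1,\dots,X_{k-1})=\sum_{k=1}^N\H(X_k)$, which is exactly the assertion that $(X_1,\dots,X_N)$ is an independent family. There is no genuine obstacle here; the only point worth isolating is the appeal to Lemma~\ref{alors}, which guarantees that all the coefficients attached to the splits we use are strictly positive, so that $\I^c(X)=0$ really does kill every relevant mutual information.
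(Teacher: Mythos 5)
Your proof is correct and follows essentially the same route as the paper's: the non-null hypothesis plus Lemma~\ref{alors} forces $\MI(X_S,X_{S^c})=0$ for every nontrivial $S$, and the prefix splits with the chain rule give $\H(X)=\sum_i\H(X_i)$, i.e.\ independence. Your explicit chain-rule computation is just the ``easy induction'' the paper leaves to the reader.
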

\begin{proof}
It is enough to show that:
 $
   \I^c(X)=0\iff \H(X)=\sum_{i\in I} \H(X_i).
 $
If $\I^c$ is non-null and $\I^c(X)=0$, then by Lemma \ref{alors} we have
$\MI(S)=0$ for all $S\subset I$ with $S\notin\{\emptyset,I\}$. Therefore
 $\H(X)=\H(X_S)+\H(X_{S^c})$ and an easy induction yields the claim.
\end{proof}

\begin{example}[Systems with intricacy proportional to $N$]\label{gb5a}
{\rm We generalize the result of Example \ref{gb5} from $\I^U$
to a non-null intricacy $\I^c$. Considering the same system $X$ as in Example \ref{gb5}, we
get by Proposition \ref{1917}
 \[
 \begin{split}
    \frac{\I^c(X)}{\log d} & = \sum_{S\subset I} c^I_S \, |\Delta_S|
      = \mathbb E\left(|\Delta_{\cZ\cap I}|\right)
      \\ & =\sum_{k=1}^{N-1} \mathbb P(\un{\cZ}(k)\ne \un{\cZ}(k+1))
      = (N-1) \, \mathbb P(\un{\cZ}(1)\ne \un{\cZ}(2)).
 \end{split}
 \]
By the probabilistic representation \eqref{eq:proj} through a random
variable $W_c$ with law $\lambda_c$ on $[0,1]$,
\begin{equation}\label{ppag}
 \kappa_c := \mathbb P(\un{\cZ}(1)\ne \un{\cZ}(2)) = \int_{[0,1]} 2x(1-x)\, \lambda_c(dx)
    \in\, ]0,1/2].
\end{equation}
Then we have obtained a system $X\in\cX(d^2,N)$ such that
\begin{equation}\label{clari}
 \I^c(X) = \frac{\kappa_c}2\, (N-1)\log d^2. \qquad\qquad \square
\end{equation}
 }
\end{example}

\section{Bounds for maximal intricacies}
\label{maximum}

In this section we prove Theorem \ref{thm:main}. We recall the
definition \eqref{ppag} for a non-null intricacy $\I^c$
\begin{equation}\label{ppagg}
\kappa_c=2\int_{[0,1]} x(1-x)\, \lambda_c(dx)= 2c^2_1>0.
\end{equation}
Recall that $\I^c(d,N)$ and $\I^c(d,N,x)$, defined in \eqref{supac0}
and \eqref{supac1}, denote the maximum of $\I^c$ over $\cM(d,N)$,
respectively over $\{\mu\in\cM(d,N): \H(\mu)=xN\log d\}$. We are
going to show the following
\begin{prop}\label{bubi}
Let $\I^c$ be a non-null intricacy and $d\geq2$. Then for all $N\geq 2$
\begin{equation}\label{true}
\frac{\kappa_c\log d}2 \left(1-\frac1N\right) \leq
\frac{\I^c(d,N)}N \leq \frac{\log d}2,
\end{equation}
and for any $x\in[0,1]$
\begin{equation}\label{truee}
\left[x\wedge(1-x)\right]\, \kappa_c\, \log d \left(1-\frac1N\right) \leq
\frac{\I^c(d,N,x)}N \leq \frac{1}2 \log d,
\end{equation}
where $\kappa_c>0$ is defined in \eqref{ppagg}.
\end{prop}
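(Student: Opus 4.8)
The plan is to establish the upper and lower bounds separately, noting that the upper bounds are essentially immediate from Lemma \ref{bert}, while the lower bounds require exhibiting explicit systems, which is exactly what Examples \ref{gb5a} and \ref{gb3.5} provide. For the upper bound in \eqref{true}, Lemma \ref{bert} already gives $\I^c(X)\leq \frac{N}{2}\log d$ for every $X\in\cX(d,N)$, hence $\I^c(d,N)\leq\frac{N}{2}\log d$; dividing by $N$ gives the claim. The same bound restricted to the subset of laws with $\H(\mu)=xN\log d$ gives the upper bound in \eqref{truee} for free. So the content is entirely in the two lower bounds.

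For the lower bound in \eqref{true}, I would invoke Example \ref{gb5a}: that construction produces, for each $N\geq 2$, a system $X\in\cX(d^2,N)$ with $\I^c(X)=\frac{\kappa_c}{2}(N-1)\log d^2 = \kappa_c(N-1)\log d$. However, I must be careful about the alphabet size: the Proposition is stated for a fixed $d$, whereas Example \ref{gb5a} builds a system over $d^2$ letters. The fix is to apply Example \ref{gb5a} with $d$ replaced by $\sqrt{d}$ when $d$ is a perfect square, and in general to note (as Remark 3 after Theorem \ref{thm:main} indicates, and as one can arrange directly) that one may take variables valued in any alphabet of size $\geq 2$; alternatively, since Example \ref{gb5}'s construction uses $X_i = Y_i + dY_{i+1}$ with $Y_i$ uniform on $\{0,\dots,d-1\}$, one can equally run it with $Y_i$ uniform on a two-point set inside $\{0,\dots,d-1\}$ and with the "carry" encoding landing inside $\{0,\dots,d-1\}$ after possibly enlarging — cleanest is to observe that the bound $\I^c(d,N)\geq \I^c(2,N)$ and it suffices to prove the lower bound for $d$ replaced by $\lfloor\sqrt d\rfloor^2$, which only weakens the constant by a bounded factor; but in fact the intended reading is that Example \ref{gb5a} directly yields a system in $\cX(d,N)$ after renaming $d^2\rightsquigarrow d$, giving $\I^c(d,N)\geq \frac{\kappa_c}{2}(N-1)\log d = \frac{\kappa_c\log d}{2}(N-1)$, hence $\frac{\I^c(d,N)}{N}\geq\frac{\kappa_c\log d}{2}\left(1-\frac1N\right)$.

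For the lower bound in \eqref{truee} I would combine two ideas. Fix $x\in[0,1]$ and write $x\wedge(1-x)=:m$. Starting from the system $X\in\cX(d,N)$ of Example \ref{gb5a} (with $d^2$ renamed to $d$) whose entropy is exactly $\frac N2\log d$ (the "half-entropy" case, $x=1/2$), I want to rescale its entropy down to $xN\log d$ while retaining intricacy of order $m\kappa_c N\log d$. The mechanism is the one used in Example \ref{gb3.5}: feed each coordinate through an auxiliary randomization that independently, with a suitable probability, either keeps the coordinate or replaces it by a fresh independent uniform symbol (to raise entropy toward $N\log d$) or collapses it to a constant along a sub-block (to lower entropy toward $0$). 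Concretely, restrict the Example \ref{gb5a} construction to a sub-block of size $\approx 2mN$ coordinates and make the remaining coordinates deterministic (for $x\leq 1/2$), or make them i.i.d.\ uniform (for $x\geq 1/2$); weak additivity of $\I^c$ together with Example \ref{gb} (independent blocks contribute $0$) shows the intricacy of the full system equals that of the active block, namely $\geq \frac{\kappa_c}{2}(\lceil 2mN\rceil-1)\log d\geq m\kappa_c\log d\,(N-\text{const})$, while the entropy is tuned to exactly $xN\log d$ by a continuity/intermediate-value argument on the randomization parameter, exactly as in Example \ref{gb3.5}. Dividing by $N$ and absorbing the $O(1)$ correction into the factor $(1-1/N)$ gives \eqref{truee}.

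The main obstacle is bookkeeping in the last step: one must check that the block decomposition can be arranged so that (i) the active block genuinely has size linear in $N$ with the right proportionality constant $2m$, (ii) the total entropy can be dialed to precisely $xN\log d$ (not merely approximately) using continuity of entropy on the simplex, and (iii) the weak-additivity splitting is legitimate, i.e.\ the blocks are genuinely independent so that $\I^c$ of the union is the sum of the $\I^c$'s of the pieces (Proposition \ref{prop:product}(2)) and the deterministic/independent pieces contribute zero (Examples \ref{gb}, \ref{gb2}). None of these is deep, but getting the constant in front of $N$ to come out as exactly $x\wedge(1-x)$ rather than something smaller requires choosing the active block size to be as large as the entropy budget allows, which is $2mN$ for the half-entropy-per-site construction of Example \ref{gb5a}.
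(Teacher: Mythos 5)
Your overall architecture is the right one and matches the paper's: the upper bounds in both \eqref{true} and \eqref{truee} are immediate from Lemma \ref{bert} (and from restricting the supremum), and the content is in exhibiting explicit systems for the lower bounds and exploiting weak additivity. But the construction you lean on is the wrong one, and this creates a genuine gap. Example \ref{gb5a} produces a system in $\cX(d^2,N)$, i.e.\ over an alphabet whose size is a perfect square; for a general $d\geq2$ (say $d=2$ or $d=3$) none of your proposed repairs produces a system in $\cX(d,N)$ with the \emph{exact} constant $\kappa_c(\log d)/2$: replacing $d$ by $\sqrt d$ only makes sense for square $d$, replacing it by $\lfloor\sqrt d\rfloor^2$ provably loses a constant factor (so does not prove \eqref{true} as stated), and the "renaming $d^2\rightsquigarrow d$" you settle on is not available — the construction $X_i=Y_i+dY_{i+1}$ intrinsically needs the alphabet to hold a pair of $Y$-symbols. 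The paper sidesteps this entirely: it takes the two-variable system of Example \ref{gb3.5}, which genuinely lives in $\cX(d,2)$, already has entropy exactly $x\log d^2=2x\log d$, and has intricacy $2\kappa_c[x\wedge(1-x)]\log d$; then it forms $M$ independent copies to get $N=2M$ variables, applies weak additivity, and for odd $N=2M+1$ appends one independent variable of entropy $x\log d$ (contributing zero intricacy) — which is precisely where the factor $(1-1/N)$ comes from. Finally \eqref{true} follows from \eqref{truee} by taking $x=1/2$, rather than by a separate construction.

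Two further points on your \eqref{truee} argument. First, tuning the entropy of a large active block to hit $xN\log d$ exactly, after the fact, is delicate because the perturbation used to adjust the entropy can also change the intricacy; the paper does the intermediate-value tuning once inside the two-variable system (where it costs nothing, since the intricacy of the pair is computed exactly as a function of the entropy) and then the product structure propagates both quantities exactly. Second, your bookkeeping does not actually deliver the constant $(1-1/N)$: a block of size $\lceil 2mN\rceil$ gives intricacy at least $m\kappa_c N\log d-\tfrac{\kappa_c}{2}\log d$, and $\tfrac{\kappa_c}{2}>m\kappa_c$ whenever $m<1/2$, so the claimed bound $m\kappa_c\log d\,(N-1)$ would not follow without further care. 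Replacing your block construction by "independent copies of the Example \ref{gb3.5} pair, plus one spare variable for odd $N$" fixes all of these issues at once.
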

\begin{proof}
The upper bound for $\I^c(d,N)/N$ follows from Lemma \ref{bert}.
We show now the lower bound for $\I^c(d,N,x)/N$. Let $x\in\,]0,1[$.
In example \ref{gb3.5} we have constructed a system $X=(X_1,X_2)\in\cX(d,2)$ with
\[
\H(X)=x\log d^2, \qquad \I^c(X)=2\kappa_c\, [x\wedge(1-x)]\log d>0.
\]
Let now $(Y_{2i+1})_{i\geq 0}$ an i.i.d. family of copies of $X_1$
and set $Y_{2(i+1)}:=Y_{2i+1}$ for all $i\geq 0$.
Then, for $M\geq 1$, $Y:=(Y_i)_{i=1,\ldots,2M}\in\cX(d,2M)$
is the product of $M$ independent copies of $(X_1,X_2)$ and
by weak additivity
\[
\I^c(Y) = M \, \I^c(X) = 2M \, \kappa_c\, [x\wedge(1-x)]\log d, \qquad \H(Y)=2Mx\log d.
\]
If $S$ is a $\{0,\ldots,d-1\}$-valued random variable independent of $Y$ with $\H(Z)=x\log d$, then
$Z:=(Y_1,\ldots,Y_{2M},S)\in\cX(d,2M+1)$ satisfies by weak additivity
\[
\I^c(Z) = \I^c(Y_1,\ldots,Y_{2M}) =  2M \, \kappa_c\, [x\wedge(1-x)]\log d,\qquad \H(Z)=(2M+1)x\log d.
\]
Setting $N=2M$, respectively $N=2M+1$, we obtain the upper bound for $\I^c(d,N,x)/N$. Taking the
supremum over $x\in[0,1]$ in \eqref{truee}, we obtain \eqref{true}.
\end{proof}

\subsection{Super-additivity}
We are going to prove that the maps $N\mapsto\I^c(d,N)$ and $N\mapsto\I^c(d,N,x)$ are
super-additive.
By Lemma \ref{assumed}, the suprema defining $\I^c(d,N)$ and $\I^c(d,N,x)$
are maxima. The measures achieving the first supremum are called
{\it maximal intricacy measures}.

%
\begin{lemma}
For any intricacy $\I^c$ and $d\geq 2$, the following limits exist. First,
\begin{equation}\label{omer}
    \I^c(d)= \lim_{N\to\infty} \frac{\I^c(d,N)}N = \sup_{N\geq 1} \frac{\I^c(d,N)}N  \in \ ]0,+\infty[
\end{equation}
and, for each $x\in \, ]0,1[$,
\begin{equation}\label{omerX}
       \I^c(d,x)=  \lim_{N\to\infty} \frac{\I^c(d,N,x)}N = \sup_{N\geq 1} \frac{\I^c(d,N,x)}N  \in \ ]0,+\infty[.
\end{equation}
\end{lemma}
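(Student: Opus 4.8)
The plan is to use the classical Fekete lemma on super-additive sequences, so the main work is to establish super-additivity of the sequences $N\mapsto\I^c(d,N)$ and $N\mapsto\I^c(d,N,x)$ together with the right boundedness so that the limit equals the supremum and is finite and positive.

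First I would prove super-additivity of $N\mapsto\I^c(d,N)$. Given $M,N\geq1$, pick maximizers $X\in\cX(d,M)$ and $Y\in\cX(d,N)$ achieving $\I^c(d,M)$ and $\I^c(d,N)$ (these exist by Lemma \ref{assumed}). Taking \emph{independent} copies of $X$ and $Y$ produces a system $(X,Y)\in\cX(d,M+N)$, and weak additivity (part of the definition of intricacy, or Proposition \ref{prop:product}) gives $\I^c(X,Y)=\I^c(X)+\I^c(Y)=\I^c(d,M)+\I^c(d,N)$; hence $\I^c(d,M+N)\geq\I^c(d,M)+\I^c(d,N)$. For the constrained version, the same construction works: if $\H(X)=xM\log d$ and $\H(Y)=xN\log d$, then by additivity of entropy over independent systems $\H(X,Y)=x(M+N)\log d$, so $(X,Y)$ is admissible for $\I^c(d,M+N,x)$ and the same inequality follows. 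By Fekete's subadditive (here: super-additive) lemma, $\lim_{N\to\infty}\I^c(d,N)/N=\sup_{N\geq1}\I^c(d,N)/N$ and likewise for the constrained sequence.

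Next I would pin down that these suprema lie in $\,]0,+\infty[$. Finiteness is immediate from Lemma \ref{bert}, which gives $\I^c(d,N)\leq (N/2)\log d$ and hence $\I^c(d,N)/N\leq (\log d)/2<\infty$, and the same bound applies to the constrained quantity. Positivity requires the intricacy to be non-null: one exhibits a single system with positive intricacy and then scales it up. For the unconstrained case, Example \ref{gb4} (or Example \ref{ex:smallN2}) provides a system with $\I^c>0$; alternatively, Example \ref{gb5a} already exhibits $X\in\cX(d^2,N)$ with $\I^c(X)=\tfrac{\kappa_c}2(N-1)\log d^2$, so $\I^c(d^2,N)/N\to (\kappa_c/2)\log d^2>0$ since $\kappa_c>0$ by Lemma \ref{alors}. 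A cleaner route uniform in $d$: Proposition \ref{bubi} already gives the two-sided bounds $\tfrac{\kappa_c\log d}{2}(1-1/N)\leq \I^c(d,N)/N\leq (\log d)/2$ and the analogous bounds for $\I^c(d,N,x)$ with constant $[x\wedge(1-x)]\kappa_c$, so letting $N\to\infty$ along the already-established limit forces $\I^c(d)\geq (\kappa_c/2)\log d>0$ and $\I^c(d,x)\geq [x\wedge(1-x)]\kappa_c\log d>0$ for $x\in\,]0,1[$.

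The one point needing a small argument is that for the constrained sequence $\I^c(d,N,x)$ the set $\{\mu\in\cM(d,N):\H(\mu)=xN\log d\}$ must be nonempty for the relevant $N$, so that $\I^c(d,N,x)$ is actually defined as a maximum; this follows from continuity of entropy on the connected simplex $\cM(d,N)$ together with the facts $\inf\H=0$ and $\sup\H=N\log d$, so every value in $[0,N\log d]$ is attained — and $xN\log d$ lies there for $x\in[0,1]$. The main (mild) obstacle is really bookkeeping: making sure the independent-product construction lands in the correct constraint class and that one invokes weak additivity only for genuinely independent systems; there is no deep difficulty, as all the needed ingredients (Fekete, weak additivity, Lemma \ref{bert}, and the examples establishing positivity) are already in place.
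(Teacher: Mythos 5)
Your proof is correct and follows essentially the same route as the paper: super-additivity of both sequences via independent products and weak additivity, Fekete's lemma, and the two-sided bounds of Proposition \ref{bubi} for finiteness and (for non-null intricacies) positivity. Your remark that positivity requires non-nullness is a fair observation, since that hypothesis is implicit in the lemma's placement within the proof of Theorem \ref{thm:main}.
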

\begin{proof}
We prove \eqref{omerX}, \eqref{omer} being similar and simpler. Fix $x\in \,]0,1[$.
For each $N\geq1$, let $a_N:=\I^c(d,N,x)$. We claim that this sequence
 is \emph{super-additive}, i.e.,
\[
a_{N+M} \geq a_N+a_M, \qquad \forall \ N,M\geq 1.
\]
Indeed, let $X^N$ and $X^M$ such that
\[
\begin{split}
\I^c(X^N)& =\I^c(d,N,x), \quad \H(X^N)=xN\log d,
\\ \I^c(X^M) & =\I^c(d,M,x), \quad \H(X^M)=xM\log d.
\end{split}
\]
Assume that $X^N$ and $X^M$ are independent. By weak-additivity
\[
\begin{split}
\I^c(X^N,X^M) & =\I^c(X^N)+\I^c(X^M),
\\ \H(X^N,X^M) & =\H(X^N)+\H(X^M)=x(N+M)\log d.
\end{split}
\]
Thus,
\[
\begin{split}
a_N+a_M & =\I^c(d,N,x)+\I^c(d,M,x)=\I^c\left(X^N\right)+\I^c\left(X^M\right)
\\ & =\I^c\left(X^N,X^M\right)\leq \I^c(d,N+M,x)=a_{N+M}.
\end{split}
\]
Moreover, by Proposition \ref{bubi}, we have
$\sup_{N\geq 1} a_N/N \leq (\log d)/2$. Therefore, by Fekete's Lemma
$a_N/N\to \sup_M a_M/M\leq (\log d)/2$ as $N\to+\infty$. Moreover, the limit
is positive by \eqref{truee}.
\end{proof}

\subsection{Adjusting Entropy}
To strengthen the previous result to obtain the second assertion
of Theorem \ref{thm:main},
we must adjust the entropy without significantly changing
the intricacy.

\begin{lemma}\label{lem:I-linear}
Let $X^{(1)},\dots,X^{(r)}\in\cX(d,N)$.
Let $U$ be a random variable over $\{1,\dots,r\}$,
independent of $\{X^{(1)},\dots,X^{(r)}\}$.
Let $Y:=X^{(U)}\in\cX(d,N)$, i.e., $Y=X^{(u)}$ whenever $U=u$. Then:
 \begin{equation}\label{eq:H-quasi-linear}
    0  \leq \H(Y_S) -    \sum_{u=1}^r \mathbb P(U=u) \, \H(X^{(u)}_S)
\leq \log r, \qquad \forall \, S\subset\{1,\ldots,N\},
 \end{equation}
 \begin{equation}\label{eq:H-quasi-linear-i}
    -\log r
      \leq \I^c(Y) -\sum_{u=1}^r \mathbb P(U=u) \, \I^c(X^{(u)}) \leq 2\log r.
 \end{equation}
\end{lemma}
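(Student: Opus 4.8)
The statement has two parts. The first inequality \eqref{eq:H-quasi-linear} is the standard "mixing increases entropy but only by at most $\log r$" estimate; the second \eqref{eq:H-quasi-linear-i} will be deduced from it using the formula \eqref{eq:I-as-H} expressing an intricacy as a linear combination of subsystem entropies.

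\textbf{Step 1: entropy of a mixture.} For \eqref{eq:H-quasi-linear}, observe that the pair $(Y_S,U)$ has the same law whether we first draw $U$ and then $Y_S=X^{(U)}_S$, so $\H(Y_S,U)=\H(U)+\sum_u\bbP(U=u)\H(X^{(u)}_S)$. Since conditioning reduces entropy and adding a variable increases it, $\H(Y_S)\le\H(Y_S,U)=\H(U)+\sum_u\bbP(U=u)\H(X^{(u)}_S)$, and $\H(U)\le\log r$ gives the upper bound. For the lower bound, $\H(Y_S)=\H(Y_S,U)-\H(U\mid Y_S)\ge \H(Y_S,U)-\H(U)=\sum_u\bbP(U=u)\H(X^{(u)}_S)$; alternatively just use concavity of entropy directly. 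Both arguments are routine and I would present whichever is shortest; they are recalled in the appendix of the paper.

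\textbf{Step 2: from entropy to intricacy.} By \eqref{eq:I-as-H}, for any system $Z\in\cX(d,N)$ one has $\I^c(Z)=2\sum_{S\subset I}c^I_S\H(Z_S)-\H(Z)$. Apply this to $Z=Y$ and to each $Z=X^{(u)}$, multiply the latter by $\bbP(U=u)$ and sum. Using \eqref{eq:H-quasi-linear} termwise (both for the sets $S$ appearing in the sum and for $S=I$, i.e.\ the full system), the "$2\sum_S c^I_S\H(\cdot_S)$" part of $\I^c(Y)$ differs from its average counterpart by a quantity in $[0,2\log r\sum_S c^I_S]=[0,2\log r]$ (here $\sum_S c^I_S=1$ is used), while the "$-\H(\cdot)$" part contributes a difference in $[-\log r,0]$. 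Combining, $\I^c(Y)-\sum_u\bbP(U=u)\I^c(X^{(u)})\in[-\log r,2\log r]$, which is exactly \eqref{eq:H-quasi-linear-i}.

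\textbf{Main obstacle.} There is no serious obstacle here; this lemma is purely a bookkeeping device. The only point requiring a little care is getting the constants right: one must remember the factor $2$ in front of $\sum_S c^I_S\H(\cdot_S)$ in \eqref{eq:I-as-H} (so the positive deviation is $2\log r$, not $\log r$) and note that the deviation of the $-\H(\cdot)$ term has the opposite sign, so the two do not simply add in absolute value — the lower bound stays at $-\log r$ rather than $-2\log r$. I would therefore write out the two-sided estimate explicitly rather than pass to absolute values prematurely.
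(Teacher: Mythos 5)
Your proof is correct and follows essentially the same route as the paper: both establish \eqref{eq:H-quasi-linear} by sandwiching $\H(Y_S)$ between $\H(Y_S\mid U)=\sum_u\bbP(U=u)\H(X^{(u)}_S)$ and $\H(Y_S,U)=\H(Y_S\mid U)+\H(U)$ with $\H(U)\le\log r$, and then deduce \eqref{eq:H-quasi-linear-i} from \eqref{eq:I-as-H}. Your Step 2 merely spells out the sign bookkeeping (the $+2\log r$ from the $2\sum_S c^I_S\H(\cdot_S)$ term versus the $-\log r$ from the $-\H(\cdot)$ term) that the paper leaves as "follows immediately," and does so correctly.
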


\begin{proof}
We first prove \eqref{eq:H-quasi-linear}. By \eqref{coco},
 $$
    \H(Y_S|\,U)\leq \H(Y_S) \leq \H(Y_S,U)=\H(Y_S|\,U)+\H(U).
 $$
Now $\H(U)\leq \log r$. \eqref{eq:H-quasi-linear} follows as:
 $$
    \H(Y_S|\,U) = \sum_{u=1}^r \mathbb P(U=u) \H(Y_S|\,U=u)
     = \sum_{u=1}^r \mathbb P(U=u) \H(X^{(u)}_S).
 $$
\eqref{eq:H-quasi-linear-i} follows immediately, using \eqref{eq:I-as-H} and \eqref{eq:H-quasi-linear}.
\end{proof}

\begin{lemma}\label{lem:adjust-H}
Let $0<x<1$ and $\eps>0$ and $\I^c$ be some non-null intricacy. Then there exists $\delta_0>0$
and $N_0<\infty$ with the following property for all $0<\delta<\delta_0$ and $N\geq N_0$.
For any $X\in\cX(d,N)$ such that $\left|\frac{\H(X)}{N\log d}-x\right|\leq\delta$,
there exists $Y\in\cX(d,N)$ satisfying:
 $$
    \H(Y)=x N\log d, \qquad |\I^c(Y)-\I^c(X)|\leq \eps N \log d.
 $$
\end{lemma}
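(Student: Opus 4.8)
The idea is to leave $X$ essentially alone and instead \emph{mix} it with one auxiliary system whose entropy sits a definite fraction of $N\log d$ away from the target value $xN\log d$, on the correct side. Mixing is the natural operation here because, by Lemma~\ref{lem:I-linear}, both the entropy and the intricacy of a two‑component mixture $X^{(U)}$ differ from the corresponding convex combinations only by the fixed additive amounts $\log 2$ and $2\log 2$, which are negligible against $N$. So if $\H(X)=xN\log d$ we take $Y=X$, and otherwise we split into two cases according to the sign of $\H(X)-xN\log d$.

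\textbf{The case $\H(X)<xN\log d$ (we must raise the entropy).} Let $X^{(1)}\in\cX(d,N)$ be a family of i.i.d.\ uniform variables, so $\H(X^{(1)})=N\log d>xN\log d$ and, by Example~\ref{gb}, $\I^c(X^{(1)})=0$. For $t\in[0,1]$ let $X_t$ be the system whose law is $(1-t)\,\mathrm{law}(X)+t\,\mathrm{law}(X^{(1)})$; equivalently $X_t=X^{(U_t)}$ with $U_t$ Bernoulli$(t)$ independent of $(X,X^{(1)})$. The path $t\mapsto\mathrm{law}(X_t)$ is affine, hence continuous, in the finite‑dimensional simplex, and $\H$ is continuous there, so $t\mapsto\H(X_t)$ is continuous, equals $\H(X)\le xN\log d$ at $t=0$ and $N\log d\ge xN\log d$ at $t=1$. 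By the intermediate value theorem pick $t^\ast$ with $\H(X_{t^\ast})=xN\log d$ and set $Y:=X_{t^\ast}$. The left inequality of \eqref{eq:H-quasi-linear} with $S=I$ gives $xN\log d=\H(Y)\ge(1-t^\ast)\H(X)+t^\ast N\log d$, whence $t^\ast\bigl(N\log d-\H(X)\bigr)\le xN\log d-\H(X)\le\delta N\log d$; since $N\log d-\H(X)\ge(1-x-\delta)N\log d\ge\tfrac{1-x}{2}N\log d$ once $\delta\le(1-x)/2$, we obtain $t^\ast\le 2\delta/(1-x)$. Finally \eqref{eq:H-quasi-linear-i}, together with $\I^c(X^{(1)})=0$ and $\I^c(X)\le\tfrac N2\log d$ from Lemma~\ref{bert}, yields
$$
   |\I^c(Y)-\I^c(X)|\ \le\ t^\ast\,\I^c(X)+2\log 2\ \le\ \frac{\delta}{1-x}\,N\log d+2\log 2 .
$$

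\textbf{The case $\H(X)>xN\log d$ (we must lower it).} Run the same argument with $X^{(1)}$ a deterministic system, so $\H(X^{(1)})=0<xN\log d$ and, by Example~\ref{gb2}, $\I^c(X^{(1)})=0$; the intermediate value theorem again produces $t^\ast$ with $\H(X_{t^\ast})=xN\log d$, and now the right inequality of \eqref{eq:H-quasi-linear} gives $xN\log d=\H(Y)\le(1-t^\ast)\H(X)+\log 2$, so $t^\ast\le\bigl(\H(X)-xN\log d+\log 2\bigr)/\H(X)\le(\delta N\log d+\log 2)/(\tfrac x2 N\log d)$ once $\delta\le x/2$, and consequently $|\I^c(Y)-\I^c(X)|\le t^\ast\I^c(X)+2\log 2$ is again $O(\delta N\log d)+O(1)$. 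In both cases it now suffices to take $\delta_0$ a small fixed multiple of $\eps\min\{x,1-x\}$ and $N_0$ large enough that the additive $O(1)$ terms are $\le\tfrac\eps2 N_0\log d$; then $|\I^c(Y)-\I^c(X)|\le\eps N\log d$ for all $0<\delta<\delta_0$ and $N\ge N_0$, while $\H(Y)=xN\log d$ by construction.

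\textbf{Main obstacle.} The only delicate point is the quantitative control of the mixing weight $t^\ast$: everything depends on the auxiliary system's entropy being a \emph{fixed fraction} of $N\log d$ away from $xN\log d$, which is what forces $t^\ast=O(\delta)$ and thus $t^\ast\,\I^c(X)=O(\delta\,N\log d)$; this is precisely where the hypothesis $0<x<1$ (so that there is room on both sides, $N\log d-\H(X)\gtrsim(1-x)N\log d$ and $\H(X)\gtrsim xN\log d$) is needed. The small constants $\log 2$, $2\log 2$ inherited from Lemma~\ref{lem:I-linear} are harmless, being absorbed by choosing $N$ large.
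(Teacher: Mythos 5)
Your proposal is correct and follows essentially the same route as the paper's proof: mix $X$ with an i.i.d.\ uniform (resp.\ deterministic) auxiliary system, locate the mixing weight by the intermediate value theorem, bound it by $O(\delta)$ using the quasi-linearity of entropy from Lemma~\ref{lem:I-linear}, and then transfer this to the intricacy via \eqref{eq:H-quasi-linear-i} together with the a priori bound $\I^c(X)\le\tfrac N2\log d$. The only differences are cosmetic choices in the bookkeeping of constants.
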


\begin{proof}
We fix $\delta_0=\delta_0(\eps,x)>0$ so small that:
 $$
    \frac{\delta_0}{\min\{1-x-\delta_0,x-\delta_0\}} < \eps/4
 $$
and $N_0=N_0(\eps,x,\delta_0)$ so large that:
 $$
    \frac{\log 2}{N_0\min\{1-x-\delta_0,x-\delta_0\}\log d} < \eps/4.
 $$
Let $N\geq N_0$ and $X\in\cX(d,N)$ be such that $\left|\frac{\H(X)}
{N\log d}-x\right|\leq\delta\leq\delta_0$. There are two similar
cases, depending on whether
$\H(X)$ is greater or less than $xN\log d$. We
assume $h:=\H(X)/N\log d<x$ and shall explain at the
end the necessary modifications for the other case.

Let $Z=(Z_i, i=1,\ldots,N)$ be i.i.d. random variables, uniform over
$\{0,\dots,d-1\}$. We consider $Y^t\in\cX(d,N)$ defined by
\[
Y^t:=X\, \un{(U\leq1-t)}+Z\, \un{(U>1-t)},
\]
where $U$ is a uniform random variable over $[0,1]$ independent of $X$ and $Z$.
$\I^c(Y^0)=\I^c(X)$ and $\I^c(Y^1)=\I^c(Z)=0$. Hence,
by the continuity of the intricacy, we get that there is
some $0<t_0<1$ such that $\H(Y^{t_0})=xN\log d$. Let us
check that $t_0$ is small.

By \eqref{eq:H-quasi-linear}
 \[
    0\leq \H(Y^t) - (1-t)\H(X)-t\H(Z) =
    \H(Y^t) - (1-t)hN\log d-tN\log d \leq \log 2.
    \]
so that, for some $\alpha\in[0,1]$,
 $$
    0<t_0=\frac{x-h}{1-h}- \frac{\alpha\log 2}{N(1-h)\log d}\leq \frac{\delta}{1-x-\delta}
      <\frac\eps2,
 $$
since $\delta\leq\delta_0$.
Thus, by \eqref{eq:H-quasi-linear-i}, setting $Y:=Y^{t_0}$,
\[
    \left|\I^c(Y)- (1-t_0)\I^c(X)-t_0\, \I^c(Z)\right|
    = \left|\I^c(Y)- (1-t_0)\I^c(X)\right| \leq 2\log 2,
\]
and therefore by \eqref{true}
\[
   \left|\I^c(Y)-\I^c(X)\right|\leq t_0\I^c(X)+ 2\log 2
      \leq \frac\eps2 \, N\log d+2\log 2.
\]
Dividing by $N\log d\geq N_0\log d$ we obtain the desired estimate.

For the case $h>x$, we use instead a system $Z$ with
constant variables, so that $\H(Z)=0=\I^c(Z)$ and a similar argument gives the result.
\end{proof}

\subsection{Proof of Theorem \ref{thm:main}}

Assertion (1) is already established: see Proposition \ref{bubi}.
It remains to complete the proof of the second assertion.

Let us set for $\delta\geq 0$
\[
\I^c(d,N,x,\delta):=\sup \left\{ \I^c(X): X\in\cX(d,N), \
            \left|\frac{\H(X)}{N\log d} - x\right| \leq \delta
            \right\}.
\]
We want to prove that
\[
\I^c(d,x) = \lim_{N\to+\infty}\frac1{N} \, \I^c(d,N,x,\delta_N).
\]
for any sequence $\delta_N\geq 0$ converging to $0$ as $N\to+\infty$.
We first observe that \eqref{omerX} gives that the limit exists and is equal
to $\I^c(d,x)$ if $\delta_N=0$, for all $N\geq1$. Consider now a general
sequence of non-negative numbers $\delta_N$ converging to zero.
Obviously, $\I^c(d,N,x,\delta_N)\geq \I^c(d,N,x,0)$, so that
\[
    \liminf_{N\to\infty} \frac1N\left(\I^c(d,N,x,\delta_N)
    -\I^c(d,N,x,0) \right)\geq 0.
    \]

Let us prove the reverse inequality for the $\limsup$. Let $\eps>0$.
Let $X^N\in\cX(d,N)$ realize $\I^c(d,N,x,\delta_N)$.
Let $\delta_0$ and $N_0$ be as in Lemma \ref{lem:adjust-H}.
We may assume that $N\geq N_0$ and $\delta_N<\delta_0$.
It follows that there is some $Y^N\in\cX(d,N)$ with the
entropy $Nx\log d$ such that $\I^c(Y^N)\geq \I^c(X^N)-\eps N$.
Hence, $\I^c(d,N,x,0)\geq \I^c(d,N,x,\delta_N)-\eps N$.
We obtain
\[
    \limsup_{N\to\infty} \frac1N\left(\I^c(d,N,x,\delta_n)
    -\I^c(d,N,x,0) \right)\leq\eps,
\]
Assertion (2) follows by letting $\eps\to0$. \quad \quad $\square$

\section{Exchangeable systems}

In this section we prove Theorem \ref{thm:exchsta}, 
namely we prove that exchangeable systems have small intricacy. In particular,
one cannot approach the maximal intricacy with such systems.

\medbreak

\begin{prop}\label{prop:I-exch}
Let $\I^c$ be any mutual information functional and $d\geq2$. Then for all
$\gep>0$ there exists a constant $C=C(\gep,d)$  such that
for all exchangeable $X\in\cX(d,N)$
 \begin{equation}\label{eq:exex}
\I^c(X) \leq CN^{\frac23+\gep}, \qquad N\geq 2.
 \end{equation}
In particular
 $$
    \lim_{N\to\infty} \frac1N \max_{X\in\EX(d,N)} \I^c(X) = 0.
 $$
\end{prop}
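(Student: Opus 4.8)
The plan is to bound the intricacy of an exchangeable system $X \in \EX(d,N)$ by controlling, for each fixed $S \subset I$, the mutual information $\MI(S) = \MI(X_S, X_{S^c})$, and then to average using the probabilistic representation from Proposition \ref{1917}. Write $\I^c(X) = \E(\MI(\cZ \cap I))$ where $\cZ$ is the Kingman-paintbox random set; since $\MI(S) \le \min\{|S|, |S^c|\}\log d \le \frac{N}{2}\log d$ always, the difficulty is only with sets $S$ whose size is close to $N/2$. So the strategy is: (i) show $\MI(S)$ is small unless $|S|$ is within $O(N^{1/3+\gep})$ of... no, rather: (ii) show that for \emph{every} $S$, exchangeability forces $\MI(S) = o(N)$, in fact $O(N^{2/3+\gep})$, and then the trivial averaging gives the bound.

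First I would fix the key mechanism: for an exchangeable system, the conditional entropy $\H(X_i \mid X_{S})$ depends only on $|S|$ and whether $i \in S$; more usefully, exchangeability plus the chain rule $\H(X_S) = \sum_{k=0}^{|S|-1} \H(X_{\sigma(k+1)} \mid X_{\sigma(1)}, \dots, X_{\sigma(k)})$ shows that $k \mapsto \H(X_{[k]})$ (entropy of the first $k$ coordinates) is concave and nondecreasing, because the conditional entropies $h_k := \H(X_{k+1} \mid X_1, \dots, X_k)$ are nonincreasing in $k$ by "conditioning reduces entropy" combined with exchangeability (conditioning on $k$ of the remaining variables is, in law, the same as conditioning on the first $k$). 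Then $\MI(S)$ with $|S| = s$ equals $\H(X_{[s]}) + \H(X_{[N-s]}) - \H(X_{[N]})$, a second difference of the concave sequence $g(k) := \H(X_{[k]})$, namely $g(s) + g(N-s) - g(N) = g(s) - (g(N) - g(N-s)) = \sum_{j=0}^{s-1} h_j - \sum_{j=N-s}^{N-1} h_j = \sum_{j=0}^{s-1}(h_j - h_{N-s+j})$. Since $0 \le h_j \le \log d$ and $h$ is nonincreasing, this telescoping-type sum is bounded by $s(h_0 - h_{N-s}) \le s \cdot h_0 \le s\log d$, but more importantly it is also controlled by how fast $h$ decays: $\sum_{j=0}^{N-1} h_j = \H(X) \le N\log d$ and monotonicity of $h_j$ forces $h_j \le \frac{1}{j+1}\sum_{i=0}^{j} h_i \le \frac{\H(X)}{j+1} \le \frac{N\log d}{j+1}$.

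The main obstacle — and the crux of the $2/3$ exponent — is to turn this into an $O(N^{2/3+\gep})$ bound uniform over $s$. Combining $\MI(S) \le s\log d$ (good for small $s$), $\MI(S) \le (N-s)\log d$ (good for $s$ near $N$), and the decay estimate: for $s \le N/2$, $\MI(S) = \sum_{j=0}^{s-1}(h_j - h_{N-s+j})$; bound the terms with small $j$ (say $j < T$) by $h_j \le \log d$ giving a contribution $\le T\log d$, and the terms with $j \ge T$ by $h_j \le N\log d/(j+1) \le N\log d / T$, of which there are at most $s \le N/2$, giving a contribution... this naive split yields $O(N^2/T + T)$, optimized at $T \sim N$, which is too weak. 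The improvement must exploit the \emph{difference} $h_j - h_{N-s+j}$ and the constraint that $\sum_j (h_j - h_{j+1}) = h_0 - h_{N} \le \log d$ telescopes (total variation of $h$ is $\le \log d$): so $\sum_{j} (h_j - h_{N-s+j}) \le (N-s) \cdot \sum_{\text{telescoping}} \le$ something — more carefully, writing $h_j - h_{N-s+j} = \sum_{i=j}^{N-s+j-1}(h_i - h_{i+1})$ and swapping order of summation, each decrement $h_i - h_{i+1}$ is counted at most $\min\{s, N-s\}$ times, so $\MI(S) \le \min\{s,N-s\}\,(h_0 - h_N) \le \min\{s,N-s\}\log d$ — again the trivial bound. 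So the exponent $2/3$ genuinely requires a quantitative submodularity / subadditivity argument beyond pure monotonicity: I expect the real proof introduces an auxiliary quantity (e.g. bounding $\MI(S)$ via $\MI$ of a single variable against the rest, $\MI(X_i, X_{I \setminus \{i\}})$, summed, and using that $\sum_i \MI(X_i, X_{I\setminus i}) \le$ something like $\H(X) \le N\log d$ while each such term relates to $h_0 - h_{N-1}$), combined with an entropy-gap dichotomy: either the per-variable information $h_0 - h_{N-1}$ is $\lesssim N^{-1/3}$, in which case a direct telescoping bounds every $\MI(S)$ by $N^{2/3}$; or it is large, in which case the total entropy budget $N\log d$ restricts the number of "active" scales and one again gets $N^{2/3}$. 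I would set $T = N^{2/3}$, split the index range $\{0,\dots,N-1\}$ at $T$ and at $N-T$, bound the middle block of $h_j$'s by their average ($\le N\log d / T = N^{1/3}\log d$ each, times at most $N$ of them, no good) — hence the correct bound must instead control $\sum_{T \le j \le N-T}(h_j - h_{N-s+j})$ using that \emph{both} $h_j$ and $h_{N-s+j}$ lie in the shrinking window, so their difference is at most the oscillation of $h$ over that window, and the total oscillation summed over disjoint windows is $\le \log d$. This is where the $N^{\gep}$ slack enters, presumably from covering $[T, N-T]$ by $O(\log N)$ or $O(N^\gep)$ dyadic windows. Once the pointwise bound $\MI(S) \le C(\gep, d) N^{2/3+\gep}$ is established for all $S$, averaging over $\cZ \cap I$ via Proposition \ref{1917}(3) gives $\I^c(X) = \E(\MI(\cZ \cap I)) \le C N^{2/3+\gep}$ immediately, and dividing by $N$ and letting $N \to \infty$ gives $\frac1N \max_{X \in \EX(d,N)} \I^c(X) \to 0$.
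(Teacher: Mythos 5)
Your reduction is set up correctly: writing $g(k):=\H(X_1,\dots,X_k)$ and $h_k:=\H(X_{k+1}\mid X_1,\dots,X_k)$, exchangeability does make $(h_k)$ nonincreasing, $g$ concave, and $\MI(S)=g(s)+g(N-s)-g(N)=\sum_{j=0}^{s-1}(h_j-h_{N-s+j})$ for $|S|=s$; and a pointwise bound $\MI(S)\leq CN^{2/3+\gep}$ would indeed finish the proof by averaging over $S$ (here one does not even need the representation of Proposition \ref{1917}: since $\H(X_S)$ depends only on $|S|$, any coefficients with $\sum_S c^I_S=1$ and $c^I_S=c^I_{S^c}$ suffice, which matches the proposition's hypothesis of an arbitrary mutual information functional). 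But the central estimate is never established, and you say so yourself: every bound you extract from monotonicity and from the total-variation/telescoping structure of $(h_j)$ collapses back to $\MI(S)\leq\min\{s,N-s\}\log d$, which is the trivial Lemma \ref{bert} bound. This is not a presentational gap but a structural one: concavity of $g$ alone \emph{cannot} yield the result, since the concave nondecreasing profile $g(k)=\min(k,N/2)\log d$ satisfies all the constraints you use yet gives $g(N/2)+g(N/2)-g(N)=(N/2)\log d$. Ruling out such profiles for genuinely exchangeable laws is exactly the content of the proposition, and your proposal offers only speculation ("I expect the real proof introduces an auxiliary quantity\dots", "presumably from covering\dots") at the point where the actual argument must go.

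The paper's mechanism is quite different and does not pass through conditional entropies at all. For exchangeable $X$ the law of $X_S$, $|S|=n$, is determined by the type probabilities $q_{n,\kk}=\binom{n}{\kk}p_{n,\kk}$, and Stirling gives $\H(X_S)/n=\sum_{|\kk|=n}q_{n,\kk}\,h(\kk/n)+O((\log n)/n)$. The key point (inequality \eqref{eq:h-propor}) is a concentration estimate for sampling without replacement: the type $\kk/n$ of a sub-sample of size $n$ drawn from a configuration of type $\KK/N$ deviates from $\KK/N$ by more than $\delta$ with probability at most $N^{\kappa}e^{-2n\delta^2}$, obtained from the strict concavity of $h$. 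Taking $n\geq N^{2/3+\gep}$ and $\delta=N^{-1/3}$ yields $\bigl|\H(X_S)/|S|-\H(X)/N\bigr|\leq CN^{-1/3+\gep}$, i.e.\ the entropy of every not-too-small subsystem is forced to be nearly proportional to its size — a quantitative de Finetti-type rigidity that is precisely the input missing from your approach. If you want to salvage your framework, this is the statement you must prove; it is strictly stronger than concavity of $g$.
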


\begin{proof}
Fix $\gep>0$.
Throughout the proof, we denote by $C$ constants which only depend on $d$ and $\gep$
and which may change value from line to line. Also $\kk=(k_1,\dots,k_d)\in\N^d$,
$\xx:=\frac1n\kk$ and $|\kk|:=k_1+\dots+k_d=n$ and
the multinomial coefficients and the entropy function are denoted by:
 $$
   \binom{n}{\kk} =
      \frac{n!}{k_1!k_2!\dots k_d!}, \qquad
   h(\xx) = -\sum_{i=1}^d x_i\log x_i.
 $$
We are going to use the following version of
Stirling's formula
\[
n! = \sqrt{2\pi n}\left(\frac ne\right)^n e^{\zeta_n}, \qquad
\frac1{12n+1}<\zeta_n<\frac1{12n}, \qquad n\geq 1.
\]
Therefore, for all $\kk\in\N^d$ such that $|\kk|= n$
 $$
   \binom{n}{\kk}=
     \left[ e^{n h(\xx)} (2\pi n)^{1/2}\prod_{x_i\ne0}(2\pi nx_i)^{-1/2}\right] g(\kk,n),
 $$
 where $ g(\kk,n) := \exp(\zeta_n-\zeta_{k_1}-\cdots-\zeta_{k_d})$ and therefore
 \[
\exp(-d)\leq g(\kk,n) \leq \exp(1).
 \]
In particular, as all non-zero $x_i$ satisfy $x_i\geq 1/n$,
 \begin{equation}\label{eq:combi-stirling}
   \left|\frac1n\log\binom{n}{\kk}- h(\xx)\right| \leq C\, \frac{\log n}n.
 \end{equation}
Let $X\in\EX(d,N)$. We set for $0\leq n\leq N$ and $|\kk|=n$
 $$
   p_{n,\kk} = \mathbb P(X_1=\dots=X_{k_1}=1,\dots,
     X_{k_1+\dots+k_{d-1}+1}=\dots=X_n=d).
 $$
These $\binom{n+d-1}{d-1}$ numbers determine
the law of any subsystem $X_S$ of size $|S|=n$.
It is convenient to define also $Y_i:=\#\{1\leq j\leq n:X_j=i\}$ for $i=0,\dots,d-1$ and
 $$
   q_{n,\kk} := \mathbb P(
      Y_i=k_i, \ i=0,\dots,d-1) = \binom{n}{\kk} \, p_{n,\kk}.
 $$
 Since the vector $(q_{n,\kk})_{|\kk|=n}$ gives the law of the vector $(Y_1,\ldots,Y_d)$ we have in particular
 \[
 \sum_{|\kk|=n} q_{n,\kk} = 1.
 \]
Second, we observe that for $|S|=n$
 \begin{equation}\label{eq:h-linear}
 \left|\frac{\H(X_S)}n-\frac1n\sum_{|\kk|=n} q_{n,\kk} \, h(\xx)\right|\leq C\, \frac{\log n}n.
  \end{equation}
  Indeed
  \[
   \begin{split}
   \frac{\H(X_S)}n & =  -\frac1n\sum_{|\kk|=n} q_{n,\kk} \, \log\frac{q_{n,\kk}}{\binom{n}{\kk}}
     = \sum_{|\kk|=n} q_{n,\kk} \, \frac1n\, \log\binom{n}{\kk}
       -\frac1n\sum_{|\kk|=n} q_{n,\kk} \log q_{n,\kk}\\
     & = \frac1n\sum_{|\kk|=n} q_{n,\kk} \, h(\xx)+G(n), \qquad |G(n)|\leq C\, \frac{\log n}n,
  \end{split}
  \]
  where we use \eqref{eq:combi-stirling} and the fact that
  \[
  -\sum_{|\kk|=n} q_{n,\kk} \log q_{n,\kk} =\H(Y_1,\ldots,Y_d) \leq d\log n,
  \]
since the support of the random vector $(Y_1,\ldots,Y_d)$ has cardinality at most $n^d$.

Third, we claim that, for $\gep>0$, there exists a constant $C$ such that for all
$N$ and all $X\in\EX(d,N)$, for all $n\in[\tilde N,N]$ with $\tilde N:=\lfloor N^{\frac23+\gep}+1\rfloor$,
 \begin{equation}\label{eq:h-propor}
     \left| \sum_{|\kk|=n} q_{n,\kk} h(\xx) - \sum_{|\KK|=N} q_{N,\KK} h(\XX) \right|
              \leq C\, N^{-\frac13+\gep},
 \end{equation}
where $\XX:=\frac1N\KK$ (no relation with the random variable $X$).
By \eqref{eq:h-linear} and \eqref{eq:h-propor} we obtain for all $n\in[\tilde N,N]$ and $|S|=n$
 \begin{equation}\label{eq:h-ququ}
     \left| \frac{\H(X_S)}n - \frac{\H(X)}N \right|
              \leq C\, N^{-\frac13+\gep}.
 \end{equation}
Let us show how \eqref{eq:h-ququ} implies \eqref{eq:exex}. Using $\H(X_S)\leq\log d\cdot|S|$,
$\sum_{S\subset I} c^I_S=1$, we get
 $$
    \sum_{|S|< \tilde N} c^I_S\MI(S) \leq
    \sum_{S\subset I} c^I_S \times \log d\cdot \tilde N
    = \log d\cdot \tilde N.
 $$
Using \eqref{eq:I-as-H}, exchangeability of $X$,
$\sum_{n=0}^N c^N_n\binom{N}{n}=1$ and \eqref{eq:h-ququ}, we estimate
\[
 \begin{split}
  \I^c(X) & \leq 2 \cdot \log d \cdot \tilde N +
        2\sum_{n=\tilde N}^N \binom{N}{n} \, c^N_n \H(X_{\{1,\dots,n\}})
       -\H(X)\\
  & \leq 2\sum_{n=0}^N \binom{N}{n} \, c^N_n\, n \, \left(\frac{\H(X)}N + C\, N^{-\frac13+\gep}\right)
           -\H(X)  +C\tilde N.
 \end{split}
 \]
Finally, using $c^N_n\binom{N}{n}=c^N_{N-n}\binom{N}{N-n}$ and
$\sum_{n=0}^N c^N_n\binom{N}{n} =1$
\[
 \begin{split}
  \I^c(X)  & \leq \left(2\sum_{n=0}^N c^N_n\binom{N}{n} \frac{n}{N}-1\right)
      \H(X)+CN \times N^{-\frac13+\gep} + C\tilde N\\
   & \leq \left(\sum_{n=0}^N c^N_n\binom{N}{n} \left(\frac{n}{N}+\frac{N-n}{N}\right)-1\right) \H(X)+ CN^{\frac23+\gep}
   = CN^{\frac23+\gep}
  \end{split}
\]
and \eqref{eq:exex} is proved.

We turn now to the proof of \eqref{eq:h-propor}. We claim first that
\begin{equation}\label{kaka}
    p_{n,\kk} = \sum_{|\KK|=N, \, \KK\geq \kk} p_{N,\KK} \binom{N-n}{\KK-\kk}.
\end{equation}
Indeed, notice that
 $$
     p_{n,\kk}=\sum_{j=1}^d p_{n+1,\kk+\mathbf \delta^j}, \qquad
        \forall \ 0\leq n<N,\ \forall \ |\kk|=n,
 $$
where  $\delta^j:=(\delta^j_1,\dots,\delta^j_d)$ with
$\mathbf\delta^j_i=1$ if $i=j$, $0$ otherwise. This in particular yields
\eqref{kaka} for $N=n+1$. Moreover if $|\KK|=n+1$ then
 $$
   \binom{n+1}{\KK}=\sum_{j=1}^d \binom{n}{\KK-\mathbf\delta^j} \, \un{(\KK\geq \delta^j)}.
 $$
 Then, arguing by induction on $N\geq n$
 \[
 \begin{split}
 p_{n,\kk} & = \sum_{|\KK|=N, \, \KK\geq \kk} p_{N,\KK} \binom{N-n}{\KK-\kk}
 = \sum_{|\KK|=N, \, \KK\geq \kk} \sum_{j=1}^d p_{N+1,\KK+\mathbf \delta^j} \binom{N-n}{\KK-\kk}
 \\ & = \sum_{|\KK'|=N+1} p_{N+1,\KK'} \sum_{j=1}^d \binom{N-n}{\KK'-\kk-\mathbf \delta^j}
 \, \un{(\KK-\kk\geq \delta^j)}
 \\ & = \sum_{|\KK'|=N+1, \KK'\geq\kk} p_{N+1,\KK'} \binom{N+1-n}{\KK'-\kk}.
 \end{split}
 \]
We recall that
$q_{n,\kk} = \binom{n}{\kk} \, p_{n,\kk}$.
Notice that it is enough to prove claim \eqref{eq:h-propor} in
the case $q_{N,\kk'}=\delta_{\kk',\KK}$, i.e., $p_{N,\kk'}=\binom{N}{\kk'}^{-1}$ for
$\kk'=\KK$ and zero otherwise, if we find a constant $C$ which does not depend on
$(N,n,\KK)$.
Indeed, the two expressions are linear
and the average of $CN^{-1/3+\gep}$ will remain of the same
order. Thus, we need to estimate:
 $$
   a(N,\KK,n,\kk) :=
   q_{n,\kk}
   =  \binom{n}{\kk} \times \binom{N}{\KK}^{-1} \binom{N-n}{\KK-\kk}.
 $$
Let $\xx:=\kk/n\in[0,1]^d$, $\XX:=\KK/N\in[0,1]^d$ and $\nu=:n/(N-n)$.
Formula \eqref{eq:combi-stirling} implies that $\frac1n\log a(N,\KK,n,\kk)$ is equal to:
 $$
      \underbrace{
      h(\xx)  -(1+\nu^{-1}) h(\XX) + \nu^{-1}h(\XX+\nu(\XX-\xx))}_{
          =: \phi_{\nu,\XX}(\xx)}+G(N,n),
 $$
where $|G(N,n)|\leq \kappa(\log N)/n$, for some $\kappa=\kappa(d)$.

Let us now write for all $(x_1,\ldots,x_{d-1})\in[0,1]^{d-1}$ such that $\sum_i x_i\leq 1$
\[
H(x_1,\ldots,x_{d-1}):=h(x_1,\ldots,x_{d}), \quad x_d:=1-x_1-\ldots-x_{d-1}.
\]
Observe that for $i,j\leq d-1$
\[
\frac{\partial H}{\partial x_i}=\log\left(\frac{x_d}{x_i}\right),
\qquad \frac{\partial^2 H}{\partial x_i\partial x_j} = -\frac1{x_d}
- \frac1{x_i}\, \un{(i=j)}.
\]
In particular the Hessian of $H$ is negative-definite, since for all $a\in\R^{d-1}\backslash\{0\}$
\[
\sum_{i,j=1}^{d-1} a_ia_j \frac{\partial^2 H}{\partial x_i\partial x_j} =
-\frac1{x_d}\left(\sum_{i=1}^{d-1} a_i\right)^2- \sum_{i=1}^{d-1} \frac1{x_i}\, a_i^2\leq
- \sum_{i=1}^{d-1} a_i^2
\]
where we use the fact that $x_i\leq 1$. Hence, $h$ is concave and we obtain
\[
\phi_{\nu,\XX}(\xx)=\frac{\nu+1}\nu\left[\frac\nu{\nu+1}\, h(\xx)+\frac1{\nu+1}\,
h((1+\nu)\XX-\nu \xx)-h(\XX)\right]\leq 0,
\]
so that the maximum of $\phi_{\nu,\XX}$ is $0=\phi_{\nu,\XX}(\XX)$. The second order derivative estimate gives:
 $$
    \phi_{\nu,\XX}(\xx)\leq -2 \|\xx-\XX\|^2
     \qquad \text{ where }\|\xx\|:=\sqrt{x_1^2+\dots+x_d^2}.
 $$
Combining with the bound $|G(N,n)|\leq \kappa(\log N)/n$ above, we get, for all
$n<N$:
 $$
   a(N,\KK,n,\kk)\leq N^\kappa \times e^{-2n\|\xx-\XX\|^2}.
 $$
Recall $n\geq\tilde N=N^{\frac23+\gep}$ and set
$\delta:=N^{-\frac13}$ and
 $$
   \omega:=\sup_{\|\xx-\XX\|<\delta} \|h(\XX)-h(\xx)\|
        \leq C \, \delta \log \frac1{\delta}.
 $$
Finally, using $h(\xx)\leq\log d$,
\[
 \begin{split}
&  \left|\sum_{|\kk|=n} q_{n,\kk} h(\xx) - h(\XX)\right|
    \leq \omega \sum_{\|\xx-\XX\|<\delta} q_{n,\kk}
      + 2\log d  \sum_{\|\xx-\XX\|\geq \delta} q_{n,\kk} \\
 &   \leq C \,\delta \log \frac1{\delta}+ C \, n^d\, N^\kappa \,
     e^{-2\tilde N\delta^2}
    \leq C (\log N) N^{-\frac13} +
      C N^{\kappa+d} e^{-2N^\gep} \leq C N^{-\frac13+\gep}.
 \end{split}
 \]
Then \eqref{eq:h-propor} and the proposition are proved.
\end{proof}

\section{Small support}

In this section we prove Theorem \ref{prop:support}, namely
we show that exact
maximizers have small support. Numerical experiments suggest
that this support has in fact cardinality of order $d^{N/2}$.
We are able to prove the following weaker estimate.
For a fixed law $\mu\in\cM(d,N)$, we call forbidden configurations
the elements of $\Lambda_{d,N}:=\{0,\dots,d-1\}^N$ with zero $\mu$-probability.

\begin{prop}\label{prop:forbidden}
Let $\I^c(X)$ be a non-null intricacy.
Let $d=2$ and $N$ large enough. Let $\mu\in\cX(d,N)$ be a maximizer of $\I^c$. The forbidden
configurations are a lower-bounded fraction of all configurations:
 $$
   \#\{\omega\in\Lambda_{d,N}:\mu([\omega])=0\}\geq c(d) |\Lambda_{d,N}|,
 $$
for some $c(d)>0$ independent of $N$.
\end{prop}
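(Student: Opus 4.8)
The plan is to prove a sharp \emph{local} constraint on the support of a maximizer and then deduce the density of forbidden configurations by a counting argument. Write $I=\{1,\dots,N\}$ and (recalling $d=2$) let $\mu\in\cM(2,N)$ maximize $\I^c$; such a $\mu$ exists by Lemma~\ref{assumed}. I will show that for every two--element set $D=\{i,j\}\subseteq I$ and every $\eta\in\{0,1\}^{I\setminus D}$, the four configurations extending $\eta$ over $D$ cannot \emph{all} carry positive $\mu$--mass. Granting this, fix $D=\{1,2\}$: the $2^{N-2}$ pairwise disjoint ``$D$--cubes'' indexed by $\eta\in\{0,1\}^{\{3,\dots,N\}}$ each contain at least one forbidden configuration, whence $\#\{\omega:\mu(\{\omega\})=0\}\ge 2^{N-2}=\tfrac14|\Lambda_{2,N}|$, which is the proposition with $c(2)=1/4$.

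To prove the local constraint I argue by contradiction via the second--order necessary condition for a maximum. Suppose the four extensions of $\eta$ over $D=\{i,j\}$ all lie in $\operatorname{supp}\mu$, and take the signed direction $v\in\R^{\Lambda_{2,N}}$ given by $v(\omega)=(-1)^{\omega_i+\omega_j}$ if $\omega$ agrees with $\eta$ off $D$ and $v(\omega)=0$ otherwise. Then $\sum_\omega v(\omega)=0$ and $\operatorname{supp}v\subseteq\operatorname{supp}\mu$, so $\mu+tv\in\cM(2,N)$ for $|t|$ small and $g(t):=\I^c(\mu+tv)$ is $C^2$ near $0$ (any marginal $\mu_S$ that vanishes at a point $\zeta$ has $\sum_{\omega|_S=\zeta}v(\omega)=0$ there, so the corresponding entropy term stays identically zero). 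Since $t=0$ is then an interior maximum of $g$, necessarily $g''(0)\le0$. Writing $\mu_S$ for the law of $X_S$ and $(P_Sv)(\zeta):=\sum_{\omega|_S=\zeta}v(\omega)$, differentiating the representation \eqref{eq:I-as-H} twice gives
\[
g''(0)=\sum_{\omega}\frac{v(\omega)^2}{\mu(\{\omega\})}\;-\;2\sum_{S\subseteq I}c^I_S\sum_{\zeta\in\{0,1\}^S}\frac{(P_Sv)(\zeta)^2}{\mu_S(\{\zeta\})}.
\]

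Next I check that the $S$--summand vanishes unless $D\subseteq S$ (summing the $\pm1$'s over a free coordinate in $D\setminus S$ gives $0$), and that for $D\subseteq S$ one has $(P_Sv)(\zeta)^2=1$ exactly when $\zeta$ agrees with $\eta$ on $S\setminus D$, and $=0$ otherwise. Combined with the trivial marginal bound $\mu_S(\{\zeta\})\ge\mu(\{\omega\})$ whenever $\omega|_S=\zeta$, this yields $\sum_\zeta(P_Sv)(\zeta)^2/\mu_S(\{\zeta\})\le\Sigma$ for every $S\supseteq D$, where $\Sigma:=\sum_{\omega:\,\omega|_{I\setminus D}=\eta}1/\mu(\{\omega\})\in(0,\infty)$. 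Hence
\[
g''(0)\ \ge\ \Sigma\Bigl(1-2\sum_{D\subseteq S\subseteq I}c^I_S\Bigr)\ =\ \Sigma\Bigl(1-2\!\int_{[0,1]}\!x^{2}\,\lambda_c(dx)\Bigr),
\]
using the identity $\sum_{D\subseteq S\subseteq I}c^I_S=\int_{[0,1]}x^{|D|}\lambda_c(dx)$, which is immediate from \eqref{exploo0} of Theorem~\ref{main1} and the binomial theorem. Because $\I^c$ is non--null, $\lambda_c(\,]0,1[\,)>0$ by Theorem~\ref{main1}, so $\int x^2\lambda_c(dx)<\int x\,\lambda_c(dx)=\tfrac12$ (the last equality by symmetry of $\lambda_c$). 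Thus $g''(0)>0$, contradicting $g''(0)\le0$; this proves the local constraint and the proposition.

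The one genuinely delicate point is the construction of the test direction $v$ and the ensuing sign computation: $v$ must be a $\{0,1\}$--character in the coordinates of a \emph{two}--element set $D$, supported inside $\operatorname{supp}\mu$, so that exactly the marginals $\mu_S$ with $S\supseteq D$ enter the Hessian and do so with a ``flat'' numerator, converting $2\sum_S c^I_S(\cdots)$ into $2\!\int x^{|D|}\lambda_c(dx)$. For $|D|=1$ this would be $2\!\int x\,\lambda_c(dx)=1$, leaving no slack; it is precisely the choice $|D|=2$, together with the strict inequality $\int x^2\lambda_c(dx)<\tfrac12$ that encodes non--nullity, that produces a strictly positive second derivative and hence the contradiction. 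The remaining ingredients — smoothness of $g$, the marginal monotonicity, the vanishing of $P_Sv$ for $S\not\supseteq D$, and the moment identity for $\sum_{S\supseteq D}c^I_S$ — are routine.
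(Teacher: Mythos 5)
Your proof is correct, and it takes a genuinely different route from the paper's. The paper perturbs the maximizer along $e_{\omega_1}-e_{\omega_0}$ for two configurations $\omega_0,\omega_1$ lying in a block $\Omega_\beta$ obtained by freezing all but $z$ coordinates; the resulting second-order condition forces $\sum_S c^I_S/|[\omega]_S\cap\Omega_\beta| = \int_{[0,1]}(\frac{x}{d}+1-x)^z\,\lambda_c(dx) > \frac14$, which fails for $z$ large \emph{after} a preliminary reduction to $\lambda_c(\{0\})=0$; this yields the constant $d^{-z}$ and needs $N$ large. You instead perturb along the character $v(\omega)=(-1)^{\omega_i+\omega_j}$ supported on a two-coordinate subcube, so that only the marginals with $S\supseteq D$ survive in the Hessian, and the second-order condition reduces to the single moment inequality $2\int x^2\,\lambda_c(dx)\ge 1$; non-nullity plus the symmetry $\int x\,\lambda_c(dx)=\frac12$ gives $\int x^2\,\lambda_c(dx)<\frac12$ and the contradiction. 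Your version buys an explicit constant $c(2)=\frac14$, works for every $N\ge2$, and absorbs the atoms of $\lambda_c$ at $0$ and $1$ without the paper's decomposition $\lambda_c=\alpha\lambda_0+(1-\alpha)\lambda_{c'}$; the trade-off is that the character trick as written is tied to $d=2$, whereas the paper's transposition-type perturbation makes sense verbatim for any alphabet size. All the supporting steps you flag as routine (smoothness of $g$ on the support, vanishing of $P_Sv$ for $S\not\supseteq D$, the marginal bound $\mu_S(\{\zeta\})\ge\mu(\{\omega\})$, and the identity $\sum_{S\supseteq D}c^I_S=\int x^{2}\lambda_c(dx)$) check out.
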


\begin{proof}
If $\I^c$ is non-null, then $\lambda_c(\{0,1\})=2\lambda_c(\{0\})<1$ and
therefore $\lambda_c(\{0\})<1/2$. However we can without loss of generality
suppose that $\lambda_c(\{0\})=0$: indeed it is enough to remark that
\begin{enumerate}
\item the probability measure $\lambda_0:=\frac{\delta_0+\delta_1}2$ is associated
with the null intricacy $\I^0\equiv 0$,
\item the correspondence $\lambda_c\mapsto \I^c$ is linear and one-to-one,
\item we can write $\lambda_{c}=\alpha \lambda_0+(1-\alpha)\lambda_{c'}$, where
\[
\alpha:=2\lambda_c(\{0\})<1, \qquad \lambda_{c'}([a,b])= \frac{\lambda_{c}([a,b]\,
\cap \, ]0,1[)}{\lambda_{c}(]0,1[)},
\quad \forall \, a\leq b.
\]
\end{enumerate}
Therefore $\I^c=\alpha\I^0+(1-\alpha)\I^{c'}=(1-\alpha)\I^{c'}$ and $\I^{c'}$
has the same maximizers as $\I^c$ but with $\lambda_{c'}(\{0\})=0$

We fix some large integer $z$ (how large will be explained below),
$N>z$ and $d\geq2$ and we consider the intricacy $\I^c$ as a function defined
on the simplex $\cM(d,N)=\{(p_\omega)_{\omega\in\Lambda_{d,N}} \in\mathbb R_+^{d^N}:\sum_{\omega\in\Lambda_{d,N}} p_\omega=1\}$.
A straightforward computation yields:
 $$
   \frac{\partial \I^c}{\partial p_\omega} = -2\sum_{S\subset I} c^I_S \log
   \left(\sum_{\alpha\equiv\omega[S]} p_\alpha \right)
     + \log p_\omega-1
 $$
where $\alpha\equiv\omega [S]$ iff $\alpha_i=\omega_i$ for all $i\in S$.
The second derivatives are:
 $$
   \frac{\partial^2 \I^c}{\partial p_\omega^2} =
    -2\sum_{S\subset I}  \frac{c^I_S}{\sum_{\alpha\equiv\omega[S]} p_\alpha}
     + \frac1{p_\omega},
     \quad  \frac{\partial^2 \I^c}{\partial p_{\omega_0} \partial p_{\omega_1}} =
    -2\sum_{S\subset I}  \frac{c^I_S}{\sum_{\alpha\equiv\omega_0[S]} p_\alpha}
    \, \un{(\omega_0=\omega_1[S])},
 $$
for $\omega_0\ne\omega_1$.

Let $p=(p_\omega)_{\omega\in\Lambda_{d,N}}$ be a maximizer of $\I^c$.
We show that for each $\beta\in\{0,\dots,d-1\}^{N-z}$,
 $$
   \Omega_\beta:=\left\{(\alpha_1,\ldots,\alpha_z,\beta_1,\ldots,\beta_{N-z})\in
   \{0,\dots,d-1\}^N: \alpha\in\{0,\dots,d-1\}^z\right\}
 $$
must contain at least one configuration forbidden by $p$. The claim will follow since
the cardinality of $\{0,\dots,d-1\}^{N-z}$ is $d^N/d^z$.

We assume by
contradiction the existence of some $\beta\in\{0,\dots,d-1\}^{N-z}$ such that
no configuration in $\Omega_\beta$ is forbidden. Let $\omega_0\in\Omega_\beta$
be such that
 $$
   p_{\omega_0}:=\min\{p_{\omega}:\omega\in\Omega_\beta\}>0.
 $$
Let now $\omega_1\in\Omega_\beta\setminus\{\omega_0\}$, which exists since
$|\Omega_\beta|\geq d\geq 2$, so that $p_{\omega_1}\geq p_{\omega_0}>0$.
We set for $t\in\,]-\gep,\gep[$ and $0<\gep<p_{\omega_0}$
\[
p_\omega^t:=\left\{ \begin{array}{ll}
p_{\omega_1}+t, \ & \omega=\omega_1, \\
p_{\omega_0}-t, \ & \omega=\omega_0, \\
p_\omega, & \omega\notin\{\omega_0,\omega_1\}.
\end{array} \right.
\]
Then $p^t$ is still a probability measure for $t\in\,]-\gep,\gep[$,
since $p_{\omega_1}\geq p_{\omega_0}>\gep>0$.

Since $p$ is a maximizer, then $\varphi(t):=\I^c(p^t)\leq
\varphi(0):=\I^c(p)$ for $t\in\,]-\gep,\gep[$. Then
\[
\begin{split}
0& \geq \varphi''(0) =\frac{\partial^2 \I^c}{\partial p_{\omega_0}^2}
+ \frac{\partial^2 \I^c}{\partial p_{\omega_1}^2} - 2
\frac{\partial^2 \I^c}{\partial p_{\omega_0} \partial p_{\omega_1}}
\\ & = \frac1{p_{\omega_1}}+\frac1{p_{\omega_0}}-2\sum_{S\subset I}  \un{(\omega_0\ne\omega_1[S])}
\left[\frac{c^I_S}{\sum_{\alpha\in[\omega_0]_S} p_\alpha} + \frac{c^I_S}{\sum_{\alpha\in[\omega_1]_S} p_\alpha}
\right]
\end{split}
\]
where $[\omega]_S=\{\alpha: \alpha=\omega \mod [S]\}$ is the equivalence
class of $\omega$. Therefore
 $$
0 \geq \frac1{p_{\omega_1}}+\frac1{p_{\omega_0}}\left(1-
        2\sum_{S\subset I} \frac{c^I_S}{|[\omega_0]_S\cap\Omega_\beta|}-
        2\sum_{S\subset I} \frac{c^I_S}{|[\omega_1]_S\cap\Omega_\beta|}
    \right)
 $$
and for some $\omega\in\Omega_\beta$
 \begin{equation}\label{eq:12}
    \sum_{S\subset I} \frac{c^I_S}{|[\omega]_S\cap\Omega_\beta|} > \frac14.
 \end{equation}
On the other hand,
we have:
 $$
    |[\omega]_S\cap\Omega_\beta| = d^{|S^c\cap\{1,\dots,z\}|}
 $$
so that by Proposition \ref{1917}
the left hand side of \eqref{eq:12} is equal to:
\[
\begin{split}
\E\left( d^{-|\cZ^c\cap\{1,\dots,z\}|}\right) & = \int_{[0,1]}
\lambda_c(dx) \, \E\left( \prod_{i=1}^z d^{-\un{(Y_i<x)}}\right)
=\int_{[0,1]} \lambda_c(dx) \, \left(  \frac xd+(1-x)\right)^z
\\ & = \lambda_c(\{0\})+\int_{]0,1]}
\lambda_c(dx) \, \left(  \frac xd+(1-x)\right)^z.
\end{split}
\]
Since we have reduced above to the case $\lambda_c(\{0\})=0$, then the latter expression tends to $0$ as $z\to+\infty$, contradicting \eqref{eq:12}.
\end{proof}

\appendix

\section{Entropy}

In this Appendix, we recall needed facts from basic information
theory. The main object is the entropy functional which may be
said to quantify the randomness of a random variable.

Let $X$ be a random variable taking values in a finite space $E$.
We define the {\it entropy} of $X$
\[
\H(X) := -\sum_{x\in E} P_X(x) \, \log(P_X(x)), \qquad
P_X(x):=\bbP(X=x),
\]
where we adopt the convention
\[
0\cdot \log(0) = 0\cdot \log(+\infty)=0.
\]
We recall that
\begin{equation}\label{orly}
0\leq \H(X) \leq \log |E|,
\end{equation}
More precisely, $\H(X)$ is minimal iff $X$ is a constant,
it is maximal iff $X$ is uniform over $E$. To prove \eqref{orly}, just notice that
since $\varphi\geq 0$ and $\varphi(x)=0$ if and only if $x\in\{0,1\}$, and
by strict convexity of $x\mapsto \varphi(x)=x\log x$ and Jensen's inequality
\[
\begin{split}
\log |E|-\H(X) & = \frac1{|E|}\sum_{x\in E} P_X(x)\, |E| \left(\log(P_X(x))+ \log |E| \right)
\\ & = \frac1{|E|}\sum_{x\in E} \varphi\left(P_X(x)\, |E|\right) \geq \varphi
\left(\frac1{|E|}\sum_{x\in E} P_X(x)\, |E|\right) = \varphi(1)=0,
\end{split}
\]
with $\log |E|-\H(X)=0$ if and only if $P_X(x)\, |E|$ is constant in $x\in E$.

\smallskip
If we have a $E$-valued random variable $X$ and a $F$-valued
random variable $Y$ defined on the same probability space,
with $E$ and $F$ finite, we can consider the vector
$(X,Y)$ as a $E\times F$-valued random variable The entropy of $(X,Y)$ is
then
\[
\H(X,Y) := -\sum_{x,y} P_{(X,Y)}(x,y) \, \log(P_{(X,Y)}(x,y)), \quad
P_{(X,Y)}(x,y):=\bbP(X=x,Y=y).
\]
This entropy $\H(X,Y)$ is a measure of the extent to which the "randomness of the
two variables is shared". The following notions formalize this
idea.

\subsection{Condidional Entropy}
The {\it conditional entropy} of $X$ given $Y$ is:
\[
\H(X\, |\, Y) := \H(X,Y) - \H(Y).
\]
We claim that
\begin{equation}\label{coco}
0\leq \H(X\, |\, Y) \leq \H(X)\leq \H(X,Y).
\end{equation}
Remark that $P_X(x)$ and $P_Y(y)$, defined in the obvious way,
are the marginals of $P_{(X,Y)}(x,y)$, i.e.
\[
P_X(x) = \sum_y P_{(X,Y)}(x,y), \qquad
P_Y(y) = \sum_x P_{(X,Y)}(x,y).
\]
In particular, $P_X(x)\geq P_{(X,Y)}(x,y)$ for all $x,y$. Therefore
\[
\sum_{x,y} P_{(X,Y)}(x,y) \, \log\left(\frac{P_{(X,Y)}(x,y)}{P_X(x)}\right)
\leq 0
\]
which yields
\[
\H(X,Y)=- \sum_{x,y} P_{(X,Y)}(x,y) \, \log P_{(X,Y)}(x,y)
\geq -\sum_x  P_X(x) \, \log P_X(x) = \H(X),
\]
i.e. $\H(X,Y)\geq \H(X)$ and $\H(X|Y)\geq0$. Therefore
\begin{equation}\label{geq}
\H(X,Y)\geq \max \{\H(X),\H(Y)\}.
\end{equation}
Moreover $\H(X,Y)=\H(X)$, i.e. $\H(Y|X)=0$, if and only if
$P_{(X,Y)}(x,y)=P_X(x)$ whenever
$P_{(X,Y)}(x,y)\ne 0$, i.e. $Y$ is a function of $X$.
On the other hand,
 \begin{equation}\label{leq}
       \H(X,Y) \leq \H(X)+\H(Y)
 \end{equation}
with equality, i.e., $\H(Y|X)=\H(Y)$, if and only if $X$ and $Y$ are
independent. This shows that $\H(X\, |\, Y) \leq \H(X)$ and completes the proof of \eqref{coco}.
Formula \eqref{leq} can be shown by considering
the Kullback-Leibler divergence or relative entropy:
\[
I := \sum_{x,y} P_{(X,Y)}(x,y) \, \log\left(\frac{P_{(X,Y)}(x,y)}{P_X(x)\,P_Y(y)}\right).
\]
Since $\log(\cdot)$ is concave, by Jensen's inequality
\[
-I \leq \log \left( \sum_{x,y} P_{(X,Y)}(x,y) \,
\frac{P_X(x)\,P_Y(y)}{P_{(X,Y)}(x,y)}\right) =
\log \left( \sum_{x,y}P_X(x)\,P_Y(y)\right) = 0.
\]
By strict concavity, $I=0$ if and only if $P_{(X,Y)}(x,y)=P_X(x)\,P_Y(y)$
for all $x,y$, i.e., whenever $X$ and $Y$ are independent.

By the above considerations, $\H(X\, |\, Y)\in[0,\H(X)]$ is
a measure of the uncertainty associated with $X$ if
$Y$ is known. It is minimal iff $X$ is a function of $Y$ and
it maximal iff $X$ and $Y$ are independent.

\subsection{Adding information decreases uncertainty}
Let us consider three random variables $(X,Y,Z)\mapsto E\times F\times G$
with $E,F,G$ finite. Then we have that
\begin{equation}\label{birge}
\H(X \, | \, (Y,Z) )\leq \H(X\, |\, Y).
\end{equation}
Indeed, this is equivalent to
\[
\H(X,Y,Z)+\H(Y) \leq \H(X,Y)+\H(Y,Z).
\]
Consider the quantity
\[
J := \sum_{x,y,z} P_{(X,Y,Z)}(x,y,z) \,
\log\left(\frac{P_{(X,Y,Z)}(x,y,z)\, P_Y(y)}{P_{(X,Y)}(x,y) \, P_{(Y,Z)}(y,z)}\right).
\]
Since $-\log(\cdot)$ is convex, by Jensen's inequality
\[
J \geq -\log \left( \sum_{x,y} \frac{P_{(X,Y)}(x,y) \, \sum_z P_{(Y,Z)}(y,z)}{P_Y(y)}
\right) = -\log \left( \sum_{x,y}P_{(X,Y)}(x,y)\right) = 0,
\]
and the inequality follows.

\subsection{Mutual Information}\label{mi}
Finally, we recall the notion of
{\it mutual information} between two random variables $X$ and $Y$
defined on the same probability space:
\[
\begin{split}
\MI(X,Y) & := \H(X)+\H(Y)-\H(X,Y)
\\ &  = \H(X)-\H(X\, |\, Y) = \H(Y)-\H(Y\, |\, X)
\\ & = \sum_{x,y} P_{(X,Y)}(x,y) \, \log\left(\frac{P_{(X,Y)}(x,y)}{P_X(x)\,P_Y(y)}\right).
\end{split}
\]
This quantity is a measure of the common randomness of $X$ and $Y$.
By \eqref{geq} and \eqref{leq} we have
$\MI(X,Y)\in[0,\min\{\H(X),\H(Y)\}]$. $\MI(X,Y)$ is
minimal (zero) iff $X,Y$ are independent and maximal, i.e. equal
to $\min\{\H(X),\H(Y)\}$, iff one variable is a function of
the other.

Mutual information is non-decreasing. Let $X,X',Y,Y',\hat X,\hat Y$
be random variables such that $X,X'$, resp. $Y,Y'$, are
(deterministic) functions of $\hat X$, resp. $\hat Y$. Then:
 \begin{equation}\label{eq:MI-mono}
    \MI(X,Y)\leq \MI(\hat X,\hat Y).
 \end{equation}
Mutual information is almost additive:
 \begin{equation}\label{eq:MI-add}
   \left| \MI((X,Y),(X',Y')) - (\MI(X,X')+\MI(Y,Y')) \right|
     \leq \MI(\hat X,\hat Y).
 \end{equation}
These properties follow from the properties of conditional entropy. First,
 \begin{multline*}
   \MI(\hat X,\hat Y)=\H(\hat X)+\H(\hat Y)-\H(\hat X,\hat Y) \\
    = \H(X)+\H(\hat X|X)+\H(Y)+\H(\hat Y|Y)
      -\H(X,Y)-\H(\hat X|X,Y)-\H(\hat Y|\hat X,Y) \\
    =\MI(X,Y)
     + (\H(\hat X|X)-\H(\hat X|X,Y))
     + (\H(\hat Y|Y)-\H(\hat Y|\hat X,Y)).
 \end{multline*}
\eqref{eq:MI-mono} now follows from \eqref{birge}.
Second,
 \begin{multline*}
   \MI((X,Y),(X',Y'))=\H(X,Y)+\H(X',Y')-\H(X,X',Y,Y') \\
    = \H(X)+\H(Y)-\MI(X,Y)+\H(X')+\H(Y')-\MI(X',Y')\\
     \qquad -\H(X,X')-\H(Y,Y')+\MI((X,X'),(Y,Y')) \\
    =\H(X)+\H(X')-\H(X,X')+\H(Y)+\H(Y')-\H(Y,Y') \\
      \qquad +(\MI((X,X'),(Y,Y'))-\MI(X,Y)-\MI(X',Y')) \\
    =\MI(X,X')+\MI(Y,Y')
      + (\MI((X,X'),(Y,Y'))-\MI(X,Y)-\MI(X',Y')).
 \end{multline*}
The nonnegativity of mutual information and \eqref{eq:MI-mono}
yields
 \begin{multline*}
     -\min(\MI(X,Y),\MI(X',Y'))\leq
        \MI((X,Y),(X',Y')) - (\MI(X,X')+\MI(Y,Y')) \\
     \leq \MI((X,X'),(Y,Y')).
 \end{multline*}
\eqref{eq:MI-add} follows.

\end{document}